\theoremstyle{plain}
\newtheorem{thm}{Theorem}[section]
\newtheorem{lem}[thm]{Lemma}
\newtheorem{prop}[thm]{Proposition}
\newtheorem{cor}[thm]{Corollary}
\newtheorem{rem}[thm]{Remark}
\newtheorem{defn}[thm]{Definition}
\begin{document}
\title{Chaos and Entropy for Interval Maps }
\author[J. Li]{Jian Li}
\date{\today}
\address{Department of Mathematics, University of Science and Technology of China,
Hefei, Anhui, 230026, P.R. China}
\email{lijian09@mail.ustc.edu.cn}
\begin{abstract}
In this paper,  various chaotic properties and their relationships
for interval maps are discussed. It is shown that the proximal
relation is an equivalence relation for any zero entropy interval
map. The structure of the set of $f$-nonseparable pairs is well
demonstrated and so is its relationship to Li-Yorke chaos. For a
zero entropy interval map, it is shown that a pair is a sequence
entropy pair if and only if it is $f$-nonseparable.

Moreover, some equivalent conditions of positive entropy which
relate to the number ``$3$" are obtained. It is shown that for an interval map if it is topological null,
then the pattern entropy
of every open cover is of polynomial order, answering a question by
Huang and Ye when the space is the closed unit interval.
\end{abstract}
\keywords{chaos, entropy, interval map.}
\subjclass[2000]{37E05, 37B40, 54H20.}
\maketitle

\section{Introduction}
The study of the complexity or chaotic behavior is a central topic
in topological dynamics. Starting from the work of Li and York
\cite{LY75} various authors introduce a lot of definitions of chaos
according to their understanding of the phenomena. Among them,
Li-Yorke chaos, Denavey chaos \cite{D89} and positive entropy
\cite{Block1992} are popular ones. It is important to understand
their relationships. Recently, it has been shown that for a general
topological dynamical system, Devaney chaos implies Li-Yorke chaos
\cite{HY02} and positive entropy implies Li-Yorke chaos
\cite{BGKM02}.

\medskip

In the study of the so called ``local entropy theory" (for a survey
see \cite{GY09}), a lots of notions are introduced to describe
dynamical properties. It is not clear the relationship of those
properties (related to entropy) with the chaotic behaviors for a
given space. The purpose of the current paper is to study the
relationship in the case when the given space is a closed interval.
We believe that many results of the paper hold for a graph map even
more general spaces.

\medskip

To state our results, we introduce some notations first. Let $I$ be
the closed unit interval $[0,1]$ and $C(I,I)$ denote the class of
continuous maps of $I$ to itself. For $f\in C(I,I)$, let $f^0$ be
the identity, and for $n\in \mathbb N$, let $f^{n+1}=f^n\circ f$,
where $\mathbb{N}$ stands for the set of positive integers.

\medskip

A point $x\in I$ is called a {\it periodic point} of $f$ with {\it
period} $n$ if $f^n(x) = x$, $f^k(x)\neq x$ for $1\leq k<n$.  A
periodic point with period $1$ is called a {\it fixed point}. The
{\em $\omega$-limit set of $x$}, denoted by $\omega_f(x)$, is the set of
limit points of $\{f^i(x)\}_{i=0}^\infty$. A set $W\subset I$ is
called some {\em $\omega$-limit set} for $f$, if there exists an $x\in I$
such that $W=\omega_f(x)$. Denote the collection of all
$\omega$-limit sets for $f$ by $\omega_f$.

\medskip

A point $x\in X$ is called (1) a {\em recurrent point}, if for every neighborhood $U$ of $x$, there exists some
$n>0$, such that  $f^n(x)\in U$; (2) a {\em strongly recurrent point}, if for every neighborhood $U$ of $x$,
there exists some $N>0$, such that if $f^m(x)\in U$ then $f^{m+k}(x)\in U$ for some $k$ with $0<k\leq N$;
(3) a {\em regularly recurrent point}, if for every neighborhood $U$ of $x$, there exists some
$N>0$, such that $f^{kN}(x)\in U$ for all $k>0$.

Denote $Per(f)$, $Rec(f)$, $SR(f)$ and $RR(f)$ by the set of periodic points, recurrent points,
strongly recurrent points and regularly recurrent points,
respectively. It is well known that $$ Per(f)\subset RR(f)\subset
SR(f)\subset Rec(f).$$

The terminology ``chaos'' was first introduced by Li and Yorke
\cite{LY75}  to describe the complex behavior of trajectories. A
pair $\langle x,y\rangle\in I^2$ is called {\em proximal} if
$\liminf_{n\to \infty}|f^n(x)-f^n(y)|=0$ and is called {\em asymptotic} if
$\lim_{n\to \infty}|f^n(x)-f^n(y)|=0$. A {\em scrambled pair} or {\em Li-Yorke pair} is one that
is proximal but not asymptotic. A pair $\langle x,y\rangle$ is called {\em proper} if $x\neq y$.

For $\delta>0$, a pair $\langle x,y\rangle$ is said to be
{\em $\delta$-scrambled}  if
$$\liminf_{n\to \infty} |f^n(x)-f^n(y)|=0 \text{ and } \limsup_{n\to \infty}|f^n(x)-f^n(y)|\geq \delta.$$
A set $C\subset I$ is called {\em scrambled} (resp. {\em $\delta$-scrambled}) if
any proper pair $\langle x,y\rangle \in C^2$  is scrambled (resp.
$\delta$-scrambled). The map $f$ is called {\em Li-Yorke chaotic} (resp.
{\em $\delta$-Li-Yorke chaotic}) if there exists an uncountable scrambled
set (resp. $\delta$-scrambled set).

In \cite{LY75}, Li and Yorke proved that for an interval map period $3$ implies Li-Yorke chaos. In \cite{JS86}, Jankova and Smital generalized this result as follows: if an interval map has positive entropy, then it is Li-Yorke chaotic.

\medskip
The converse of this result is not true: Xiong \cite{X86} and Smital
\cite{S86} constructed  some interval maps with zero entropy which are
Li-Yorke chaotic.

\medskip
In \cite{S86}, Smital also built some useful tools for zero entropy
interval maps: the periodic portion of an $\omega$-limit set and
$f$-nonseparable points. See \cite{Block1992} for another approach
to the periodic portion of an $\omega$-limit set.
\medskip

In this paper, we discuss those various chaotic properties and their
relationships for  interval maps. In section 2, for preparation we
recall some basic definitions and results for a general dynamical system.
In section 3, we review the structure of the $\omega$-limit set and
build a new approach to the periodic portion of an $\omega$-limit
set.

In section 4,  we deal with zero entropy interval maps. First,
we show that  the proximal relation is an equivalence relation for a zero entropy
interval map. Second, some properties of $f$-nonseparable pairs are
obtained. Third, we discus the relationship between Li-Yorke chaos
and $f$-nonseparable pair. Finally, after
reviewing some recent results on the sequence entropy pair, we show
that for a zero entropy interval map a pair is a sequence entropy
pair if and only if it is $f$-nonseparable.

In section 5, we obtain some
equivalent conditions of positive entropy which relate to the number
``$3$" and show that strongly mixing is equivalent to topological K
for interval maps. In section 6, we show that for an interval map if
it is null, then the pattern entropy of every open cover is of
polynomial order, which give an positive answer for a problem in
\cite{HY09} for interval maps.

\section{Preliminaries}
In this section we briefly review some basic definitions and results for a general dynamical system.
By a {\em topological dynamical system} (TDS for short), we mean a pair $(X, T)$,
where $X$ is a  compact metric space with metric $d$ and $T:X \to X$
is a continuous map.

\begin{defn}
Let $(X, T)$ be a TDS. The system $(X,T)$ (or the map $T$) is called
\begin{enumerate}
\item {\em transitive} if for every two nonempty open subsets $U, V$ of $X$, there exists some $n\geq 0$
such that $T^n U \cap V\neq \emptyset$.
\item {\em weakly mixing} if $T\times T$ is transitive on $X\times X$.
\item {\em strongly mixing} if for every two nonempty open subsets $U, V$ of $X$,
there exists some $N\geq 0$ such that $T^n U \cap V\neq \emptyset$ for all $n\geq N$.
\item {\em minimal} if there is no non-trivial subsystem.
\end{enumerate}
\end{defn}

\begin{defn}
Let  $(X, T)$ be a TDS. The system $(X,T)$ (or the map $T$) is called
\begin{enumerate}
\item {\em sensitive on initial conditions} (or just {\em sensitive}) if there exists $\varepsilon >0$
such that for every nonempty open subset $U$ of $X$, there are $x, y\in U$ and $n\geq 0$ such that $d(T^n(x), T^n(y))\geq \varepsilon$.
\item {\em Devaney chaotic}, if it is transitive and the set of periodic points is dense in $X$.
\end{enumerate}
\end{defn}

In 1965, Adler, Konheim and McAndrew introduced topological entropy for a
TDS. Let $\mathcal C_X^o$ be  the set of finite open covers of $X$.
Given two open covers $\mathcal U$ and $\mathcal V$, let
$$\mathcal U \vee \mathcal V =\{U\cap V: U\in \mathcal U, V\in \mathcal V\}.$$
We define $N(\mathcal U)$ as the minimum cardinality of subcovers of
$\mathcal U$.  The topological entropy of $T$ with respect to
$\mathcal U$ is
$$h(T,\mathcal U)=\lim_{n\to\infty}\frac{1}{n}\log N\left(\bigvee_{i=0}^{n-1}T^{-i}\mathcal U\right).$$
The {\em topological entropy} of $T$ is
$$h(T)=\sup_{\mathcal U\in \mathcal C_X^o}h(T,\mathcal U).$$

In 1974, Goodman introduced sequence topological entropy. 
For an $A=\{0\leq t_1<t_2<\cdots \}\subset \mathbb{Z}_+$ and an open
cover $\mathcal U$ of $X$,  the topological sequence entropy of $T$
with respect to $\mathcal U$ and $A$ is
$$h_A(T,\mathcal U)=\lim_{n\to\infty}\frac{1}{n}\log N\left(\bigvee_{i=0}^{n-1}T^{-t_i}\mathcal U\right).$$
The {\em topological sequence entropy} of $(X,T)$ along $A$ is
$$h_A(T)=\sup_{\mathcal U\in \mathcal C_X^o}h_A(T,\mathcal U).$$

Recently, local entropy theory has aroused great interesting, see
\cite{GY09} for a survey on this topic. The notions of
entropy tuple and sequence entropy tuple of length $n$ were defined
in paper \cite{HY06}. Originally, entropy tuple and sequence entropy
tuple are defined using open covers, now we state the equivalence
definition using the notion of independence set (see \cite{HY06, KL07}).
Recall that a TDS $(X, T)$ is {\em tame} if the cardinal
number of its enveloping semigroup is not greater than the cardinal
number of $\mathbb R$ \cite{G06}, and a TDS $(X,T)$ is {\em null} if
$\sup_A h_A(T)=0$.

\begin{defn}
Let $(X, T)$ be a TDS. For a tuple $\tilde{A}=(A_1,\ldots,A_k)$ of subsets of
$X$, a subset $J \subset \mathbb{Z}_+$  is called an {\em independence
set} for  $\tilde{A}$ if for every nonempty finite subset $I
\subset J$, we have
$$\bigcap_{i\in I} T^{-i}A_{s(i)}\neq \emptyset$$
for all $s\in \{1,\ldots, k\}^I$.
\end{defn}

In \cite{KL07}, Kerr and Li defined IE-tuple, IT-tuple and IN-tuple
(standing for entropy, tame and null, respectively) as follows.

\begin{defn}
A tuple $\tilde{x}=\langle x_1, \ldots, x_k\rangle \in X^k$ is called
\begin{enumerate}
\item an {\em IE-tuple} if for every product neighborhood $U_1 \times\cdots\times U_k$ of $\tilde{x}$
the tuple $(U_1,\ldots, U_k)$ has an independence set of positive density.
\item an {\em IT-tuple} if for every product neighborhood $U_1 \times\cdots\times U_k$  of  $\tilde{x}$
the tuple $(U_1,\ldots, U_k)$ has an infinite independence set.
\item an {\em IN-tuple} if for every product neighborhood $U_1 \times\cdots\times U_k$ of $\tilde{x}$
the tuple $(U_1,\ldots, U_k)$  has arbitrarily long finite independence sets.
\end{enumerate}
\end{defn}

Recall that a tuple $\langle x_1, \ldots, x_k\rangle \in X^k$ is
said to be {\em essential} if $x_i\neq x_j$  for all $1\leq i<j\leq k$ and
{\em non-diagonal} if there are $1\leq i<j\leq k$ such that $x_i\neq x_j$.
\begin{defn}
Let $(X, T)$ be a TDS. The system $(X,T)$ is called
\begin{enumerate}
\item {\em uniformly positive entropy} if every essential pair $\langle x_1,  x_2\rangle \in X^2$ is an IE-tuple.
\item {\em topological K} if every essential $k$-tuple $\langle x_1, \ldots, x_k\rangle \in X^k$
is an IE-tuple for all $k\geq 2$.
\end{enumerate}
\end{defn}

\begin{thm} \cite{HY06,KL07} Let $(X, T)$ be a TDS.
\begin{enumerate}
\item A tuple is an entropy tuple iff it is a non-diagonal IE-tuple. In particular
the system $(X, T)$ has zero entropy iff every IE-pair is diagonal.
\item A tuple is a sequence entropy tuple iff it is a non-diagonal IN-tuple.
In particular the system  $(X, T)$ is null iff every IN-pair is diagonal.
\item  The system $(X, T)$ is tame iff every IT-pair is diagonal.
\end{enumerate}
\end{thm}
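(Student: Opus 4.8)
The plan is to reduce all three equivalences to statements about \emph{combinatorial independence density}, since this is the common thread linking the cover-based definitions of entropy, sequence entropy, and tame tuples to the independence-set definitions of IE-, IN-, and IT-tuples. For a tuple $\tilde A=(A_1,\ldots,A_k)$ and $n\in\mathbb N$, I write $\varphi_{\tilde A}(n)$ for the largest cardinality of an independence set for $\tilde A$ contained in $\{0,1,\ldots,n-1\}$. The first thing I would establish is the dictionary: positive topological entropy of the cover attached to pairwise disjoint neighborhoods corresponds to a positive lower bound for $\varphi_{\tilde A}(n)/n$, positive sequence entropy along some sequence corresponds to $\varphi_{\tilde A}(n)\to\infty$ (arbitrarily long but possibly sparse independence), and failure of tameness corresponds to an infinite independence set. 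With this dictionary in hand, each part becomes a combinatorial extraction problem.

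For part (1), I would first dispose of the easy direction. If $\tilde x=\langle x_1,\ldots,x_k\rangle$ is a non-diagonal IE-tuple, then choosing pairwise disjoint closed neighborhoods $U_i$ of its distinct coordinates gives, by definition, an independence set $J$ of positive density for $(U_1,\ldots,U_k)$; a direct counting argument then shows that the cover $\mathcal U=\{U_1^c,\ldots,U_k^c\}$ satisfies $N\bigl(\bigvee_{i<n}T^{-i}\mathcal U\bigr)\ge\bigl(\tfrac{k}{k-1}\bigr)^{|J\cap[0,n)|}$, and since $|J\cap[0,n)|$ grows linearly this forces $h(T,\mathcal U)>0$, so $\tilde x$ is an entropy tuple. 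The substantial direction is the converse: from $h(T,\mathcal U)>0$ I would pass, through the variational principle, to a $T$-invariant measure exhibiting positive measure-theoretic independence for the corresponding sets, and then invoke the \emph{density Sauer--Shelah lemma} (in the $k$-symbol Karpovsky--Milman form used by Kerr and Li) to upgrade exponential pattern counting into an independence set of positive density, yielding the IE-tuple. The ``in particular'' clause is immediate, since zero entropy is exactly the vanishing of every $h(T,\mathcal U)$.

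For part (2) the argument runs in parallel but weakens ``positive density'' to ``arbitrarily long''. A non-diagonal IN-tuple produces, for each product neighborhood, independence sets of every finite length; choosing a sequence $A=\{t_i\}$ enumerating such sets makes the subcover counts of $\bigvee_i T^{-t_i}\mathcal U$ grow unboundedly, hence $h_A(T,\mathcal U)>0$ and $\tilde x$ is a sequence entropy tuple. Conversely, positive $h_A(T,\mathcal U)$ forces, again by the combinatorial lemma but now requiring only unbounded rather than positive-density independence, arbitrarily long finite independence sets. For part (3) I would instead appeal to the Rosenthal $\ell^1$-dichotomy together with the Bourgain--Fremlin--Talagrand theory: non-tameness is equivalent to the presence of an $\ell^1$-sequence among the coordinate functions determined by a product neighborhood, and such a sequence corresponds precisely to an \emph{infinite} independence set, that is, a non-diagonal IT-tuple.

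The main obstacle is the forward direction of part (1): converting the analytic inequality $h(T,\mathcal U)>0$ into the combinatorial assertion that $(U_1,\ldots,U_k)$ admits an independence set of \emph{positive density}. This needs both the variational bridge (to produce a measure whose independence is quantitatively controlled) and the density strengthening of Sauer--Shelah for $k$ symbols, whose proof rests on a sharp entropy estimate for partial binomial sums. By comparison the sequence-entropy case is softer, since only unbounded cardinality is required, and the tame case is essentially a black-box application of Rosenthal's dichotomy; so the genuine work is concentrated in the density combinatorics underlying the IE-tuple characterization.
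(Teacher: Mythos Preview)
The paper does not provide its own proof of this theorem: it is stated with the citations \cite{HY06,KL07} and no proof environment follows. The result is quoted from Huang--Ye and Kerr--Li as background for the local entropy theory used later, so there is nothing in the paper to compare your proposal against.

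That said, your sketch is essentially the strategy carried out in Kerr--Li \cite{KL07}: the dictionary you set up between cover-based entropy quantities and the growth of independence sets is exactly their framework, the density Sauer--Shelah/Karpovsky--Milman lemma is indeed the combinatorial engine for the hard direction of part~(1), and the Rosenthal $\ell^1$ dichotomy is the right tool for part~(3). One small correction: in your easy direction of (1), the inequality $N\bigl(\bigvee_{i<n}T^{-i}\mathcal U\bigr)\ge\bigl(\tfrac{k}{k-1}\bigr)^{|J\cap[0,n)|}$ is not quite the sharp bound one gets from an independence set (an independence set of size $m$ gives $k^m$ pairwise distinct intersection patterns, hence $N\ge k^m/(k-1)^{?}$ needs a more careful pigeonhole), but the spirit is right and the exponential lower bound does follow. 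Your proposal would be appropriate as an outline of the original proofs, not as a substitute for anything in this paper.
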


\section{The structure of $\omega$-limit sets}

We first recall some classical results on the structure of $\omega$-limit sets for interval maps.
The following result is well known, see \cite{Block1992} for example.

\begin{thm}\label{interval-zero-entropy}
Let $f\in C(I,I)$. The following conditions are equivalent:
\begin{enumerate}
\item $h(f)=0$;
\item the period of every periodic point is a power of $2$;
\item every $\omega$-limit set can not properly contain a periodic orbit.
\end{enumerate}
\end{thm}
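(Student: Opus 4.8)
The plan is to prove the three equivalences by working with the negations. Write (Q1), (Q2), (Q3) for the failures of (1), (2), (3): so (Q1) is $h(f)>0$, (Q2) is the existence of a periodic point whose period is not a power of $2$, and (Q3) is the existence of an $\omega$-limit set that properly contains a periodic orbit. I would establish (Q1) $\Leftrightarrow$ (Q2) $\Leftrightarrow$ (Q3). Two classical engines drive the argument: Sharkovskii's theorem, which says that the set of periods of $f$ is an initial segment of the Sharkovskii order whose final block is exactly the powers of $2$; and Misiurewicz's horseshoe characterization of entropy, namely that $h(f)>0$ if and only if some iterate $f^N$ has a \emph{horseshoe}, i.e.\ there are disjoint closed subintervals $J_0,J_1$ with $f^N(J_0)\supseteq J_0\cup J_1$ and $f^N(J_1)\supseteq J_0\cup J_1$.

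For (Q1) $\Leftrightarrow$ (Q2) I would argue both directions. To see (Q2) $\Rightarrow$ (Q1), suppose $f$ has a periodic point of period $m=2^{k}q$ with $q\ge 3$ odd; then $g=f^{2^{k}}$ has a periodic point of odd period $q\ge 3$, and the standard Stefan-orbit analysis produces for $g$ a cycle of intervals in which one interval $g$-covers two disjoint intervals, so some iterate of $g$ has a horseshoe. Hence $h(g)>0$ and $h(f)=2^{-k}h(g)>0$. For (Q1) $\Rightarrow$ (Q2), if $h(f)>0$ then by Misiurewicz some $f^N$ has a horseshoe, which forces $f^N$ to have periodic points of every period by the usual covering/intermediate-value argument; choosing an odd period $q\ge 3$ for $f^N$ gives an $f$-periodic point whose $f$-period $p$ cannot be a power of $2$, since otherwise its $f^N$-period $q=p/\gcd(p,N)$ would itself be a power of $2$. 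This yields (Q1) $\Leftrightarrow$ (Q2).

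It remains to close the loop with (Q2) $\Rightarrow$ (Q3) and (Q3) $\Rightarrow$ (Q1). For (Q2) $\Rightarrow$ (Q3), by the equivalence just proved we have $h(f)>0$, so some $f^N$ has a horseshoe on $J_0,J_1$; inside the associated invariant set one can select a point $x$ whose $f^N$-orbit accumulates both on a fixed point $z$ of $f^N$ and on points off the $f$-orbit of $z$ (prescribe an itinerary in the full shift that visits the cell of $z$ infinitely often yet is not eventually constant). Then $\omega_f(x)\supseteq\omega_{f^N}(x)$ is closed and $f$-invariant, contains the whole ($f$-periodic) orbit of $z$, and contains a point outside it, so it properly contains a periodic orbit. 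For (Q3) $\Rightarrow$ (Q1), suppose $W=\omega_f(x)$ properly contains a periodic orbit $P$ of period $p$; passing to $g=f^{p}$, which fixes each point of $P$, the orbit of $x$ accumulates on a fixed point $z\in P$ and simultaneously on some $w\in W\setminus P$, and the linear order of $I$ together with continuity forces the $g$-orbit to return near $z$ from a definite distance infinitely often, producing a horseshoe-type covering relation for an iterate of $g$ and hence $h(f)>0$.

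The delicate step is this last implication (Q3) $\Rightarrow$ (Q1): converting the qualitative fact that ``the orbit accumulates both on $P$ and off $P$'' into an honest horseshoe is exactly where the one-dimensional analysis is needed. I expect to rely on the fine structure theory of $\omega$-limit sets of interval maps rather than build the horseshoe by hand, namely the classification for zero-entropy maps into finite (periodic) orbits and solenoidal, odometer-like Cantor sets that are disjoint from $Per(f)$; equivalently, for a zero-entropy map every $\omega$-limit set meeting the periodic points is a single periodic orbit. This classification, available in \cite{Block1992}, is the main obstacle and the natural tool for the final direction.
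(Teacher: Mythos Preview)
The paper does not actually prove this theorem; it simply records it as ``well known'' and points to \cite{Block1992}. So your sketch is not competing against an argument in the paper but rather filling in what the paper leaves to the reference.

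Your outline is sound and is essentially the standard route taken in the literature. The equivalence (Q1)$\Leftrightarrow$(Q2) via Sharkovskii's ordering together with Misiurewicz's horseshoe criterion is the classical core; your derivation of (Q2)$\Rightarrow$(Q3) by prescribing an itinerary in the horseshoe shift that accumulates on a fixed point and elsewhere is also the usual construction. Your caution about (Q3)$\Rightarrow$(Q1) is appropriate, and your proposed resolution---invoke the zero-entropy structure theory of $\omega$-limit sets (what appears here as Proposition~\ref{interval-periodic-portion-2} and Theorem~\ref{interval-structure-of-limit-set})---is exactly how \cite{Block1992} handles it. There is no circularity: that structure theory is developed under the hypothesis that every period is a power of $2$, i.e.\ condition (2), which you have already shown equivalent to (1); it does not presuppose (3). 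One small point worth making explicit in your write-up is that a \emph{finite} $\omega$-limit set is always a single periodic orbit, so cannot properly contain one; combined with the infinite case (where the structure theory gives $\omega_f(x)\cap Per(f)=\emptyset$), this closes the contrapositive cleanly.
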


The following result first appeared in \cite{Sh66}, see also in \cite{BBHS96}.
\begin{thm}\label{interval-sequence-limit-set}
Let $f\in C(I,I)$.
\begin{enumerate}
\item If $\omega_1$ and $\omega_2$ are two $\omega$-limit sets and $a\in \omega_1\cap \omega_2$ is
a limit point from the left (resp., from the right) of both  $\omega_1$ and $\omega_2$,
then $\omega_1\cup \omega_2$ is also an $\omega$-limit set of $f$.
\item If $\omega_1\subset \omega_2\subset \cdots$ is a sequence of $\omega$-limit sets,
then $\overline{\bigcup_{i=1}^\infty \omega_i}$ is also an $\omega$-limit set of $f$.
\end{enumerate}
\end{thm}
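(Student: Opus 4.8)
The plan is to deduce both statements from a single construction: given the data, build one point $z\in I$ whose forward orbit accumulates on exactly the prescribed set. The basic tool is the itinerary (covering) lemma for interval maps: if $J_0,J_1,J_2,\dots$ are nonempty closed subintervals of $I$ with $f(J_n)\supseteq J_{n+1}$ for every $n$, then there is a point $z$ with $f^n(z)\in J_n$ for all $n$. Indeed, given the covering condition a standard induction using the intermediate value theorem shows that each $K_n=\{x\in J_0: f^i(x)\in J_i,\ 0\le i\le n\}$ is a nonempty closed interval; as they are nested and compact, $\bigcap_n K_n\ne\emptyset$, and any $z$ in this intersection follows the itinerary. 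Thus it suffices to prescribe a sequence of target intervals obeying the covering condition and shrinking onto the desired accumulation set. I will use repeatedly that $\omega$-limit sets are closed and $f$-invariant and that a tail $f^m(x),f^{m+1}(x),\dots$ of the orbit of $x$ is, for $m$ large, both $\varepsilon$-close to and (over a suitable finite segment) $\varepsilon$-dense in $\omega_f(x)$.

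For part (1), fix $x_1,x_2$ with $\omega_f(x_i)=\omega_i$ and let $a\in\omega_1\cap\omega_2$ be a limit point from the left of both. I construct $z$ in infinitely many stages with tolerances $\varepsilon_k\to0$. In the $k$-th stage the itinerary first follows, within $\varepsilon_k$, a finite orbit segment of $x_1$ chosen to be $\varepsilon_k$-dense in $\omega_1$, then bridges to a finite orbit segment of $x_2$ that is $\varepsilon_k$-dense in $\omega_2$, then bridges back. Along a genuine orbit segment the covering condition is arranged forwards: having fixed $J_n$ about an orbit point $p$, the interval $f(J_n)$ contains $f(p)$, so a sufficiently short interval about $f(p)$ serves as $J_{n+1}\subseteq f(J_n)$. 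The one-sidedness hypothesis is used exactly at the bridges: because both $\omega_1$ and $\omega_2$ accumulate at $a$ from the same (left) side, one may work inside a single one-sided neighborhood $(a-\delta,a)$ in which target points of both orbits lie arbitrarily close to $a$, and insert there a short bridging chain of intervals realizing the covering condition from the $\omega_1$-phase to the $\omega_2$-phase. Finally one checks $\omega_f(z)=\omega_1\cup\omega_2$: the $\varepsilon_k$-dense shadowing forces $\omega_1\cup\omega_2\subseteq\omega_f(z)$, while every late orbit point of $z$ lies within $\varepsilon_k$ of $\omega_1\cup\omega_2$ (the bridging points are near $a\in\omega_1\cap\omega_2$), giving the reverse inclusion.

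For part (2) fix $x_k$ with $\omega_f(x_k)=\omega_k$ and put $W=\overline{\bigcup_k\omega_k}$. The same staged construction applies, the $k$-th stage shadowing within $\varepsilon_k\to0$ an orbit segment of $x_k$ that is $\varepsilon_k$-dense in $\omega_k$; here the inclusions $\omega_k\subseteq\omega_{k+1}$ supply common target points near which consecutive stages can be bridged. One obtains $\omega_k\subseteq\omega_f(z)$ for every $k$, hence $W=\overline{\bigcup_k\omega_k}\subseteq\omega_f(z)$ by closedness, and the shrinking tolerances give $\omega_f(z)\subseteq W$. Alternatively, one may read part (2) as a Hausdorff-limit statement: the nested sets $\omega_k$ converge in the Hausdorff metric to $W$, so it would follow at once from the (separately provable) fact that the family of $\omega$-limit sets of $f$ is closed in the Hausdorff metric.

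The main obstacle in both parts is the bridge: securing the covering condition $f(J_n)\supseteq J_{n+1}$ while switching from shadowing one orbit to shadowing another and simultaneously keeping the intervals shrinking onto the target set. Since $f$ need not expand, this cannot be read off from continuity alone; it must be extracted from the interval structure (images of intervals are intervals, together with the intermediate value theorem) and, in part (1), from the one-sided approach to $a$, which is precisely the hypothesis that makes a common bridging interval available. Pinning down this bridging step, and confirming that the two-sided case genuinely fails, is the delicate heart of the argument.
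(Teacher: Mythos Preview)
The paper does not give its own proof of this theorem; it is quoted as a classical result of Sharkovsky, with a further reference to Block--Bruckner--Humer--Sm\'{\i}tal, and is used without argument. So there is no in-paper proof to compare against, only the cited literature.

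Your high-level plan --- build a single orbit that alternately shadows the given orbits and realize it via the covering/itinerary lemma --- is indeed the strategy used in those references, but as written the proposal has a genuine gap, and you essentially say so yourself in the last paragraph. Two concrete problems. First, the forward-shadowing step is not as innocent as presented: ``$f(J_n)$ contains $f(p)$, so a sufficiently short interval about $f(p)$ serves as $J_{n+1}\subseteq f(J_n)$'' gives no lower bound on $|J_{n+1}|$, and after finitely many steps the $J_n$ may have collapsed to points, leaving no room for a bridge. The construction has to run the other way --- fix the bridge interval first and pull it back --- so that the size of the interval at the switching time is under your control. Second, the bridge itself is where the one-sidedness does real work, and ``insert there a short bridging chain of intervals realizing the covering condition'' is not an argument. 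Having $f^{n_1}(x_1)$ and $f^{m_1}(x_2)$ both in $(a-\delta,a)$ does nothing by itself: there is no a priori reason the image of a small interval around one should cover a small interval around the other. What is actually needed is a lemma of roughly the following shape: if $a$ is a left limit point of $\omega_f(x)$, then for every $\delta>0$ there are $b\in(a-\delta,a)\cap\omega_f(x)$ and $n\ge1$ with $f^n([b,a])$ containing a prescribed left neighborhood of $a$. Establishing such a statement (via the order structure of $I$ and recurrence inside $\omega$-limit sets) is precisely the content you defer. A minor correction: ``the two-sided case genuinely fails'' is misstated --- if $a$ is a limit point of both $\omega_i$ from both sides the conclusion holds a fortiori; what can fail is the \emph{mixed} case where $a$ is a left limit of $\omega_1$ but only a right limit of $\omega_2$. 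Finally, your alternative for part~(2) via Hausdorff closedness of the family of $\omega$-limit sets is correct in spirit, but that closedness is itself the main theorem of the Block--Bruckner--Humer--Sm\'{\i}tal paper and is no easier than what you are trying to prove.
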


On the basis of the above result and Zorn's Lemma we have:
\begin{prop}
Let $f\in C(I,I)$ and $\omega_f$ partially ordered by the inclusion relation.
Then each maximal chain in  $\omega_f$ has a maximal element.
\end{prop}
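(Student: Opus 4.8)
The plan is to show that a maximal chain $\mathcal{C}$ in $\omega_f$ actually possesses a greatest element, which is then automatically its unique maximal element. The natural candidate is the closure of the union, $W=\overline{\bigcup_{\omega\in\mathcal{C}}\omega}$, and the strategy is to prove that $W$ is itself an $\omega$-limit set and that it already belongs to $\mathcal{C}$. Note that every member of $\mathcal{C}$, and hence $W$, is a closed (indeed compact) subset of $I$.

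First I would reduce the possibly uncountable chain to a sequence, using that $I$ is second countable. Fix a countable base $\{B_n\}$ for the topology of $I$ and write $U=\bigcup_{\omega\in\mathcal{C}}\omega$. For each index $n$ with $B_n\cap U\neq\emptyset$, select one member $\omega_n\in\mathcal{C}$ with $\omega_n\cap B_n\neq\emptyset$. A short argument with the base then shows $\overline{\bigcup_n\omega_n}=\overline{U}=W$: if $x\in U$, then $x$ lies in each of its basic neighborhoods, every such neighborhood is some $B_n$ that is already met by $\omega_n$, so $x$ belongs to the closure of $\bigcup_n\omega_n$; the reverse inclusion is trivial.

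Next, the countable family $\{\omega_n\}$ is a subchain of $\mathcal{C}$, hence totally ordered by inclusion. Enumerating it and replacing the $k$-th term by the largest of the first $k$ terms (which exists, being the maximum of finitely many nested sets) yields an increasing cofinal sequence $\omega_{n_1}\subseteq\omega_{n_2}\subseteq\cdots$ with $\bigcup_k\omega_{n_k}=\bigcup_n\omega_n$. Theorem \ref{interval-sequence-limit-set}(2) now applies directly and gives that $W=\overline{\bigcup_k\omega_{n_k}}$ is an $\omega$-limit set, that is, $W\in\omega_f$.

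Finally I would invoke maximality of the chain. By construction $\omega\subseteq W$ for every $\omega\in\mathcal{C}$, so $W$ is comparable to each element of $\mathcal{C}$, and therefore $\mathcal{C}\cup\{W\}$ is again a chain in $\omega_f$. Maximality of $\mathcal{C}$ forces $W\in\mathcal{C}$, and $W$ is then the greatest element, hence a maximal element, of $\mathcal{C}$. The main obstacle is precisely the first step: Theorem \ref{interval-sequence-limit-set}(2) is stated only for increasing \emph{sequences}, so the entire difficulty lies in the separability reduction that replaces an arbitrary (possibly uncountable) chain by a cofinal sequence without changing the closure of the union.
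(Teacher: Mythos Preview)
Your proof is correct and follows the same approach the paper indicates: use Theorem~\ref{interval-sequence-limit-set}(2) to show the closure of the union of the chain lies in $\omega_f$, then invoke maximality of the chain to place it inside. The paper gives only a one-line remark citing Theorem~\ref{interval-sequence-limit-set} and Zorn's Lemma; your argument supplies the missing separability reduction from an arbitrary chain to a cofinal increasing sequence, and rightly does not need Zorn's Lemma for the statement as phrased.
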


\begin{lem}\label{interval-maximal-omega-set}
Let $f\in C(I,I)$ with $h(f)=0$. If $\omega_f(x)$ and $\omega_f(x)$
are  two maximal $\omega$-limit  sets, then $\omega_f(x)$ and
$\omega_f(x)$ either coincide or are disjoint.
\end{lem}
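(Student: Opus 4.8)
The plan is to reduce everything to the gluing result Theorem~\ref{interval-sequence-limit-set}(1): if two $\omega$-limit sets share a point at which both accumulate from the \emph{same} side, then their union is again an $\omega$-limit set, and maximality then forces the two sets to coincide. Write $\omega_1$ and $\omega_2$ for the two maximal $\omega$-limit sets. The difficulty is that an arbitrary common point $a\in\omega_1\cap\omega_2$ need not be a same-side accumulation point of both: for instance $\omega_1$ could approach $a$ only from the left while $\omega_2$ approaches it only from the right, so that $a$ is isolated in $\omega_1\cap\omega_2$ and Theorem~\ref{interval-sequence-limit-set}(1) cannot be applied at $a$ directly. The key idea is to not work with $a$ itself but to pass to the smaller $\omega$-limit set $\omega_f(a)$, which is still contained in the intersection.

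First I would record the standard facts that every $\omega$-limit set is closed and satisfies $f(\omega_f(z))=\omega_f(z)$. Hence, if $a\in\omega_1$, the whole forward orbit of $a$ lies in $\omega_1$, and taking limit points gives $\omega_f(a)\subseteq\omega_1$; likewise $\omega_f(a)\subseteq\omega_2$. Thus $\omega_f(a)\subseteq\omega_1\cap\omega_2$ is itself an $\omega$-limit set common to both.

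Then I would split into two cases according to the cardinality of $\omega_f(a)$. If $\omega_f(a)$ is finite, it is a periodic orbit $O$; since $O\subseteq\omega_1$ and $h(f)=0$, Theorem~\ref{interval-zero-entropy}(3) (an $\omega$-limit set cannot \emph{properly} contain a periodic orbit) forces $\omega_1=O$, and symmetrically $\omega_2=O$, whence $\omega_1=\omega_2$; this is the only place where the zero-entropy hypothesis enters. If instead $\omega_f(a)$ is infinite, then as a compact infinite subset of $I$ it has an accumulation point $b\in\omega_f(a)$, and $b$ must be a one-sided limit point of $\omega_f(a)$, say from the left. Because $\omega_f(a)$ sits inside both $\omega_1$ and $\omega_2$, this same $b$ is a left-sided limit point of $\omega_1$ and of $\omega_2$ simultaneously. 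Now Theorem~\ref{interval-sequence-limit-set}(1) applied at $b$ shows $\omega_1\cup\omega_2$ is an $\omega$-limit set; since it contains each of the maximal sets $\omega_1$ and $\omega_2$, maximality gives $\omega_1=\omega_1\cup\omega_2=\omega_2$.

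The main obstacle, and the real content of the argument, is precisely the reduction described in the first paragraph: recognizing that replacing the possibly ``opposite-side touching'' point $a$ by any accumulation point of the common subsystem $\omega_f(a)$ manufactures a genuine same-side common limit point, which is what makes the gluing theorem usable. Once that reduction is in place, the remaining ingredients (invariance and closedness of $\omega$-limit sets, the fact that a finite $\omega$-limit set is a periodic orbit, the elementary observation that an accumulation point in $I$ is a one-sided limit point, and the maximality argument) are all routine.
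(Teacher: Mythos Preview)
Your proof is correct and follows essentially the same route as the paper: a finite/infinite dichotomy, with the finite case handled via Theorem~\ref{interval-zero-entropy}(3) and the infinite case by producing a common same-side limit point and invoking Theorem~\ref{interval-sequence-limit-set}(1), then concluding by maximality. The only cosmetic difference is that the paper splits on the cardinality of the full intersection $P=\omega_1\cap\omega_2$ (using $f(P)\subset P$ to locate a periodic point when $P$ is finite) rather than passing to $\omega_f(a)$ for a chosen $a\in P$ as you do.
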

\begin{proof}
Assume that $P=\omega_f(x)\cap\omega_f(y)\neq \emptyset$. If $P$ is
finite, then $P$ contains  a periodic point since $f(P)\subset P$.
By Proposition \ref{interval-zero-entropy}(3), both
$\omega_f(x)$ and $\omega_f(y)$ are periodic orbits, which implies
$\omega_f(x)=\omega_f(y)$. If $P$ is infinite, then any limit point
of $P$  is a limit point from the left or from the right of both $\omega_f(x)$ and
$\omega_f(y)$. By Proposition \ref{interval-sequence-limit-set}(1),
$\omega_f(x)\cup \omega_f(y)$ is also an $\omega$-limit set. Then the maximality of $\omega_f(x)$ and
$\omega_f(y)$ imply that they coincide.
\end{proof}

\begin{prop}\cite[Lemma VI.14]{Block1992} \label{interval-periodic-portion-2}
Let $f\in C(I,I)$ with $h(f)=0$ and $x\in I$. Suppose $\omega_f(x)$
is infinite.  For every $k\geq 1$ and $i=0,1,\ldots, 2^k-1$, let
$$J_k^i=[\min \omega_{f^{2^k}}(f^i(x)), \max \omega_{f^{2^k}}(f^i(x))]. $$
Then
\begin{enumerate}
\item $f(J_k^i)\supset J_k^{i+1(\text{mod } 2^k)}$,
\item the closed intervals $(J_k^i)_{0\leq i < 2^k}$ are pairwise disjoint,
\item $J_{k+1}^i\cup J_{k+1}^{2^k+i}\subset J_k^i$ for $0\leq i<2^k$. Both $J_{k+1}^i$
and $J_{k+1}^{2^k+i}$ have an endpoint in common with $J_k^i$,
\item for every $0\leq i<2^k$, $J_{k}^i$ contains a periodic point of period $2^k$,
but no periodic point of period less than $2^k$.
\end{enumerate}
\end{prop}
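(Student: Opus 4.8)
The plan is to reduce all four assertions to one base case—the statement for a single squaring—and then to concentrate the real work there. Write $\omega_k^i=\mathrm{conv}$-free as $\omega_k^i:=\omega_{f^{2^k}}(f^i(x))$, so $J_k^i=\mathrm{conv}(\omega_k^i)$. Splitting the orbit of $x$ according to residues $\mathrm{mod}\ 2^k$ gives $\omega_f(x)=\bigcup_{i=0}^{2^k-1}\omega_k^i$, and since each $\omega_k^i$ is strongly invariant under $f^{2^k}$ while $f(\omega_k^i)\subseteq\omega_k^{i+1}$ by continuity, chaining these inclusions around the cycle forces $f(\omega_k^i)=\omega_k^{i+1\,(\mathrm{mod}\,2^k)}$ for every $i,k$. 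This already yields (1): $f(J_k^i)$ is an interval containing $f(\omega_k^i)=\omega_k^{i+1}$, hence containing $J_k^{i+1}$. It also shows every $\omega_k^i$ is infinite, since a finite one would make $\omega_f(x)$ finite through the cyclic relation.

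I would then isolate the following Base Lemma, which is where $h(f)=0$ is genuinely used: if $g\in C(I,I)$ has $h(g)=0$ and $\omega_g(y)$ is infinite, then, with $A=\omega_{g^2}(y)$ and $B=g(A)=\omega_{g^2}(g(y))$, (i) $A,B$ are infinite with $g(A)=B$ and $g(B)=A$; (ii) $\mathrm{conv}(A)\cap\mathrm{conv}(B)=\emptyset$; and (iii) $g$ has no fixed point in $\mathrm{conv}(A)$. Assuming it, (2) and (3) follow by induction on $k$, the base being the single interval $J_0^0=\mathrm{conv}(\omega_f(x))$. Applying the Base Lemma to $g=f^{2^k}$ and $y=f^i(x)$ (valid since $h(f^{2^k})=0$ and $\omega_k^i$ is infinite), and noting $\omega_{g^2}(y)=\omega_{k+1}^i$ and $\omega_{g^2}(g(y))=\omega_{k+1}^{2^k+i}$, shows $J_{k+1}^i$ and $J_{k+1}^{2^k+i}$ are disjoint subintervals of $J_k^i=\mathrm{conv}(J_{k+1}^i\cup J_{k+1}^{2^k+i})$; as their hulls are disjoint, one lies entirely to the left of the other, so together they reach both endpoints of $J_k^i$—this is (3). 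Children of distinct parents are disjoint because the parents are (induction hypothesis), and siblings are disjoint by the above, so all $2^{k+1}$ intervals are pairwise disjoint, which is (2).

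For (4), granting (1) and (2): the covering loop $f(J_k^i)\supseteq J_k^{i+1}$ carries, by the standard lemma that a cyclic chain of covering intervals contains a periodic point following it, a point $p\in J_k^0$ with $f^t(p)\in J_k^t$ for $0\le t<2^k$ and $f^{2^k}(p)=p$; disjointness makes the points $p,f(p),\dots,f^{2^k-1}(p)$ distinct and excludes every smaller period $2^j$ (otherwise $p=f^{2^j}(p)\in J_k^0\cap J_k^{2^j}=\emptyset$), so $p$ has period exactly $2^k$. For the absence of smaller periods inside $J_k^i$, note that a periodic point there of period $2^j$ with $j<k$ is fixed by $F:=f^{2^{k-1}}$; iterating (1) gives $F(J_k^i)\supseteq J_k^{i'}$ and $F(J_k^{i'})\supseteq J_k^i$ with $i'=i+2^{k-1}\,(\mathrm{mod}\,2^k)\neq i$ and $J_k^i\cap J_k^{i'}=\emptyset$, while $J_k^i=\mathrm{conv}(\omega_{F^2}(f^i(x)))$; so the Base Lemma (iii), applied to $g=F$, forbids a fixed point of $F$ in $J_k^i$, hence any periodic point of period less than $2^k$.

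The Base Lemma is the crux and the main obstacle. Part (i) is immediate from the decomposition $\omega_g(y)=A\cup B$ and $g(A)=B$, $g(B)=A$ (a finite $A$ would force $\omega_g(y)$ finite). For (ii) and (iii) I would argue by contradiction. From $g(A)=B$ and $g(B)=A$ one gets $g(\mathrm{conv}A)\supseteq\mathrm{conv}B$ and $g(\mathrm{conv}B)\supseteq\mathrm{conv}A$; if the two hulls overlapped, or if $g$ had a fixed point in $\mathrm{conv}A$, I would use the infinitely many points of $A$ inside $\mathrm{conv}A$ and of $B$ inside $\mathrm{conv}B$, together with these covering relations, to produce two disjoint closed subintervals each mapped by $g^2$ over their union—a horseshoe—whence $h(g)>0$; equivalently the configuration yields a periodic point of period not a power of $2$, contradicting Theorem \ref{interval-zero-entropy}. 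The delicate points, where I expect most of the work, are the case analysis on the relative positions of $\mathrm{conv}A$ and $\mathrm{conv}B$ and the verification that the horseshoe is built from genuine points of the invariant sets $A$ and $B$ rather than arbitrary points of their hulls.
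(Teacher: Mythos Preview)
The paper does not give its own proof of this proposition; it is quoted verbatim from Block--Coppel (Lemma~VI.14) and used as background. So there is nothing in the paper to compare against, and the relevant benchmark is the classical argument in \cite{Block1992}.

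Your outline is sound and essentially follows the Block--Coppel strategy: reduce everything to one ``squaring'' step and iterate. The deductions of (1), (3), and the induction for (2) from your Base Lemma are correct, and your treatment of (4) is clean---in particular, invoking Base Lemma~(iii) for $g=f^{2^{k-1}}$ is exactly the right way to exclude smaller periods (implicitly using Theorem~\ref{interval-zero-entropy}(2) so that any smaller period divides $2^{k-1}$).

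The one genuine obstacle you flag yourself: the Base Lemma is only sketched. The horseshoe idea is the right mechanism, but the passage ``if the two hulls overlapped \ldots\ produce two disjoint closed subintervals each mapped by $g^2$ over their union'' hides real work. From $g(\mathrm{conv}\,A)\supseteq\mathrm{conv}\,B$ and $g(\mathrm{conv}\,B)\supseteq\mathrm{conv}\,A$ alone one does \emph{not} immediately obtain a horseshoe when the hulls overlap; one needs to exploit that $A$ and $B$ are themselves infinite invariant sets (so one can pick several points of $A$ and of $B$ to force the required covering relations), and the case analysis on the relative positions of $\min A,\max A,\min B,\max B$ is unavoidable. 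Block--Coppel handle this over several preparatory lemmas (roughly Lemmas~VI.6--VI.13), and an alternative shortcut is to combine Theorem~\ref{interval-zero-entropy}(3) (an infinite $\omega$-limit set contains no periodic point) with an intermediate-value argument to locate a fixed point of $g^2$ inside $A\cup B$ when the hulls meet. Either route works, but as written your proposal has not yet discharged this step; once the Base Lemma is established in full, the rest of your argument is complete and matches the classical proof.
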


We call those intervals $(J_k^i)_{k\geq 1, 0\leq i <2^k}$ are a
{\em periodic portion} of  the $\omega$-limit set $\omega_f(x)$. The
periodic portion of an $\omega$-limit set does not depend on the
choose of the base point, i.e. if $\omega_f(x)=\omega_f(y)$, then
they have the same periodic portion.

\begin{thm}\cite[Lemma VI.16, VI.18]{Block1992}\label{interval-structure-of-limit-set}
Let $f\in C(I,I)$ with $h(f)=0$ and $x\in I$. Suppose $\omega_f(x)$
is infinite.  Let $(J_k^i)_{k\geq 1, 0\leq i <2^k}$ be the periodic
portion of $\omega_f(x)$ and
$$C(x)=\bigcap_{k=1}^\infty \bigcup_{i=0}^{2^k-1} J_k^i. $$
Then $C(x)$ is closed, $f(C(x))=C(x)$ and $C(x)\cap Per(f)=\emptyset$.

Moreover, for every nested sequence $J_1^{i_1}\supset
J_2^{i_2}\supset \cdots$,  put $K= \bigcap_{k=1}^\infty J_k^{i_k}$,
then exactly one of the following alternatives holds:
\begin{enumerate}
\item $K=\{y\}\subset\omega_f(x)$ and $y$ is regularly recurrent,
\item $K=[y,z]$, $K\cap\omega_f(x)=\{y,z\}$ and both endpoints of $K$ are strongly recurrent but not regularly recurrent,
\item $K=[y,z]$, $K\cap\omega_f(x)=\{y,z\}$ and one endpoint of $K$ is regularly recurrent and the other is not recurrent.
\end{enumerate}
In every case, if $y\in \omega_f(x)$, then $y$ is regularly recurrent iff $\lim_{n\to\infty}f^{2^n}(y)= y$.
\end{thm}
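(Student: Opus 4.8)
The plan is to separate the elementary assertions from the substantive dynamical ones. Throughout write $E_k=\bigcup_{i=0}^{2^k-1}J_k^i$, so that $C(x)=\bigcap_{k\ge 1}E_k$ with $E_1\supset E_2\supset\cdots$ by Proposition~\ref{interval-periodic-portion-2}(3), and write $W_k^i=\omega_{f^{2^k}}(f^i(x))$ for the $\omega$-piece whose convex hull is $J_k^i$. Two standing facts will be used repeatedly: first, since $f$ commutes with $f^{2^k}$, one has $f(W_k^i)=W_k^{i+1\,(\mathrm{mod}\,2^k)}$ and $\bigcup_{i}W_k^i=\omega_f(x)$, so in particular $\omega_f(x)\subset C(x)$; second, $f^{2^k}(W_k^i)=W_k^i$, and because under $f^{2^{k+1}}$ each of the two children $W_{k+1}^i,W_{k+1}^{2^k+i}$ of $W_k^i$ is invariant, $f^{2^k}$ \emph{interchanges} these two children. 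The closedness of $C(x)$ is immediate (each $E_k$ is a finite union of closed intervals and the $E_k$ are nested), and $C(x)\cap Per(f)=\emptyset$ follows from Proposition~\ref{interval-periodic-portion-2}(4): a point of $C(x)$ lies in some $J_k^{i_k}$ for every $k$, so if it were periodic its period would be at least $2^k$ for all $k$, which is absurd.

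For the invariance $f(C(x))=C(x)$ I would first note that, the $E_k$ being nested compacta and $f$ continuous, $f(C(x))=\bigcap_k f(E_k)$. Proposition~\ref{interval-periodic-portion-2}(1) gives $f(E_k)=\bigcup_i f(J_k^i)\supset\bigcup_i J_k^{i+1}=E_k$, whence $f(C(x))\supset C(x)$. The reverse inclusion is proved fibrewise along the sets $K_p=\bigcap_k J_k^{i_k}$ containing a given $p\in C(x)$: when $K_p$ is a single point the diameters $|J_k^{i_k}|$ tend to $0$, so by uniform continuity $f(J_k^{i_k})$ shrinks while still containing $J_k^{i_k+1}$, forcing $f(p)\in\bigcap_k J_k^{i_k+1}\subset C(x)$; when $K_p$ is a nondegenerate interval one shows instead that $f$ carries it into the successor fibre $\bigcap_k J_k^{i_k+1}$, using that both endpoints of $K_p$ lie in $\omega_f(x)$ and map into $W_k^{i_k+1}$.

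The trichotomy is organised around the nested intervals themselves. For a nested sequence $J_1^{i_1}\supset J_2^{i_2}\supset\cdots$ the intersection $K$ is a nested intersection of closed intervals, hence a closed interval $[y,z]$ (possibly degenerate). Because each $J_{k+1}^{i_{k+1}}$ shares one endpoint with $J_k^{i_k}$, the left and right endpoints $a_k^{i_k},b_k^{i_k}$ of $J_k^{i_k}$ increase to $y$ and decrease to $z$ respectively; as $a_k^{i_k},b_k^{i_k}\in W_k^{i_k}\subset\omega_f(x)$ this gives $y,z\in\omega_f(x)$. That the interior of $K$ meets $\omega_f(x)$ in nothing is seen from $\omega_f(x)\cap J_k^{i_k}=W_k^{i_k}=W_{k+1}^{i_k}\cup W_{k+1}^{2^k+i_k}$, so that $\omega_f(x)$ avoids the gap between the two children at every level; a squeezing argument then confines $K\cap\omega_f(x)$ to $\{y,z\}$. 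The three cases correspond to the recurrence types of $y$ and $z$, which leads to the last assertion.

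The recurrence characterisation is the crux. For $y\in\omega_f(x)$ one has $y\in W_n^{i_n}$, hence $f^{k2^n}(y)=(f^{2^n})^k(y)\in W_n^{i_n}\subset J_n^{i_n}$ for every $k$. If $K=\{y\}$ then $|J_n^{i_n}|\to0$, so $f^{2^n}(y)\to y$ and, taking $N=2^n$ with $J_n^{i_n}$ inside a prescribed neighbourhood, $y$ is regularly recurrent; conversely, regular recurrence confines the orbit to arbitrarily small $J_n^{i_n}$ along the times $2^n$, giving $\lim_n f^{2^n}(y)=y$. For a nondegenerate $K=[y,z]$ the interchange of the two children by $f^{2^n}$ decides everything: an endpoint is regularly recurrent exactly when its position among the nested children eventually stabilises so that $f^{2^n}$ returns it close to itself (equivalently $\lim_n f^{2^n}$ fixes it), it is merely strongly recurrent when the returns persist with bounded gaps but without this stabilisation, and it fails to be recurrent when the swapping carries it permanently to the opposite end. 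Establishing precisely this trichotomy, together with the nondegenerate case of $f(C(x))\subset C(x)$, is where the real work lies: both hinge on controlling the non-monotone behaviour of $f$ on the intervals $K$, and it is here that zero entropy, through Proposition~\ref{interval-periodic-portion-2}, is indispensable.
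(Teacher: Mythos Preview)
The paper does not prove this theorem at all: it is quoted verbatim from \cite[Lemmas VI.16, VI.18]{Block1992} with no argument supplied, so there is no in-paper proof to compare your proposal against.

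On its own terms your outline is sound and, where complete, correct. A small simplification: in the degenerate case $K_p=\{p\}$ you need not invoke uniform continuity for $f(C(x))\subset C(x)$; since the endpoints of $J_k^{i_k}$ lie in $\omega_f(x)$ and converge to $p$, you have $p\in\omega_f(x)$ directly, whence $f(p)\in\omega_f(x)\subset C(x)$. The genuine gaps are exactly the two you flag. For $f([y,z])\subset C(x)$ in the nondegenerate case, the obstacle is that $f(J_k^{i_k})$ may properly overshoot $J_k^{i_k+1}$, so connectedness and the images of the endpoints do not by themselves confine $f([y,z])$; one has to use that $(y,z)$ contains no periodic point together with the periodic points separating the $J_k^i$ (Lemma~\ref{interval-key-lemma}) to rule out escape. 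For the trichotomy, your picture of endpoint stabilisation versus perpetual swapping under $f^{2^k}$ is the right one, but turning it into a proof that \emph{exactly} one of (1)--(3) holds is a combinatorial bookkeeping exercise on which endpoint of $J_k^{i_k}$ is shared with $J_{k+1}^{i_{k+1}}$ at each level, and is longer than your sketch indicates; this is precisely the content of the cited lemmas in Block--Coppel.
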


\begin{thm}\cite[Theorem VI.30]{Block1992}\label{interval-adding-machine}
Let $f\in C(I,I)$ with $h(f)=0$ and $x\in I$. If $Y=\omega_f(x)$ is
infinite, then there  exists a continuous map $\phi$ from $Y$ onto
the adding machine $J$ such that except at most countable points in
$J$ which have two preimages, other points have exact one preimage
and
$$\phi\circ f(y)= \tau \circ \phi(y), \ \forall y\in Y.$$
Moreover, $\phi$ maps $Y$ homeomorphically onto $J$ iff every point $y\in Y$ is regularly recurrent iff $(Y,f)$ is a minimal subsystem.
\end{thm}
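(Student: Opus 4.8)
The plan is to build $\phi$ as the \emph{address map} attached to the nested system $(J_k^i)$ from Proposition \ref{interval-periodic-portion-2}, realizing the adding machine as the inverse limit $J=\varprojlim \mathbb Z/2^k\mathbb Z$ with $\tau$ the ``add one'' odometer. Since $Y=\omega_f(x)=\bigsqcup_{i=0}^{2^k-1}\omega_{f^{2^k}}(f^i(x))$ and these pieces sit in the pairwise disjoint intervals $J_k^i$, each $y\in Y$ lies in exactly one $J_k^{a_k(y)}$ at level $k$; by the nesting in Proposition \ref{interval-periodic-portion-2}(3) the integers $a_k(y)$ satisfy $a_{k+1}(y)\equiv a_k(y)\ (\mathrm{mod}\ 2^k)$, so $(a_k(y))_k$ is a coherent sequence and I set $\phi(y)=(a_k(y))_k\in J$. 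First I would check $\phi$ is continuous: the $\phi$-preimage of a cylinder $\{a_k=j\}$ is $Y\cap J_k^j$, whose complement in $Y$ is the finite union $\bigcup_{i\ne j}(Y\cap J_k^i)$ of closed sets, so each such preimage is clopen and $\phi$ is continuous. Surjectivity follows because for any coherent $a\in J$ the sets $J_k^{a_k}$ are nested nonempty compacta whose intersection $K_a$ is nonempty, and Theorem \ref{interval-structure-of-limit-set} guarantees $K_a\cap Y\ne\emptyset$.

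For the fibre count, note $\phi^{-1}(a)=K_a\cap Y$, and the trichotomy of Theorem \ref{interval-structure-of-limit-set} says this is a single point when $K_a$ is degenerate (case (1)) and a two-point set $\{y,z\}$ when $K_a=[y,z]$ is nondegenerate (cases (2),(3)). The nondegenerate $K_a$ are pairwise disjoint nondegenerate closed subintervals of $[0,1]$, hence there are at most countably many of them; this gives precisely ``at most countably many points of $J$ with two preimages, and all others with exactly one.'' The intertwining $\phi\circ f=\tau\circ\phi$ is the heart of the construction: using that $f$ commutes with $f^{2^k}$ one has $f\bigl(\omega_{f^{2^k}}(f^i(x))\bigr)=\omega_{f^{2^k}}(f^{i+1}(x))$, so $y\in\omega_{f^{2^k}}(f^{a_k(y)}(x))$ forces $f(y)\in\omega_{f^{2^k}}(f^{a_k(y)+1}(x))\subset J_k^{a_k(y)+1\,(\mathrm{mod}\ 2^k)}$; thus $a_k(f(y))\equiv a_k(y)+1$ at every level, which is exactly the action of the odometer $\tau$.

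For the ``moreover'' I would prove the cycle ``$\phi$ is a homeomorphism $\Leftrightarrow$ every $y\in Y$ is regularly recurrent $\Leftrightarrow$ $(Y,f)$ is minimal.'' Since $\phi$ is a continuous surjection of a compact space onto a Hausdorff space, it is a homeomorphism iff it is injective iff every fibre is a singleton iff only case (1) of Theorem \ref{interval-structure-of-limit-set} occurs. Now if $K_{a}=\{y\}$ is always degenerate then $\mathrm{diam}\,J_k^{a_k(y)}\to0$, and since $a_n(f^{2^n}(y))\equiv a_n(y)+2^n\equiv a_n(y)$ forces $f^{2^n}(y)\in J_n^{a_n(y)}$, we get $\lim_n f^{2^n}(y)=y$, which by the last line of Theorem \ref{interval-structure-of-limit-set} is equivalent to $y$ being regularly recurrent; conversely a nondegenerate fibre produces, by the same theorem, an endpoint that is not regularly recurrent. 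This settles the first equivalence. The implication ``$\phi$ homeomorphism $\Rightarrow$ $(Y,f)$ minimal'' is immediate, as $\phi$ is then a conjugacy onto the adding machine $(J,\tau)$, which is minimal.

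The remaining implication ``$(Y,f)$ minimal $\Rightarrow$ only case (1)'' is where I expect the real difficulty. Case (3) is disposed of at once: it furnishes a non-recurrent point of $Y$, impossible in a minimal system since there every point is (uniformly) recurrent. Case (2) is the genuine obstacle: both endpoints $y,z$ of the gap $[y,z]$ are strongly (hence uniformly) recurrent, so no recurrence obstruction is available. Here I would first observe that $\langle y,z\rangle$ is in fact \emph{asymptotic}: the gaps $G_n=(\,f^n(y),f^n(z)\,)$ carry the distinct addresses $\tau^n(\phi(y))$ (the odometer has no periodic points), so they are pairwise disjoint nondegenerate subintervals of $[0,1]$, whence $|f^n(y)-f^n(z)|\to0$. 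The crux is then to show that a minimal interval system cannot carry such a gap whose two endpoints both have dense orbit; this is exactly the point at which the one-dimensional order structure of $f$ must be used (the pair $\langle y,z\rangle$ bounds a complementary interval of $Y$, and tracking how $f$ preserves or reverses this order along the orbit should contradict the simultaneous density of the orbits of $y$ and $z$). I regard isolating this order argument as the main obstacle; once case (2) is excluded the three conditions are equivalent.
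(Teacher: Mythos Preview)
The paper does not give its own proof of this statement; it is quoted from \cite[Theorem~VI.30]{Block1992} without argument, so there is no proof in the paper to compare your attempt against.

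On its own merits: your construction of $\phi$ as the address map attached to the periodic portion, and your verifications of continuity, surjectivity, the fibre structure, and the intertwining relation $\phi\circ f=\tau\circ\phi$, are all correct and follow the standard route one finds in Block--Coppel. The equivalence ``$\phi$ is a homeomorphism $\Leftrightarrow$ every point of $Y$ is regularly recurrent'' via the trichotomy of Theorem~\ref{interval-structure-of-limit-set} is also fine, as is the easy direction ``$\phi$ homeomorphism $\Rightarrow (Y,f)$ minimal'' and your disposal of case~(3) under minimality.

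Where your proposal stops is exactly where the content lies. You correctly observe that in case~(2) the fibre endpoints $y,z$ form an asymptotic pair, but then only assert that ``tracking how $f$ preserves or reverses this order along the orbit should contradict the simultaneous density of the orbits of $y$ and $z$.'' This is not an argument: minimal systems certainly admit nontrivial asymptotic pairs, both endpoints here are strongly recurrent (so each has minimal orbit closure), and nothing you have written produces an actual obstruction. Note in particular that your heuristic does not use the hypothesis ``not regularly recurrent'' from case~(2) at all, yet that hypothesis must be essential --- otherwise one could run the same sketch in case~(1) and reach an absurdity. Closing this gap requires a genuine interval-specific analysis of how $f$ acts on the nondegenerate components of $C(x)$ and their endpoints (this is where Block--Coppel invest the work); as written, your proposal correctly isolates the crux but leaves it unresolved.
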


\begin{lem}\label{interval-key-lemma}
Let $f\in C(I,I)$ with $h(f)=0$ and $x\in I$. Suppose $\omega_f(x)$
is infinite.  Let $(J_k^i)_{k\geq 1, 0\leq i <2^k}$ be the periodic
portion of $\omega_f(x)$. For every $k\geq 1$, if $J_k^{r_1}$ and $J_k^{r_2}$
are two intervals in $(J_k^i)_{0\leq i <2^k}$,
then  there exists some periodic point between $J_k^{r_1}$ and $J_k^{r_2}$.
\end{lem}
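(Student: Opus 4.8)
The plan is to place the pair $J_k^{r_1}, J_k^{r_2}$ inside the nested period-doubling family and to manufacture a periodic point of \emph{small} period from a covering relation at the right level. I would first assume, relabelling if necessary, that $J_k^{r_1}$ lies to the left of $J_k^{r_2}$; by Proposition~\ref{interval-periodic-portion-2}(2) the intervals are pairwise disjoint, so ``to the left'' and ``between'' are unambiguous. Recall from Proposition~\ref{interval-periodic-portion-2}(3) that every level-$(m{+}1)$ interval lies inside the level-$m$ interval indexed by its index modulo $2^m$; thus the level-$j$ ancestor of $J_k^r$ is $J_j^{\,r\bmod 2^j}$. Let $j^*$ be the largest integer with $0\le j^*\le k-1$ and $r_1\equiv r_2\pmod{2^{j^*}}$, and put $i=r_1\bmod 2^{j^*}=r_2\bmod 2^{j^*}$. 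Let $A$ and $B$ be the level-$(j^*{+}1)$ intervals containing $J_k^{r_1}$ and $J_k^{r_2}$ respectively; these are distinct, and their indices are $i$ and $i+2^{j^*}$ in some order, since they agree modulo $2^{j^*}$ but not modulo $2^{j^*+1}$.

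The heart of the argument is to produce a low-period periodic point in the convex hull of $A\cup B$. Set $g=f^{2^{j^*}}$ and $P=[\min A,\max B]$, so that $A$ is the left and $B$ the right end-interval of $P$. Iterating Proposition~\ref{interval-periodic-portion-2}(1) gives $f^{\,t}(J_{j^*+1}^{\,a})\supseteq J_{j^*+1}^{\,(a+t)\bmod 2^{j^*+1}}$; since the indices of $A$ and $B$ differ by $2^{j^*}$ modulo $2^{j^*+1}$, this yields $g(A)\supseteq B$ and $g(B)\supseteq A$. As $g(P)$ is a closed interval containing $A\cup B$, and hence the endpoints $\min A$ and $\max B$ of $P$, we obtain $g(P)\supseteq P$, and a standard intermediate-value argument then produces a point $q\in P$ with $g(q)=q$. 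Thus $q$ is a periodic point of $f$ whose period divides $2^{j^*}$, in particular is strictly smaller than $2^{j^*+1}$.

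It remains to locate $q$ strictly between the two given intervals. By Proposition~\ref{interval-periodic-portion-2}(4) neither $A$ nor $B$ contains a periodic point of period less than $2^{j^*+1}$, so $q\notin A\cup B$; since $P=[\min A,\max B]$ with the disjoint intervals $A$ to the left of $B$, the only part of $P$ outside $A\cup B$ is the open gap, whence $q\in(\max A,\min B)$. Finally $J_k^{r_1}\subseteq A$ and $J_k^{r_2}\subseteq B$ force $(\max A,\min B)\subseteq(\max J_k^{r_1},\min J_k^{r_2})$, so $q$ is a periodic point between $J_k^{r_1}$ and $J_k^{r_2}$. The delicate point, and the step I expect to require the most care, is the calibration of the level $j^*$ together with the power $2^{j^*}$: it must be large enough that the covering relation~(1) closes up into $g(P)\supseteq P$, yet small enough that the resulting fixed point of $g$ has $f$-period below $2^{j^*+1}$, which is exactly the inequality that property~(4) needs in order to expel $q$ from $A$ and $B$ into the separating gap.
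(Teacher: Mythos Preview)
Your proof is correct and follows essentially the same strategy as the paper's: locate the level at which the two intervals first acquire distinct ancestors, use the covering relation there to produce a fixed point of $f^{2^{j^*}}$, and invoke Proposition~\ref{interval-periodic-portion-2}(4) to expel this point from the two children into the gap. The only difference is organizational---the paper reaches the splitting level by induction on $k$ (its Case~2 simply descends one level), whereas you identify $j^*$ directly; your use of the convex hull $P=[\min A,\max B]$ in place of the full parent $J_{j^*}^{\,i}$ is a minor streamlining that spares you the ``common endpoint'' clause of Proposition~\ref{interval-periodic-portion-2}(3).
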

\begin{proof} Recall that for every $k\geq 1$ there exists a periodic point of
periodic $2^k$ in  $J_k^i$ but no periodic point of periodic $2^j$ for any $j<k$ (see Proposition \ref{interval-periodic-portion-2}).

We use induction to show the result.

If $k=1$, let $J_0=[\min \omega_{f}(x), \max \omega_{f}(x)]$. Since
$f(J_0)\supset J_0$,  there is a fixed point $p$ in $J_0$, but
neither $J_1^0$ or $J_1^1$ can contain a fixed point. Then $p$ lies
between $J_1^0$ and $J_1^1$.

Assume for all $k\leq n$, the conclusion is established.

Let $k=n+1$,  we have two cases.

Case 1: $J_{n+1}^{r_1}$ and $J_{n+1}^{r_2}$ are contained in the same
$J_{n}^{r_3}$.  Since $f^{2^n}(J_n^{r_3})\supset J_n^{r_3}$, there
exists a periodic point $p$ in $J_n^{r_3}$ of periodic $2^n$, but
neither $J_{n+1}^{r_1}$ nor $J_{n+1}^{r_2}$ can contain a periodic point
of periodic $2^n$. Then $p$ lies between $J_{n+1}^{r_1}$ and $J_{n+1}^{r_2}$.

Case 2:  $J_{n+1}^{r_1}$ and $J_{n+1}^{r_2}$ are contained in
$J_{n}^{r_3}$ and $J_{n}^{r_4}$ respectively. By induction,
there exists a periodic point $p$ between $J_n^{r_3}$ and
$J_n^{r_4}$. However, $p$ also lies between $J_{n+1}^{r_1}$ and $J_{n+1}^{r_2}$.
\end{proof}
The following Lemma was proved in \cite{Ruette2002}, for completeness we provide a proof.
\begin{lem}\label{interval-Ruette-lemma}
Let $f\in C(I,I)$ with $h(f)=0$ and $x\in I$. Suppose $\omega_f(x)$ is infinite.
\begin{enumerate}
\item If $J$ is an interval containing three distinct points of $\omega_f(x)$,  then $J$ contains a periodic point.
\item If $U$ is an open interval such that $U\cap \omega_f(x)\neq \emptyset$, then there exists $n\geq 0$ such that $f^n(U)$ contains a periodic point.
\end{enumerate}
\end{lem}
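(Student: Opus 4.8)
The plan is to reduce both parts, via Lemma~\ref{interval-key-lemma} and the structure Theorem~\ref{interval-structure-of-limit-set}, to counting how many points of $\omega_f(x)$ can sit inside one interval. Write $\Omega=\omega_f(x)$ and record two standing facts. First, $\Omega\subset C(x)=\bigcap_k\bigcup_i J_k^i$, so by the disjointness in Proposition~\ref{interval-periodic-portion-2}(2) every point of $\Omega$ lies, for each $k$, in a unique member $J_k^i$ of the level-$k$ family. Second, since $\Omega=\bigcup_{i=0}^{2^k-1}\omega_{f^{2^k}}(f^i(x))$ and $f$ maps $\omega_{f^{2^k}}(f^i(x))$ onto $\omega_{f^{2^k}}(f^{i+1}(x))$, the $2^k$ sets $J_k^i\cap\Omega=\omega_{f^{2^k}}(f^i(x))$ all have the same cardinality; as their union is infinite, each $J_k^i$ contains infinitely many points of $\Omega$.

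For (1), take three distinct points $u<v<w$ of $\Omega$ lying in $J$. If for every $k$ they belonged to a common $J_k^{i_k}$, then they would all lie in $K=\bigcap_k J_k^{i_k}$; but Theorem~\ref{interval-structure-of-limit-set} forces $K\cap\Omega$ to have at most two elements, a contradiction. Hence at some level $k$ two of the three, say the smaller $u'\in J_k^{r_1}$ and the larger $v'\in J_k^{r_2}$ with $r_1\neq r_2$, lie in distinct level-$k$ intervals. By Lemma~\ref{interval-key-lemma} there is a periodic point between $J_k^{r_1}$ and $J_k^{r_2}$, and since $u'$ lies in the left interval and $v'$ in the right one, this periodic point satisfies $u'<p<v'$, hence lies in $[u,w]\subset J$. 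This proves (1).

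For (2) it suffices, again by (1) (note that $f^n(U)$ is an interval), to produce some $n\geq 0$ with $f^n(U)$ containing three distinct points of $\Omega$. Fix $a\in U\cap\Omega$, its nested intervals $J_k^{i_k}\ni a$, and $K=\bigcap_k J_k^{i_k}$. If $K=\{a\}$, then $J_k^{i_k}\subset U$ for large $k$, and since $J_k^{i_k}$ already carries infinitely many points of $\Omega$ we finish with $n=0$; the same holds whenever $a$ is a limit point of $\Omega$, as then $U\cap\Omega$ is infinite. Likewise, if $a$ is recurrent, then (being non-periodic, as $a\in C(x)$ and $C(x)\cap Per(f)=\emptyset$) its orbit returns to $U$ infinitely often through infinitely many distinct points $f^{n_j}(a)\in\Omega$, so $U$ again meets $\Omega$ in three points and $n=0$ works.

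It remains to treat the case that $a$ is an \emph{isolated, non-recurrent} point of $\Omega$; by Theorem~\ref{interval-structure-of-limit-set} this occurs only in alternative (3), where $a$ is one endpoint of a gap $K=[y,z]$ whose other endpoint is regularly recurrent — say $y$ is regularly recurrent and $a=z=\max J_k^{i_k}$ for all large $k$. One checks that then $\min J_k^{i_k}\nearrow y$ is not eventually constant (otherwise $J_k^{i_k}$ would coincide with $K$ for large $k$, yet $J_k^{i_k}$ carries infinitely many points of $\Omega$ while $K\cap\Omega=\{y,z\}$), so $y$ is a left limit of points of $\Omega$ and every neighborhood of $y$ contains at least three of them. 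The plan is then: (i) show $\Omega$ has a \emph{unique} minimal set $M$, using the almost one-to-one factor map $\phi\colon\Omega\to J$ of Theorem~\ref{interval-adding-machine} — two disjoint minimal sets would each map onto the minimal adding machine $J$, forcing every fibre of $\phi$ to have at least two points, contradicting that only countably many fibres do; (ii) conclude $y\in M\subset\omega_f(a)$, so $f^{n_j}(a)\to y$ along some subsequence; and (iii) use the non-recurrence of $a$, which pushes points near $a$ away from $a$ and toward $y$, to show that some $f^n(U)$ is an interval reaching from near $y$ across at least three points of $\Omega$, whereupon (1) applies. I expect step (iii) to be the main obstacle: the real difficulty is not locating which points the image of $U$ passes near, but controlling the \emph{length} of some iterate $f^n(U)$, i.e.\ converting the non-recurrence of the endpoint $a$ into a genuine lower bound on $|f^n(U)|$ so that the clustering of $\Omega$ at the regularly recurrent endpoint $y$ can actually be captured.
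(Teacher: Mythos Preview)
Your proof of (1) is correct and matches the paper's: three points of $\Omega$ cannot all lie in a single nested intersection $K$ by Theorem~\ref{interval-structure-of-limit-set}, so at some level two of them occupy distinct intervals $J_k^{r_1},J_k^{r_2}$, and Lemma~\ref{interval-key-lemma} supplies a periodic point between them, hence in $J$.

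For (2), your argument is incomplete, and you say so yourself. The case split reduces the problem to the isolated, non-recurrent endpoint case, and there your step~(iii) is only a plan: you correctly identify that the obstacle is converting the non-recurrence of $a$ into a lower bound on the length of some $f^n(U)$, but you do not produce such a bound, and it is not clear how to extract one directly along these lines. The paper sidesteps the entire case analysis with a short connectedness argument. Assume $U$ has no periodic point and let $V\supset U$ be the \emph{maximal} interval with no periodic point. Pick $z\in U\cap\Omega$ and three return times $0<n_1<n_2<n_3$ with $f^{n_j}(x)\in U$. The points $z,\,f^{n_2-n_1}(z),\,f^{n_3-n_1}(z)$ are three distinct points of $\Omega$ (distinct because $\Omega\subset C(x)$ and $C(x)\cap Per(f)=\emptyset$); if all three lay in $V$, part~(1) would force a periodic point into $V$, a contradiction. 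So some $f^{n_j-n_1}(z)\notin V$, while $f^{n_j-n_1}(U)\ni f^{n_j}(x)\in U\subset V$. Thus the interval $f^{n_j-n_1}(U)$ meets $V$ but is not contained in $V$; were it free of periodic points, $V\cup f^{n_j-n_1}(U)$ would be a strictly larger periodic-point-free interval containing $U$, contradicting maximality. The maximal-interval device is precisely the missing length control: it replaces ``$f^n(U)$ is long enough'' by ``$f^n(U)$ exits $V$'', and that is exactly what the three-point count in $\Omega$ delivers.
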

\begin{proof}
Let $(J_k^i)_{k\geq 1, 0\leq i <2^k}$ be the periodic portion of $\omega_f(x)$.

(1) Let $J\cap \omega_f(x)\supset \{x_1, x_2, x_3\}$. Without loss
of generality,  assume $x_1<x_2<x_3$. Then there exists a  nested
sequence $J_1^{i_{j_1}}\supset J_2^{i_{j_2}}\supset \cdots$ such
that $x_j\in \bigcap_{k=1}^\infty J_k^{i_{j_k}}$ for $j=1,2,3$. Let
$K_j= \bigcap_{k=1}^\infty J_k^{i_{j_k}}$, then $K_1\cap
K_3=\emptyset$, since $\#(K_j\cap \omega_f(x))\leq 2$ for $j=1,2,3$.
Then by Lemma \ref{interval-key-lemma} there exists a periodic point
$p$ between $K_1$ and $K_3$. Hence, $p\in J$ since $J$ is connected.

(2) Let $z\in U\cap \omega_f(x)$. If $U$ contains a periodic point,
then the proof is complete. Otherwise, let $V \supset U$ be the
maximal subinterval that contains no periodic point. By assumption,
there are $0<n_1<n_2<n_3$ such that $f^{n_j}(x)\in U$ for $j=1,2,3$.
The points $\{z, f^{n_3-n_1}(z), f^{n_2-n_1}(z)\}$ are distinct and
contained in $\omega_f(x)$. If  $\{z, f^{n_3-n_1}(z),
f^{n_2-n_1}(z)\} \subset V$, then the first part of the proof
implies that $V$ contains a periodic point, which contradicts the
definition of $V$. Thus, there exists $j\in \{2,3\}$ such that
$f^{n_j-n_1}(z)\not \in V$. The interval $f^{n_j-n_1}(U)$ contains
both $f^{n_j}(x)\in V$ and $f^{n_j-n_1}(z)\not\in V$. Therefore,
$f^{n_j-n_1}(U)$ must contains a periodic point by the maximality
of the interval $V$.
\end{proof}

\begin{rem}\label{interval-periodic-portion-3}
Let $f\in C(I,I)$ with $h(f)=0$ and $x\in I$. Suppose $\omega_f(x)$
is infinite.  Let $(J_k^i)_{k\geq 1, 0\leq i <2^k}$ be the periodic
portion of $\omega_f(x)$. For every $k\geq 1$, since $(J_k^i)_{0\leq
i< 2^k}$ are pairwise disjoint closed intervals, let
$s_k=\min\{\frac{1}{k}, \frac{1}{4}d(J_k^i, J_k^{i+1}): 0\leq
i<2^k-1\}>0$ and $J_{k,s_k}^i=B(J_k^i, s_k)$, where $d(J_k^i,
J_k^{j})=\inf\{|x-y|:x\in J_k^i, y\in J_k^j\}$ and $B(J_k^i,
s_k)=\{x\in I: |x-y|<s_k $  for some $y\in J_k^i\}$. Then  $(J_{k,
s_k}^i)_{0\leq i< 2^k}$ are pairwise disjoint open intervals and
$d(J_{k,s_k}^i, J_{k,s_k}^{i+1})>s_k$ for $0\leq i<2^k-1$. For
convenience, we also call $(J_{k, s_k}^i)_{k\geq 1, 0\leq i< 2^k}$ is the
{\em periodic portion} of $\omega_f(x)$.
\end{rem}

\begin{lem}\label{interval-omega-lim-lem}
Let $f\in C(I,I)$ with $h(f)=0$ and  $x\in I$. Suppose $\omega_f(x)$
is infinite.  Let $(J_{k,s_k}^i)_{k\geq 1, 0\leq i <2^k}$ be the
periodic portion of $\omega_f(x)$. If $y\in I$ with
$\omega_f(y)\subset \omega_f(x)$, then for every $k\geq 1$ there
exists some $n_k\geq 0$ such that  $f^{n}(y)\in
J_{k,s_k}^{n-n_k(\text{mod }2^k)}$ for all $n\geq n_k$.
\end{lem}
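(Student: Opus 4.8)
The plan is to track the \emph{itinerary} of the orbit of $y$ through the enlarged intervals and to show it eventually follows the cyclic order forced by the dynamics on $\omega_f(x)$. I would first record the combinatorial structure of $\omega_f(x)$ relative to level $k$. Writing $g=f^{2^k}$ and $W_i=\omega_g(f^i(x))$, the standard decomposition $\omega_f(x)=\bigcup_{i=0}^{2^k-1}W_i$ together with $W_i\subset J_k^i$ (by the very definition of $J_k^i$ as the convex hull of $W_i$) and the pairwise disjointness of the $J_k^i$ gives $W_i=\omega_f(x)\cap J_k^i$. Since $f$ commutes with $g$, one checks $f(W_i)\subset W_{i+1\,(\mathrm{mod}\,2^k)}$, so $f$ cyclically permutes the pieces: $f(\omega_f(x)\cap J_k^i)\subset J_k^{i+1\,(\mathrm{mod}\,2^k)}$.

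Next I would trap the orbit. The set $G=\bigcup_{i=0}^{2^k-1}J_{k,s_k}^i$ is open and contains $\bigcup_i J_k^i\supset\omega_f(x)\supset\omega_f(y)$; by the standard fact that a forward orbit eventually enters and remains in any open neighborhood of its $\omega$-limit set, there is an $N$ with $f^n(y)\in G$ for all $n\geq N$. Because the $J_{k,s_k}^i$ are pairwise disjoint, for each $n\geq N$ there is a unique index $a_n\in\{0,\dots,2^k-1\}$ with $f^n(y)\in J_{k,s_k}^{a_n}$; this $a_n$ is the phase we must control.

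The crux is to prove that $a_{n+1}=a_n+1\ (\mathrm{mod}\ 2^k)$ for all large $n$, and this is the step I expect to be the main obstacle. I would argue by contradiction: if the set of $n\geq N$ with $a_{n+1}\neq a_n+1$ were infinite, then since only finitely many index-pairs occur, some pair $(a,b)$ with $b\neq a+1\ (\mathrm{mod}\ 2^k)$ would occur along a subsequence $n_j$, and by compactness I may assume $f^{n_j}(y)\to w\in\omega_f(y)\subset\omega_f(x)$. Then $w\in\overline{J_{k,s_k}^{a}}$, so $d(w,J_k^a)\leq s_k$; on the other hand the pairwise disjointness of the enlarged intervals forces $d(J_k^a,J_k^i)\geq 2s_k$ for $i\neq a$, whence $w\in J_k^a$. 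Applying the permutation property gives $f(w)\in J_k^{a+1}\subset J_{k,s_k}^{a+1}$, and continuity yields $f^{n_j+1}(y)=f(f^{n_j}(y))\to f(w)\in J_{k,s_k}^{a+1}$; as $J_{k,s_k}^{a+1}$ is open this forces $a_{n_j+1}=a+1$ for large $j$, contradicting $a_{n_j+1}=b\neq a+1$.

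Finally, once $a_{n+1}=a_n+1$ holds for all $n\geq N'$, the phases satisfy $a_n\equiv a_{N'}+(n-N')\ (\mathrm{mod}\ 2^k)$, and it remains to fix $n_k$: choosing any integer $n_k\geq N'$ with $n_k\equiv N'-a_{N'}\ (\mathrm{mod}\ 2^k)$ gives $n-n_k\equiv a_n\ (\mathrm{mod}\ 2^k)$, so $f^n(y)\in J_{k,s_k}^{n-n_k\,(\mathrm{mod}\,2^k)}$ for every $n\geq n_k$, as required. The two facts doing the real work are the cyclic permutation of the pieces $W_i$ and the trapping of the orbit in $G$; the disjointness-with-margin of the enlarged intervals is exactly what converts an accumulation point lying in $\overline{J_{k,s_k}^a}$ back into a genuine point of $J_k^a$.
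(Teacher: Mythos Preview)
Your proposal is correct and follows essentially the same approach as the paper: trap the orbit in $\bigcup_i J_{k,s_k}^i$, then argue by contradiction via a convergent subsequence that the itinerary eventually cycles, using that $f$ permutes the pieces $\omega_f(x)\cap J_k^i$ cyclically. Your justification that the limit point $w$ actually lies in $J_k^a$ (via the $2s_k$ separation of the $J_k^i$) is a bit more explicit than the paper's, which simply asserts it, but the argument is otherwise identical.
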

\begin{proof}
Since $\omega_f(y)\subset \omega_f(x)$, then for every $k\geq 1$
there exists some $n_{k_0}\geq 0$ such that $f^n(y)\in
\bigcup_{i=0}^{2^k-1} J_{k,s_k}^i$ for all $n\geq n_{k_0}$.

{\bf Claim}: For every $k\geq 1$ there exists $n_{k_1}\geq n_{k_0}$, such that for every
$n\geq n_{k_1}$,  if $f^n(y)\in J_{k,s_k}^i$, then $f^{n+1}(y)\in
J_{k,s_k}^{i+1(\text{mod }2^k)}$.

{\bf Proof of the Claim}. If not, then there exists a sequence
$\{n_q\}$,  such that $f^{n_q}(y)\in J_{k,s_k}^{i_q}$ and
$f^{n_q+1}(y)\not \in J_{k,s_k}^{i_q+1(\text{mod }2^k)}$.
Without loss of generality, we assume $\lim_{q\to\infty}f^{n_q}(y)=a$,
$f^{n_q}(y)\in J_{k,s_k}^{i_0}$ and $f^{n_q+1}(y)\in
J_{k,s_k}^{i_1}$ with $i_1\neq i_0+1 (\text{mod }2^k)$. Then $a\in
J_k^{i_0}$ but $f(a)\in J_k^{i_1}$. This contradicts the fact that
$f(\omega_f(x)\cap J_k^i)=\omega_f(x)\cap J_k^{i+1(\text{mod
}2^k)}$. Then the proof of the Claim is complete.

Now, choose $n_k\geq n_{k_1}$ such that $f^{n_k}(y)\in J_{k,s_k}^0$,
then  $f^{n}(y)\in J_{k,s_k}^{n-n_k(\text{mod }2^k)}$ for all
$n\geq n_k$.
\end{proof}

\section{Chaos for zero entropy maps}
Throughout this section, if  without any other statements, we assume that $f\in C(I,I)$ with $h(f)=0$.
\subsection{Proximal relation}
First, we consider the proximal relation of $f$. If $\langle
x,y\rangle \in I^2$  is proximal, then $\omega_f(x)\cap
\omega_f(y)\neq \emptyset$.  Two maximal $\omega$-limit sets containing
$\omega_f(x)$ and $\omega_f(y)$ respectively are not disjoint, then
they coincide by Lemma \ref{interval-maximal-omega-set}.

We define a ``kneading sequence'' for one point according to the
periodic portion  of its maximal $\omega$-limit set, which can
characterize the proximal pair following the idea in  \cite{BL95}.
\begin{defn}\label{interval-defn-ckx}
Let $f\in C(I,I)$ with $h(f)=0$ and $x\in I$. If $\omega_f(x)$ is
infinite,  then there exists a unique maximal $\omega$-limit set
$\omega_0$ which contains $\omega_f(x)$. Let $(J_{k, s_k}^i)_{k\geq
1, 0\leq i <2^k}$ be the periodic portion of $\omega_0$. By Lemma
\ref{interval-omega-lim-lem}, for every $k\geq 1$ we can define
$$c_x(k)=\min\left\{n_k\in \mathbb N:\ f^n(x)\in J_{k,s_k}^{n-n_k(\text{mod }2^k)}
\text{ for all } n\geq n_k\right\}$$
\end{defn}

It is easy to see that $c_x(k)=c_x(k+1)\ (\text{mod }2^k)$.

\begin{prop}\label{interval-prox-eq}
Let $f\in C(I,I)$ with $h(f)=0$ and  $ x,y \in I$. If $\omega_f(x)$
and $\omega_f(y)$ are  contained in the same maximal limit set
$\omega_0$ which is infinite, then the following conditions are
equivalent:
\begin{enumerate}
\item $\langle x,y \rangle $ is proximal;
\item $c_x(k)=c_y(k) (\text{mod }2^k)\text{ for all } k\geq 1$.
\end{enumerate}
\end{prop}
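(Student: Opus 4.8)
The plan is to convert proximality into a statement about the kneading data $c_x(k),c_y(k)$ by exploiting Lemma~\ref{interval-omega-lim-lem}: since $\omega_f(x)$ and $\omega_f(y)$ lie in the same infinite maximal $\omega$-limit set $\omega_0$, both sequences are read off the \emph{same} periodic portion $(J_{k,s_k}^i)_{k\geq1,\,0\le i<2^k}$, and for all large $n$ one has $f^n(x)\in J_{k,s_k}^{\,n-c_x(k)\,(\mathrm{mod}\ 2^k)}$ and $f^n(y)\in J_{k,s_k}^{\,n-c_y(k)\,(\mathrm{mod}\ 2^k)}$. Thus, from some time on, each orbit is pinned to a determined member of a finite family of pairwise disjoint open intervals at every level $k$, and the whole problem reduces to comparing which intervals the two orbits occupy.

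For $(1)\Rightarrow(2)$ I would argue by contraposition. Suppose $c_x(k)\neq c_y(k)\ (\mathrm{mod}\ 2^k)$ for some fixed $k$. Then for every sufficiently large $n$ the residues $n-c_x(k)$ and $n-c_y(k)$ are distinct modulo $2^k$, so $f^n(x)$ and $f^n(y)$ lie in two different members of the family $\{J_{k,s_k}^i\}_{0\le i<2^k}$. These intervals are pairwise disjoint, and by the construction in Remark~\ref{interval-periodic-portion-3} the distance between any two of them is bounded below by some $\delta_k>0$. Hence $|f^n(x)-f^n(y)|\ge\delta_k$ for all large $n$, so $\liminf_{n}|f^n(x)-f^n(y)|\ge\delta_k>0$ and $\langle x,y\rangle$ is not proximal.

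For $(2)\Rightarrow(1)$, assume $c_x(k)=c_y(k)\ (\mathrm{mod}\ 2^k)$ for all $k$. Then for each $k$ and all $n\ge m_k:=\max\{c_x(k),c_y(k)\}$ the points $f^n(x)$ and $f^n(y)$ lie in the \emph{same} interval $J_{k,s_k}^{\,n-c_x(k)\,(\mathrm{mod}\ 2^k)}$, so $|f^n(x)-f^n(y)|$ is at most the diameter of that interval. The delicate point is that these intervals need not shrink to points as $k\to\infty$: by Theorem~\ref{interval-structure-of-limit-set} a nested intersection may be a nondegenerate interval, so a single level gives no control. The remedy is a pigeonhole estimate. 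The $J_k^i$, $0\le i<2^k$, are pairwise disjoint subintervals of $[0,1]$, hence $\sum_i\mathrm{diam}(J_k^i)\le1$ and some index $i_k^{\ast}$ satisfies $\mathrm{diam}(J_k^{i_k^{\ast}})\le 2^{-k}$, whence $\mathrm{diam}(J_{k,s_k}^{i_k^{\ast}})\le 2^{-k}+2s_k\le 2^{-k}+2/k$. Since the orbit of $x$ meets $J_{k,s_k}^{i_k^{\ast}}$ exactly along the progression $n\equiv c_x(k)+i_k^{\ast}\ (\mathrm{mod}\ 2^k)$, there are arbitrarily large $n$ with $f^n(x),f^n(y)\in J_{k,s_k}^{i_k^{\ast}}$, giving $|f^n(x)-f^n(y)|\le 2^{-k}+2/k$. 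Letting $k\to\infty$ yields $\liminf_n|f^n(x)-f^n(y)|=0$, i.e.\ proximality.

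The main obstacle is precisely this converse: because the periodic portion can contain nondegenerate limiting intervals, agreement of the kneading data only forces the two orbits into a common interval whose size is not controlled level by level. The pigeonhole bound $\min_i\mathrm{diam}(J_k^i)\le 2^{-k}$, combined with the cyclic recurrence of the orbit through every index, is exactly what converts the combinatorial equalities $c_x(k)=c_y(k)\ (\mathrm{mod}\ 2^k)$ into the metric conclusion. The only point I would want to pin down carefully in the first direction is the strict positivity of the separation constants $\delta_k$, which rests on the separation property built into the choice of $s_k$ in Remark~\ref{interval-periodic-portion-3}.
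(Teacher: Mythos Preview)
Your proof is correct and follows essentially the same approach as the paper: both directions argue via the same mechanism (separation of distinct level-$k$ intervals for $(1)\Rightarrow(2)$, and existence of arbitrarily small level-$k$ intervals for $(2)\Rightarrow(1)$). Your pigeonhole bound $\min_i\mathrm{diam}(J_k^i)\le 2^{-k}$ makes explicit exactly what the paper sweeps into the phrase ``there exists an appropriate $J_{k,s_k}^i$ such that $\mathrm{diam}(J_{k,s_k}^i)\le\varepsilon$,'' so the arguments coincide.
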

\begin{proof}
Since $\omega_0$ is infinite, $\omega_f(x)$ and $\omega_f(y)$ are
also infinite.  Let $(J_{k, s_k}^i)_{k\geq 1, 0\leq i <2^k}$ be the
periodic portion of $\omega_0$.

$(1)\Rightarrow (2)$. If not, there exists some $k\geq 1$ such that
$c_k(x)\neq c_k(y) (\text{mod }2^k)$. Let $\delta=\min_{0\leq
i<2^k-1}d(J_{k,s_k}^i, J_{k,s_k}^{i+1})>0$. Then it is easy to
verify that $|f^n(x)-f^n(y)|\geq \delta$ for all $n\geq
\max\{c_k(x), c_k(y)\}$. This is a contradiction,
since $\langle x,y \rangle$ is proximal.

$(2)\Rightarrow (1)$. For every $\varepsilon>0$, there exists an
appropriate $J_{k,s_k}^i$ such that $\text{diam}(J_{k,s_k}^i)\leq
\varepsilon$. Without loss of generality, we assume $c_k(x)\geq
c_k(y)$. Then $f^{c_k(x)}(x)$, $f^{c_k(x)}(y)\in J_{k,s_k}^0$ and
$f^{c_k(x)+i}(x)$, $f^{c_k(x)+i}(y)\in J_{k,s_k}^i$. Thus,
$|f^{c_k(x)+i}(x)- f^{c_k(x)+i}(y)|\leq \varepsilon$, which implies
$\langle x,y \rangle $ is proximal since $\varepsilon$ is arbitrary.
\end{proof}

Let $A$ be a subset of $\mathbb N$, we call $A$ has {\em Banach density $1$} if
$$\lim_{n\to \infty}\frac{\#(A\cap E_n)}{\#(E_n)}=1$$
for any sequence $\{E_n\}$ of intervals of positive integer,
where $E_n=\{a_n, a_n+1,\ldots, b_n\}$ and $\lim_{n\to\infty}\#(E_n)=\lim_{n\to\infty}(b_n-a_n+1)=\infty$.

Let $\mathcal F_{bd1}=\{A\subset \mathbb N:\ A$  has Banach density
$1\}$.  A pair $\langle x,y\rangle\in I^2$  is called {\em $\mathcal
F_{bd1}$-proximal} if $\{n\in \mathbb N:\ d(f^n(x),
f^n(y))<\frac{1}{p}\}\in \mathcal F_{bd1}$ for all $p\geq 1$.
Clearly, if  $\langle x,y\rangle \in I^2$ is asymptotic, then
$\langle x,y\rangle$ is $\mathcal F_{bd1}$-proximal.
\begin{prop}\label{interval-bd1-proximal}
Let $f\in C(I,I)$  with $h(f)=0$ and $x,y\in I$. If $\langle
x,y\rangle$ is  proximal, then $\langle x,y\rangle $ is  $\mathcal
F_{bd1}$-proximal.
\end{prop}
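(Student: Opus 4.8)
The plan is to split according to whether the common maximal $\omega$-limit set is finite or infinite. Since $\langle x,y\rangle$ is proximal, the discussion preceding Definition \ref{interval-defn-ckx} shows that $\omega_f(x)$ and $\omega_f(y)$ lie in a single maximal $\omega$-limit set $\omega_0$. If $\omega_0$ is finite it is a periodic orbit $P$, and both $\omega_f(x)$ and $\omega_f(y)$, being nonempty invariant subsets of $P$, must equal $P$; thus $x$ and $y$ are asymptotically periodic with the same period $m=\#P$. Here I would argue that proximality forces the two orbits to eventually track the same point of $P$: each orbit has a well-defined eventual phase (once $f^n(x)$ enters a small ball around some point of $P$, continuity and the isolation of the points of $P$ pin down $f^{n+1}(x)$ in the next ball), and if the two phases differed then $|f^n(x)-f^n(y)|$ would be bounded below by roughly the minimal gap between distinct points of $P$ for all large $n$, contradicting $\liminf|f^n(x)-f^n(y)|=0$. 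Hence the phases agree, $\langle x,y\rangle$ is asymptotic, and asymptotic pairs are $\mathcal F_{bd1}$-proximal as already noted in the text.

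It remains to treat the case $\omega_0$ infinite, where I would use the kneading-sequence machinery. Let $(J_{k,s_k}^i)_{k\ge1,\,0\le i<2^k}$ be the periodic portion of $\omega_0$. By Proposition \ref{interval-prox-eq}, proximality is equivalent to $c_x(k)\equiv c_y(k)\pmod{2^k}$ for every $k\ge1$. Fix such a $k$ and set $N_k=\max\{c_x(k),c_y(k)\}$. For $n\ge N_k$, Definition \ref{interval-defn-ckx} places $f^n(x)$ in $J_{k,s_k}^{\,n-c_x(k)\ (\mathrm{mod}\ 2^k)}$ and $f^n(y)$ in $J_{k,s_k}^{\,n-c_y(k)\ (\mathrm{mod}\ 2^k)}$; since $c_x(k)\equiv c_y(k)\pmod{2^k}$, these are the \emph{same} interval, whence $|f^n(x)-f^n(y)|\le\operatorname{diam} J_{k,s_k}^{\,i(n)}$ with $i(n)=n-c_x(k)\ (\mathrm{mod}\ 2^k)$.

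Now fix $p\ge1$ and estimate the upper Banach density of $S=\{n:|f^n(x)-f^n(y)|\ge 1/p\}$. Let $B_k=\{i:\operatorname{diam}J_{k,s_k}^i\ge 1/p\}$. By Remark \ref{interval-periodic-portion-3} the intervals $J_{k,s_k}^i$, $0\le i<2^k$, are pairwise disjoint and all contained in a common interval of length at most $3$, so $\sum_i\operatorname{diam}J_{k,s_k}^i\le 3$, which forces $\#B_k\le 3p$ independently of $k$. By the previous paragraph, up to the finite set $\{n<N_k\}$ the set $S$ is contained in $\{n:i(n)\in B_k\}$, which is a union of $\#B_k\le 3p$ residue classes modulo $2^k$ and therefore has upper Banach density $\#B_k/2^k\le 3p/2^k$. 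Since finite sets do not affect upper Banach density and this bound holds for every $k$, letting $k\to\infty$ gives that $S$ has upper Banach density $0$. Equivalently $\{n:d(f^n(x),f^n(y))<1/p\}$ has Banach density $1$; as $p$ was arbitrary, $\langle x,y\rangle$ is $\mathcal F_{bd1}$-proximal.

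The main obstacle, and the crux of the argument, is the density estimate in the last paragraph: one must convert the geometric fact that only boundedly many intervals of the periodic portion can be large (a consequence of their pairwise disjointness inside a bounded interval) into a statement about Banach density. The residue-class structure of the times at which $f^n(x)$ and $f^n(y)$ occupy a fixed interval is what makes this conversion exact, and letting $k$ grow is essential, since no single level $k$ yields density zero — only the uniform bound $\#B_k\le 3p$ valid across all $k$ does.
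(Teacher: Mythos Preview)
Your proof is correct and follows essentially the same approach as the paper: split into the finite and infinite cases, and in the infinite case use the periodic-portion structure together with the pairwise disjointness of the intervals $J_{k,s_k}^i$ to bound the number of ``large'' intervals by a constant depending only on $p$, then exploit the residue-class structure of the visiting times and let $k\to\infty$. Your bound $\#B_k\le 3p$ is looser than the paper's $\#B_k\le p$ (the intervals lie in $I=[0,1]$, so the sum of their diameters is at most $1$), and you phrase the estimate via the upper Banach density of the complement rather than directly, but these are cosmetic differences.
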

\begin{proof}
If $\omega_f(x)$ is finite, then it is easy to see that $\langle x,y\rangle$ is
asymptotic. Now assume that $\omega_f(x)$ is infinite, then so is
$\omega_f(y)$. Let $\omega_0$  be the maximal $\omega$-limit set
which contains both $\omega_f(x)$ and $\omega_f(y)$, then $\omega_0$
is also infinite.  Let $(J_{k, s_k}^i)_{k\geq 1, 0\leq i <2^k}$ be
the periodic portion of $\omega_0$.

Fix $p\geq 1$ and $k\geq 1$. Without loss of generality, assume
$c_k(x)\geq c_k(y)$.   Since the intervals $(J_{k,s_k}^i)_{0\leq i<2^k}$
are pairwise disjoint, there are at most $p$ distinct sets
$J_{k,s_k}^i$ with $\text{diam}(J_{k,s_k}^i)\geq 1/p $. Let
$\{E_n\}$ be a sequence of intervals of positive integer and
$\lim_{n\to\infty}\#(E_n)=\infty$. Then
\begin{align*}
\frac{\#\left\{i\in E_n: |f^i(x)- f^i(y)|< \frac{1}{p}\right\}}{\#(E_n)} &\geq
\frac{ \#\left\{i\in E_n: \text{diam}( J_{k,s_k}^{i(\text{mod }2^k)})< \frac{1}{p}\right\}-c_x(k)}{\#(E_n)}\\
& \geq 1-\frac{\left(\frac{\#(E_n)}{2^k}+2\right)p +c_x(k)}{\#(E_n)}\\
& \to 1-\frac{p}{2^k},\qquad \text{as }n\to \infty.
\end{align*}
Thus,
$$\lim_{n\to\infty}\frac{\#\left\{i\in E_n: |f^i(x)-f^i(y)|< \frac{1}{p}\right\}}{\#(E_n)}=1$$
since $k$ is arbitrary. Therefore, $\langle x,y\rangle$ is $\mathcal F_{bd1}$-proximal since $p$ and $\{E_n\}$ are  arbitrary.
\end{proof}

\begin{thm} \label{intevla-proximal-equivalence}
Let $f\in C(I,I)$ with $h(f)=0$. Then the proximal relation $Prox_f=\{\,\langle x, y\rangle \in I^2: \langle x,y \rangle \text{ is proximal}\,\}$ is an equivalence relation.
\end{thm}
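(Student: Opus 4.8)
The plan is to verify the three defining properties of an equivalence relation, of which only transitivity carries any content. Reflexivity is immediate since $\liminf_{n\to\infty}|f^n(x)-f^n(x)|=0$, and symmetry is clear because $|f^n(x)-f^n(y)|=|f^n(y)-f^n(x)|$. So I would reduce everything to the following claim: if $\langle x,y\rangle$ and $\langle y,z\rangle$ are both proximal, then $\langle x,z\rangle$ is proximal. The natural move is to split on the size of $\omega_f(y)$, the $\omega$-limit set of the pivot point $y$.

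First I would dispose of the case where $\omega_f(y)$ is finite. As already observed in the proof of Proposition \ref{interval-bd1-proximal}, when one coordinate of a proximal pair has a finite $\omega$-limit set the pair is in fact asymptotic; by symmetry, proximality of $\langle x,y\rangle$ together with finiteness of $\omega_f(y)$ forces $\lim_{n\to\infty}|f^n(x)-f^n(y)|=0$, and likewise $\lim_{n\to\infty}|f^n(y)-f^n(z)|=0$. The triangle inequality then gives $\lim_{n\to\infty}|f^n(x)-f^n(z)|=0$, so $\langle x,z\rangle$ is asymptotic, hence proximal.

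The substantive case is $\omega_f(y)$ infinite, and here the plan is to route everything through the kneading characterization of Proposition \ref{interval-prox-eq}. Since $\langle x,y\rangle$ is proximal we have $\omega_f(x)\cap\omega_f(y)\neq\emptyset$, so the maximal $\omega$-limit sets containing $\omega_f(x)$ and $\omega_f(y)$ are not disjoint and hence coincide by Lemma \ref{interval-maximal-omega-set}; call this common set $\omega_0$, which is infinite because it contains $\omega_f(y)$. The same argument applied to $\langle y,z\rangle$ places $\omega_f(z)$ inside $\omega_0$ as well. Moreover $\omega_f(x)$ and $\omega_f(z)$ are themselves infinite: were $\omega_f(x)$ finite, the finite case above would make $\langle x,y\rangle$ asymptotic and force $\omega_f(x)=\omega_f(y)$, contradicting finiteness. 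Thus all three points have infinite $\omega$-limit sets sitting in the single infinite maximal set $\omega_0$, so the kneading sequences $c_x,c_y,c_z$ of Definition \ref{interval-defn-ckx} are all defined relative to the same periodic portion of $\omega_0$.

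With the setup in place the conclusion is almost formal: Proposition \ref{interval-prox-eq} gives $c_x(k)\equiv c_y(k)\ (\mathrm{mod}\ 2^k)$ and $c_y(k)\equiv c_z(k)\ (\mathrm{mod}\ 2^k)$ for all $k\geq 1$, whence $c_x(k)\equiv c_z(k)\ (\mathrm{mod}\ 2^k)$ by transitivity of congruence modulo $2^k$; applying Proposition \ref{interval-prox-eq} once more (valid since $\omega_f(x),\omega_f(z)\subset\omega_0$ infinite) shows $\langle x,z\rangle$ is proximal. I expect the main obstacle to be organizational rather than computational: the real work is checking that the hypotheses of Proposition \ref{interval-prox-eq} genuinely hold for the derived pair $\langle x,z\rangle$, i.e. confirming that $x$, $y$, and $z$ all lie over one infinite maximal $\omega$-limit set and that the finite case cannot sneak in. Once that bookkeeping is done, the transitivity of equality modulo $2^k$ does the rest.
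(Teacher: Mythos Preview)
Your proof is correct, but it takes a different route from the paper's. The paper's proof is a two-line application of Proposition \ref{interval-bd1-proximal}: since every proximal pair is $\mathcal F_{bd1}$-proximal, and the intersection of two Banach density~$1$ sets again has Banach density~$1$, transitivity follows immediately from the triangle inequality without any case split. You instead bypass the Banach density upgrade and work directly with the kneading characterization of Proposition \ref{interval-prox-eq}, handling the finite-$\omega$-limit case separately and then using transitivity of congruence modulo $2^k$ in the infinite case.

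Both arguments are valid and rest on the same underlying structure (the periodic portion), but they trade off differently. The paper's route is slicker and more conceptual: it extracts a stronger intermediate fact (proximal $\Rightarrow$ $\mathcal F_{bd1}$-proximal) from which transitivity is a soft general consequence, with no need to verify that $x$ and $z$ sit over the same infinite maximal $\omega$-limit set. Your route is more hands-on and avoids introducing Banach density entirely, at the cost of the bookkeeping you flag at the end (checking that $\omega_f(x)$ and $\omega_f(z)$ are infinite and lie in the same maximal $\omega_0$). That bookkeeping is done correctly.
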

\begin{proof}
The reflexivity and symmetry of $Prox_f$ is obviously. The
transitivity of $Prox_f$  is followed by Proposition
\ref{interval-bd1-proximal} and the property that the intersection
of two  Banach density $1$ sets also has Banach density $1$.
\end{proof}

For every $x\in I$, we define the
{\em proximal cell} of $x$ as $Prox_f(x)=\{y\in I: \langle
x,y\rangle$ is proximal$\}$. It is easy to see that $\{Prox_f(x): x\in
I\}$ is a partition of $I$.

\begin{prop}\label{interval-proximal-class}
Let $f\in C(I,I)$ with $h(f)=0$. If $Prox_f(x), Prox_f(y)$ are two
different  proximal cells, then there exists
$\delta>0$, such that
$$\liminf_{n\to \infty} d(f^n(u), f^n(v))\geq \delta$$
for all $u\in Prox_f(x), v\in Prox_f(y)$.
\end{prop}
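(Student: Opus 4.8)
The plan is to attach to each proximal cell a single closed $f$-invariant set describing the asymptotic location of the whole cell, and then to separate two distinct cells either by the distance between these sets or, when they coincide, by a combinatorial invariant. For a point $w$ set $\Omega(w)=\omega_f(w)$ when this set is finite — it is then a periodic orbit by Theorem \ref{interval-zero-entropy}(3) — and let $\Omega(w)$ be the unique maximal $\omega$-limit set containing $\omega_f(w)$ when $\omega_f(w)$ is infinite. The first step is to check that $\Omega$ is constant on proximal cells and that $\omega_f(u)\subset\Omega(x)$ for every $u\in Prox_f(x)$: if $\langle x,u\rangle$ is proximal then $\omega_f(x)\cap\omega_f(u)\neq\emptyset$, and an infinite $\omega$-limit set contains no periodic point (else a periodic orbit would be properly contained in it, contradicting Theorem \ref{interval-zero-entropy}(3)), so the two $\omega$-limit sets have the same type; in the finite case they are equal periodic orbits, and in the infinite case their maximal $\omega$-limit sets coincide by Lemma \ref{interval-maximal-omega-set}. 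The same reasoning yields the dichotomy that $\Omega(x)$ and $\Omega(y)$ are either disjoint or equal.

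Since $Prox_f(x)\neq Prox_f(y)$, the pair $\langle x,y\rangle$ is not proximal, and I treat the two alternatives separately. If $\Omega(x)$ and $\Omega(y)$ are disjoint, they are disjoint closed subsets of $I$, so $\delta_0:=d(\Omega(x),\Omega(y))>0$ depends only on the two cells. For any $u\in Prox_f(x)$ and $v\in Prox_f(y)$ one has $d(f^n(u),\Omega(x))\to 0$ and $d(f^n(v),\Omega(y))\to 0$ because $\omega_f(u)\subset\Omega(x)$ and $\omega_f(v)\subset\Omega(y)$; hence $d(f^n(u),f^n(v))\ge\delta_0-d(f^n(u),\Omega(x))-d(f^n(v),\Omega(y))$, so $\liminf_{n\to\infty}d(f^n(u),f^n(v))\ge\delta_0$, a uniform bound.

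Suppose instead $\Omega(x)=\Omega(y)=:\Omega$. If $\Omega$ is a periodic orbit $\{p_0,\dots,p_{q-1}\}$, then once an orbit with $\omega$-limit set $\Omega$ enters a small neighbourhood of $\Omega$ it tracks a fixed residue modulo $q$, two such points being proximal exactly when these residues agree; as $\langle x,y\rangle$ is not proximal the residues of $x$ and $y$ differ, and since $u$ and $v$ are asymptotic to $x$ and $y$ respectively (proximal pairs with a finite $\omega$-limit set are asymptotic, as observed in the proof of Proposition \ref{interval-bd1-proximal}) they inherit those residues, giving $\liminf_{n}d(f^n(u),f^n(v))\ge\tfrac12\min_{i\neq j}|p_i-p_j|$. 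If $\Omega$ is infinite I use the invariants of Definition \ref{interval-defn-ckx}: by Proposition \ref{interval-prox-eq} non-proximality of $\langle x,y\rangle$ yields some $k$ with $c_x(k)\neq c_y(k)\ (\mathrm{mod}\ 2^k)$, while the same proposition applied to the proximal pairs $\langle x,u\rangle$ and $\langle y,v\rangle$ gives $c_u(k)=c_x(k)$ and $c_v(k)=c_y(k)\ (\mathrm{mod}\ 2^k)$. Then for all large $n$ the points $f^n(u)$ and $f^n(v)$ lie in distinct pieces $J_{k,s_k}^{\,n-c_u(k)}$ and $J_{k,s_k}^{\,n-c_v(k)}$ of the periodic portion, and since the $2^k$ closed intervals $J_k^i$ are pairwise disjoint (Proposition \ref{interval-periodic-portion-2}) their $s_k$-neighbourhoods are separated by a constant $\delta>0$ depending only on $k$, so $\liminf_n d(f^n(u),f^n(v))\ge\delta$ uniformly.

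The hardest part is the infinite equal case. There the separation cannot come from any distance between limit sets and must instead be extracted from the combinatorial fact that $c_x(k)\neq c_y(k)$; what makes the resulting $\delta$ uniform over the whole cells, rather than valid only for the base points $x,y$, is that $c_{\cdot}(k)$ is constant on each proximal cell, which is precisely the content of Proposition \ref{interval-prox-eq}. The supporting subtlety, needed before $c_u(k)$ is even defined, is the verification that every member of a cell has its $\omega$-limit set inside the common infinite maximal $\omega$-limit set; this is where zero entropy enters, through Theorem \ref{interval-zero-entropy}(3) and Lemma \ref{interval-maximal-omega-set}.
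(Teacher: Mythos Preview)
Your proof is correct and follows essentially the same route as the paper: split into the cases where the maximal $\omega$-limit sets of the two cells are disjoint versus equal, and in the equal case further split according to whether that common set is a periodic orbit or infinite, using the invariants $c_{\,\cdot}(k)$ and Proposition \ref{interval-prox-eq} in the infinite subcase. If anything you are more careful than the paper, explicitly verifying that $\Omega(\cdot)$ and $c_{\,\cdot}(k)$ are constant on a proximal cell before using them---a point the paper's proof uses tacitly.
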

\begin{proof}
Let $\omega_1$, $\omega_2$ be two maximal $\omega$-limit sets which contain $\omega_f(x)$ and $\omega_f(y)$ respectively.
If $\omega_1\neq \omega_2$, then $\omega_1\cap \omega_2=\emptyset$.
Let $\delta=d(\omega_1, \omega_2)>0$. Then $\liminf_{n\to \infty} d(f^n(u), f^n(v))\geq \delta$ for all $u\in Prox_f(x), v\in Prox_f(y)$.

If $\omega_1=\omega_2$, then there are two cases.

Case 1. $\omega_1$ is a periodic orbit. Then there exist two
periodic points $p_1$ and  $ p_2$, such that
$\lim_{n\to\infty}|f^n(x)-f^n(p_1)|=0$ and
$\lim_{n\to\infty}|f^n(y)-f^n(p_2)|=0$. Since $ \langle x,y\rangle $
is not proximal, one has $p_1\neq p_2$. Let $\delta=\min_{n\geq
0}|f^n(p_1)-f^n(p_2)|>0$. Then $\liminf_{n\to \infty} d(f^n(u),
f^n(v))\geq \delta$ for all $u\in Prox_f(x), v\in Prox_f(y)$.

Case 2. $\omega_1$ is infinite. Let $(J_{k, s_k}^i)_{k\geq 1, 0\leq
i <2^k}$ be the periodic  portion of $\omega_1$. Since $\langle
x,y\rangle$ is not proximal, there exists some $k\geq 1$ such that
$c_x(k) \neq c_y(k)\ (\text{mod }2^k)$. Let
$\delta=\frac{1}{2}\min\{d(J_{k,s_k}^i, J_{k,s_k}^{i+1}):\ 0\leq
i<2^k-1\}>0$. Then $\liminf_{n\to \infty} d(f^n(u), f^n(v))\allowbreak\geq
\delta$ for all $u\in Prox_f(x), v\in Prox_f(y)$.
\end{proof}

\begin{rem} (1). It has been shown that the proximal relation is an equivalence relation for
a zero entropy interval map, but it may not be closed in $I\times I$. For example, let $f(x)=x^2$,
then $\langle 0, x\rangle $ is proximal for all $x\in [0,1)$, but $\langle 0, 1\rangle$ is not proximal.

(2). Recall that a pair $\langle x, y\rangle\in I^2$ is called {\em regionally proximal}, if for every
$\varepsilon>0$, there exist $u, v$ and $n\geq 1$ such that $|x-u|<\varepsilon, |y-v|<\varepsilon$
and $|f^n(u)-f^n(v)|<\varepsilon$. Obviously, every proximal pair is regionally proximal.
Let $Q_f$ denote the set of regionally proximal pairs. It is not hard to verify that
$$Q_f=\bigcap_{k=1}^{\infty}\overline{\bigcup_{n=k}^{\infty}(f\times f)^{-n}V_k}, $$
where $V_k=\{\langle x,y\rangle \in I^2:\ |x-y|<1/k\}$. Then $Q_f$ is a closed subset
of $I^2$, but it may not be an equivalence relation. For example, let $f(x)=2x^2$ on $[0,1/2]$
and $2(x-1/2)^2+1/2$ on $[1/2,1]$, then $\langle 0, 1/2 \rangle$ and $\langle 1/2, 1\rangle $
are regionally proximal, but $\langle 0, 1\rangle$ is not regionally proximal.
\end{rem}

\subsection{$f$-nonseparable pair}
\begin{defn}
Let $f\in C(I,I)$ with $h(f)=0$. A proper pair $\langle u,v\rangle \in I^2$ is called
{\em $f$-nonseparable}, if there exists some $x\in I$ with infinite $\omega_f(x)$ and let
$(J_k^i)_{k\geq 1, 0\leq i <2^k}$ be the periodic portion of $\omega_f(x)$, such that
there exists a nested sequence $J_1^{i_1}\supset J_2^{i_2}\supset \cdots$ satisfying
$\bigcap_{k=1}^\infty J_k^{i_k}=[u,v]\text{ or }[v,u]$.
\end{defn}

The definition of $f$-nonseparability first appeared in \cite{S86}. Our definition is
slightly different from the origin one, but it is easy to see that they are equivalent.

\begin{lem}
Let $f\in C(I,I)$ with $h(f)=0$ and $u<v\in I$. If $\langle u,v\rangle$ is $f$-nonseparable, then
\begin{enumerate}
  \item $[u,v]$ is wandering, i.e. $f^n([u,v])\cap [u,v]=\emptyset$ for all $n\geq 1$,
  \item $u,v$ are not the endpoints of $I$, i.e. \@$u,v\in (0,1)$.
\end{enumerate}
\end{lem}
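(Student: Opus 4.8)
Fix the point $x$ witnessing $f$-nonseparability, so that $\omega_f(x)$ is infinite with periodic portion $(J_k^i)$ and $[u,v]=\bigcap_k J_k^{i_k}$ for a nested sequence. I will use two facts from Theorem \ref{interval-structure-of-limit-set}: first $[u,v]=\bigcap_k J_k^{i_k}\subseteq\bigcap_k\bigcup_i J_k^i=C(x)$, so $f([u,v])\subseteq f(C(x))=C(x)$ and $[u,v]\cap Per(f)=\emptyset$; second, since $u<v$ we are in case (2) or (3), so $[u,v]\cap\omega_f(x)=\{u,v\}$ and in particular $u,v\in\omega_f(x)$.

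\textbf{Part (1).} Suppose $f^n([u,v])\cap[u,v]\neq\emptyset$ for some $n\geq 1$, and choose $k$ with $2^k>n$. Since $u,v\in\omega_f(x)\cap J_k^{i_k}$ and $f^n(\omega_f(x)\cap J_k^{i_k})=\omega_f(x)\cap J_k^{(i_k+n)\bmod 2^k}$ (the identity established in the proof of Lemma \ref{interval-omega-lim-lem}), while $0<n<2^k$ forces $(i_k+n)\bmod 2^k\neq i_k$, the images $f^n(u),f^n(v)$ lie in $J_k^{(i_k+n)\bmod 2^k}$, which is disjoint from $J_k^{i_k}\supseteq[u,v]$ by Proposition \ref{interval-periodic-portion-2}(2). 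By Lemma \ref{interval-key-lemma} there is a periodic point $p$ strictly between $J_k^{i_k}$ and $J_k^{(i_k+n)\bmod 2^k}$. Now $f^n([u,v])$ is an interval that meets $[u,v]\subseteq J_k^{i_k}$ and contains $f^n(u)\in J_k^{(i_k+n)\bmod 2^k}$, so it must contain the separating point $p$; hence $p=f^n(w)$ for some $w\in[u,v]\subseteq C(x)$, giving $p\in f^n(C(x))=C(x)$ and contradicting $C(x)\cap Per(f)=\emptyset$. Thus $[u,v]$ is wandering.

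\textbf{Part (2), reduction.} Suppose for contradiction $u=0$ (the case $v=1$ is symmetric). Since $[u,v]\subseteq J_0:=[\min\omega_f(x),\max\omega_f(x)]$ we get $0=u=\min\omega_f(x)$, and $[u,v]\cap\omega_f(x)=\{0,v\}$ shows $\omega_f(x)\cap[0,v)=\{0\}$, i.e.\ $0$ is \emph{isolated} in $\omega_f(x)$. By Part (1), $f^n([0,v])\subseteq(v,1]$ for all $n\geq1$, so the whole forward orbit of $0$ stays in $(v,1]$ and $f(v)>v$. If $0$ were recurrent, applying the definition to the neighbourhood $[0,v)$ would give $n\geq1$ with $f^n(0)\in[0,v)\cap\omega_f(x)=\{0\}$, so $0\in Per(f)$, impossible since $0\in C(x)$. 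Hence $0$ is not recurrent, and by Theorem \ref{interval-structure-of-limit-set} we are in case (3) with $0$ the non-recurrent endpoint and $v$ regularly recurrent. This argument already settles case (2) entirely, since there both endpoints are recurrent.

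\textbf{Part (2), the crux.} It remains to exclude the non-recurrent extreme endpoint, and this is where I expect the real difficulty. The plan is to use $0\in\omega_f(x)$: pick $n_i\to\infty$ with $f^{n_i}(x)\to0$ and pass to a limit $z^{*}$ of the predecessors $f^{n_i-1}(x)$, so $z^{*}\in\omega_f(x)$ and $f(z^{*})=0$; then $z^{*}\neq0$ (else $0$ is fixed), $z^{*}\neq v$ (else $f(v)=0<v$), and $\omega_f(x)\cap(0,v)=\emptyset$ force $z^{*}>v$. Because the factor map $\phi$ onto the adding machine in Theorem \ref{interval-adding-machine} is at most two-to-one (if $f(a)=f(b)$ then $\tau\phi(a)=\tau\phi(b)$, so $\phi(a)=\phi(b)$ and $a,b$ share a fibre of size $\leq2$), only finitely many points of $\omega_f(x)$ map to $0$; hence $z^{*}$ must itself be isolated, for otherwise nearby points of $\omega_f(x)$ would map to points near $0$ but distinct from it, violating the isolation of $0$. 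Iterating yields an infinite backward orbit $0=z_0,z_1=z^{*},z_2,\dots$ of distinct isolated, non-recurrent points with $f(z_{m+1})=z_m$. The remaining task is to turn this into a contradiction: the component $K_{-1}$ of $C(x)$ containing $z^{*}$ is then a nondegenerate gap with $f(K_{-1})=[0,v]$, and Lemma \ref{interval-key-lemma} supplies a periodic point between $[0,v]$ and $K_{-1}$; I would try to combine this covering relation with that periodic point to manufacture either a periodic point inside the periodic-point-free set $[0,v)$ or a periodic orbit whose period is not a power of $2$, contradicting Theorem \ref{interval-zero-entropy}. Identifying precisely which two subintervals overlap under iteration so as to force such a periodic point is, I believe, the main obstacle in the whole argument.
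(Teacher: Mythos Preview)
Your Part (1) is correct and in fact spells out what the paper dismisses as ``easy to see from Theorem \ref{interval-structure-of-limit-set}''.

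Your Part (2), however, is incomplete by your own admission, and you have taken a much harder road than necessary. The missing idea is a single topological observation: if $u=0$, then $[0,v)$ is an \emph{open} set in $I=[0,1]$ (the endpoint $0$ is a boundary point of the ambient space). Since $0\in[0,v)\cap\omega_f(x)$, Lemma \ref{interval-Ruette-lemma}(2) applies directly and yields an $m\geq 0$ with $f^m([0,v))\cap Per(f)\neq\emptyset$. But $[0,v)\subset[u,v]\subset C(x)$ and $f(C(x))=C(x)$, so $f^m([0,v))\subset C(x)$, contradicting $C(x)\cap Per(f)=\emptyset$. That is the entire argument in the paper.

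Your elaborate programme of backward orbits, isolated preimages, and the two-to-one factor map is not wrong in spirit, but it is fighting the problem at the wrong level: you are trying to manufacture a periodic point by hand from covering relations, when Lemma \ref{interval-Ruette-lemma} already packages exactly that mechanism. The case split into (2) versus (3) of Theorem \ref{interval-structure-of-limit-set}, and the analysis of whether the endpoint is recurrent, are irrelevant here---the argument works uniformly because it only uses $u\in\omega_f(x)$ and $[u,v]\subset C(x)$.
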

\begin{proof}
(1) By the structure of $\omega$-limit set (see Theorem \ref{interval-structure-of-limit-set}),
it is easy to see that  $[u,v]$ is wandering.

(2) Let $u,v \in \omega_f(x)$. Then $f^n([u,v])\cap Per(f)=\emptyset$ for all $n\geq 0$ since
$[u,v]\in C(x)$. If $u=0$, then $[u,v)$ is open in $I$. By Lemma \ref{interval-Ruette-lemma}
there exists some $m\geq 0$ such that  $f^m([u,v))\cap Per(f)\neq \emptyset$. This is a contradiction.
Thus, $u>0$ and one can show $v<1$ similarly.
\end{proof}

A point $x\in I$ is called an {\em eventually periodic point},
if there exists some $n\geq 0$ such that $f^n(x)$ is
a periodic point. Denote $EP(f)$ by the set of eventually periodic points. Then
$\overline{EP(f)}\supset \bigcup_{x\in I}\omega_f(x)$. Moreover,
if $u<v\in I$ and  $\langle u,v\rangle$ is $f$-nonseparable, then
$[u,v]\cap EP(f)=\emptyset$. But $\{u,v\}\subset
\overline{EP(f)}$, i.e. $u$ is a limit point of $EP(f)$ from the
left, $v$ is a limit point of $EP(f)$ from the right. Therefore,
for every $x\in I$ there is at most one point such that they can
form a $f$-nonseparable pair.

\begin{prop}
Let $f\in C(I,I)$ with $h(f)=0$ and $x\in I$. Suppose that $\omega_f(x)$ is infinite and  $u, v \in \omega_f(x)$.
\begin{enumerate}
\item If $\langle u,v\rangle$ is $f$-nonseparable, then so is $\langle f(u), f(v)\rangle $.
\item There exists a unique pair $\langle y,z \rangle \in \omega_f(x)\times \omega_f(x)$
such that $f(y)=u, f(z)=v$. Moreover, if $\langle u,v\rangle$ is $f$-nonseparable, then so is $\langle y,z\rangle$.
\end{enumerate}
\end{prop}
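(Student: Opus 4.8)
My plan is to run everything through the periodic portion $(J_k^i)_{k\ge1,\,0\le i<2^k}$ of $\omega_f(x)$ together with the transport rule $f(\omega_f(x)\cap J_k^i)=\omega_f(x)\cap J_k^{i+1\,(\mathrm{mod}\,2^k)}$ extracted from the proof of Lemma~\ref{interval-omega-lim-lem}. For $w\in\omega_f(x)$ let $i_k(w)$ be the unique index with $w\in J_k^{i_k(w)}$; by disjointness and nesting (Proposition~\ref{interval-periodic-portion-2}) one has $i_{k+1}(w)\equiv i_k(w)\,(\mathrm{mod}\,2^k)$, and $\bigcap_k J_k^{i_k(w)}$ is either $\{w\}$ or the gap carrying $w$ (Theorem~\ref{interval-structure-of-limit-set}). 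For Part~(1) I would note that $u,v$ share one index sequence, $i_k(u)=i_k(v)=:i_k$, apply the transport rule to get $f(u),f(v)\in J_k^{i_k+1}$ for every $k$, and check by a one-line computation that $j_k:=i_k+1\,(\mathrm{mod}\,2^k)$ is again nested. Using $f(\bigcap_kJ_k^{i_k})=\bigcap_kf(J_k^{i_k})\supseteq\bigcap_kJ_k^{j_k}=:K'$ for decreasing compacta, together with $f(u),f(v)\in K'$, reduces Part~(1) to knowing $f(u)\neq f(v)$: then $K'$ meets $\omega_f(x)$ in two points, so $K'$ is the gap with endpoints $f(u),f(v)$ and is witnessed by $(J_k^{j_k})_k$, i.e. $\langle f(u),f(v)\rangle$ is $f$-nonseparable. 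Thus Part~(1) rests on the injectivity proved for Part~(2).

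For Part~(2), existence is immediate from $f(\omega_f(x))=\omega_f(x)$. The content is uniqueness, which is exactly injectivity of $f$ on $\omega_f(x)$: by the transport rule any $\omega_f(x)$-preimage of a point $w$ has the forced index sequence $i_k(w)-1\,(\mathrm{mod}\,2^k)$, so all such preimages lie in one block $K_w^-:=\bigcap_kJ_k^{i_k(w)-1}$. If $K_w^-$ is a single point we are done. If $K_w^-=[y_1,y_2]$ is a gap and the fiber $\bigcap_kJ_k^{i_k(w)}$ of $w$ is also a gap $[w,w']$, a counting step settles it: $w'$ shares the index sequence of $w$, so it too has all of its preimages in $\{y_1,y_2\}$, forcing $\{f(y_1),f(y_2)\}=\{w,w'\}$ and hence a single preimage of $w$. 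The one surviving configuration is $K_w^-$ a gap while $\bigcap_kJ_k^{i_k(w)}=\{w\}$; writing out $f([y_1,y_2])\supseteq\bigcap_kJ_k^{i_k(w)}=\{w\}$ and $f([y_1,y_2])\subseteq C(x)$ (as $[y_1,y_2]\subset C(x)$ and $f(C(x))=C(x)$), one sees that a nondegenerate image would force $w$ to be a gap endpoint, contrary to $\bigcap_kJ_k^{i_k(w)}=\{w\}$; hence necessarily $f\equiv w$ on $[y_1,y_2]$. So uniqueness is equivalent to: \emph{$f$ is never constant on a gap.}

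The main obstacle is precisely this last claim. When the gap $[y_1,y_2]$ is of type~(2) in Theorem~\ref{interval-structure-of-limit-set} I can dispose of it cleanly: both endpoints are then strongly recurrent, hence lie in a minimal subset of $\omega_f(x)$; since such a subset is unique and is itself an infinite $\omega$-limit set $M=\omega_f(m)$, Theorem~\ref{interval-adding-machine} makes $\phi|_M$ a homeomorphism, so $f|_M$ is injective and $f(y_1)\neq f(y_2)$. The delicate case is a type~(3) gap, where only one endpoint is regularly recurrent and the other is not recurrent, so the minimal-set argument does not apply directly. Here I would analyze $f$ near the gap using the self-covering $f^{2^k}(J_k^{i_k(y_1)})\supseteq J_k^{i_k(y_1)}\downarrow[y_1,y_2]$: constancy on the sub-gap forces the covering to be realised by the two shrinking slivers of $J_k^{i_k(y_1)}\setminus[y_1,y_2]$, producing expansion at every scale that I would try to convert into positive entropy, contradicting $h(f)=0$. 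I expect this conversion to be the hardest point, since the entropy-zero ``swap'' geometry of the periodic portion makes a naive horseshoe unavailable and one must exploit the non-recurrence of the free endpoint rather than a single finite-time folding.

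Finally, the ``moreover'' clause is the easy direction and needs only existence: if $\langle u,v\rangle$ is $f$-nonseparable with $[u,v]=\bigcap_kJ_k^{i_k}$, then $u,v$ share the sequence $(i_k)_k$, so the preimages $y,z$ share $(i_k-1)_k$ and lie in $K:=\bigcap_kJ_k^{i_k-1}$. Since $f(K)\supseteq\bigcap_kJ_k^{i_k}=[u,v]$ is nondegenerate, $K$ cannot be a single point; thus $K=[y,z]$ is a gap and $\langle y,z\rangle$ is $f$-nonseparable by definition.
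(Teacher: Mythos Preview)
Your approach is essentially the paper's: both rest on Theorems~\ref{interval-structure-of-limit-set} and~\ref{interval-adding-machine}, with you unpacking in detail what the one-line citation is pointing at. The reduction of both parts to bijectivity of $f|_{\omega_f(x)}$, the counting step when a $2$-fiber is followed by a $2$-fiber, and the ``moreover'' clause are all correct and are exactly what the adding-machine semiconjugacy $\phi:\omega_f(x)\to J$ delivers.

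The genuine gap is the one you yourself flag: ruling out that a type-(3) gap $[y_1,y_2]$ is collapsed by $f$ to a single point (equivalently, that some $2$-point fiber is followed by a singleton fiber). Your proposed entropy argument is not carried out, and the obstacle you name is real: the coverings $f^{2^k}(J_k^{i_k})\supseteq J_k^{i_k}$ live at different iterates as $k$ grows, so the sliver expansion you describe does not directly assemble into a fixed-iterate horseshoe, and no contradiction with $h(f)=0$ is visible. The paper's bare citation does not make explicit how this step is handled either; the intended shortcut is presumably that the Block--Coppel analysis underlying Theorems~\ref{interval-structure-of-limit-set} and~\ref{interval-adding-machine} already shows that $f$ carries each gap of $C(x)$ onto the next gap, which is precisely the forward invariance of the set of $2$-fibers and closes the argument without any entropy estimate. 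So your proposal has located exactly where an external input from the cited sources is needed, but the entropy route you sketch is unlikely to be the right tool.

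A minor remark on your type-(2) paragraph: the argument is sound but in fact shows that case~(2) is vacuous. Once you place both strongly recurrent endpoints in the unique minimal subset $M\subset\omega_f(x)$ (uniqueness holds because any minimal subset surjects onto $J$ under $\phi$ and therefore contains every singleton-fiber point, and all but countably many fibers are singletons), applying Theorem~\ref{interval-adding-machine} to $M$ itself makes every point of $M$ regularly recurrent, contradicting the standing hypothesis of case~(2). This does not affect your conclusion.
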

\begin{proof}
By Theorem \ref{interval-structure-of-limit-set}, Theorem \ref{interval-adding-machine} and the definition of $f$-nonseparability.
\end{proof}

\begin{prop}\label{interval-f-nonsep-eq}
Let $f\in C(I,I)$ with $h(f)=0$ and $u<v\in I$. Then the following conditions are equivalent:
\begin{enumerate}
\item $\langle u,v\rangle$ is $f$-nonseparable;
\item there exists an $\omega$-limit set $\omega_0$ which is infinite such that $\{u,v\}\in \omega_0$ and $(u,v)\cap Per(f)=\emptyset$.
\end{enumerate}
\end{prop}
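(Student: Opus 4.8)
The plan is to prove the two implications separately, working throughout with the periodic portion $(J_k^i)_{k\ge 1,\,0\le i<2^k}$ of an infinite $\omega$-limit set and leaning on the structure theorem (Theorem~\ref{interval-structure-of-limit-set}) and the separation lemma (Lemma~\ref{interval-key-lemma}). For $(1)\Rightarrow(2)$ I would simply unwind the definition: $f$-nonseparability of $\langle u,v\rangle$ furnishes a point $x$ with $\omega_f(x)$ infinite, its periodic portion $(J_k^i)$, and a nested sequence $J_1^{i_1}\supset J_2^{i_2}\supset\cdots$ with $K:=\bigcap_k J_k^{i_k}=[u,v]$. Take $\omega_0=\omega_f(x)$, which is infinite. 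Since $K$ is nondegenerate we are in case (2) or (3) of Theorem~\ref{interval-structure-of-limit-set}, so $K\cap\omega_f(x)=\{u,v\}$; in particular $u,v\in\omega_0$. Moreover $K\subset\bigcap_k\bigcup_i J_k^i=C(x)$ and $C(x)\cap Per(f)=\emptyset$ by the same theorem, whence $[u,v]\cap Per(f)=\emptyset$ and a fortiori $(u,v)\cap Per(f)=\emptyset$. This yields (2) with essentially no extra work.

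For $(2)\Rightarrow(1)$ I would write $\omega_0=\omega_f(x)$ with $\omega_f(x)$ infinite and fix its periodic portion $(J_k^i)$. Since $u,v\in\omega_f(x)\subset\bigcup_i J_k^i$ for every $k$, I obtain nested sequences $J_1^{a_1}\supset J_2^{a_2}\supset\cdots$ and $J_1^{b_1}\supset J_2^{b_2}\supset\cdots$ with $u\in K_u:=\bigcap_k J_k^{a_k}$ and $v\in K_v:=\bigcap_k J_k^{b_k}$. The crux is to force $K_u=K_v$. If instead $a_k\ne b_k$ for some $k$, then by Lemma~\ref{interval-key-lemma} there is a periodic point lying between $J_k^{a_k}$ and $J_k^{b_k}$; since $u\in J_k^{a_k}$, $v\in J_k^{b_k}$ and $u<v$, this periodic point lies in the open interval $(u,v)$, contradicting $(u,v)\cap Per(f)=\emptyset$. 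Hence $a_k=b_k$ for all $k$, so $K_u=K_v=:K$ is a single nested atom containing both $u$ and $v$.

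It then remains to identify $K$. As a nested intersection of closed intervals it is a closed interval, and it is nondegenerate because $u\ne v$. By Theorem~\ref{interval-structure-of-limit-set} a nondegenerate atom satisfies $K\cap\omega_f(x)=\{y,z\}$ with exactly two points; since $u,v\in K\cap\omega_f(x)$ are distinct, $\{u,v\}=\{y,z\}$, and therefore $K=[u,v]$. Thus $\bigcap_k J_k^{a_k}=[u,v]$, which is precisely the defining condition for $\langle u,v\rangle$ to be $f$-nonseparable.

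The main obstacle is the separation step in $(2)\Rightarrow(1)$: converting the hypothesis ``no periodic point in $(u,v)$'' into the statement that $u$ and $v$ share a common nested atom. This is exactly where Lemma~\ref{interval-key-lemma} carries the weight, turning any splitting of the two nested sequences into a periodic point trapped strictly between $u$ and $v$. Once that is secured, the final identification $K=[u,v]$ is a routine consequence of the structure theorem's description of nondegenerate atoms, and the implication $(1)\Rightarrow(2)$ is immediate from the same structural facts.
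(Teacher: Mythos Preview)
Your proof is correct and follows essentially the same route as the paper's: both directions rest on Theorem~\ref{interval-structure-of-limit-set} for $(1)\Rightarrow(2)$, and for $(2)\Rightarrow(1)$ the decisive step is the same---if $u$ and $v$ land in different intervals $J_k^{a_k}\ne J_k^{b_k}$ at some level $k$, Lemma~\ref{interval-key-lemma} produces a periodic point strictly between them, contradicting $(u,v)\cap Per(f)=\emptyset$. You are simply more explicit than the paper in the concluding identification $K=[u,v]$ via the two-point description of nondegenerate atoms in Theorem~\ref{interval-structure-of-limit-set}, which the paper leaves implicit.
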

\begin{proof}  By the definition of $f$-nonseparability and Theorem \ref{interval-structure-of-limit-set},
(1)$\Rightarrow$(2) is obvious. It remains to show
(2)$\Rightarrow$(1). Let $(J_k^i)_{k\geq 1, 0\leq i <2^k}$ be the
periodic portion of $\omega_0$. If $\langle u, v\rangle$ is not
$f$-nonseparable, then there exist some $k\geq 1$ and two different
intervals $J_k^{i_1}$ and $J_k^{i_2}$ such that $u\in J_k^{i_1},
v\in J_k^{i_2}$. Then by Lemma \ref{interval-key-lemma} there exists
a periodic point $p$ between $J_k^{i_1}$ and $J_k^{i_2}$. Therefore,
$p\in (u,v)$. This is a contradiction.
\end{proof}

\begin{prop}
Let $f\in C(I,I)$ with $h(f)=0$. Then the set of $f$-nonseparable pairs is either empty or countable.
\end{prop}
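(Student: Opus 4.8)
The plan is to show that the set of $f$-nonseparable pairs injects into a countable index set, namely pairs $(\omega_0, i)$ where $\omega_0$ ranges over the (distinct) infinite maximal $\omega$-limit sets and $i$ indexes the nested sequence of intervals in the periodic portion that collapses to the pair. The first observation I would record is that for each infinite $\omega$-limit set, the periodic portion $(J_k^i)_{k\geq 1,\,0\leq i<2^k}$ gives rise to at most countably many nested sequences $J_1^{i_1}\supset J_2^{i_2}\supset\cdots$ that collapse to a nondegenerate interval $[u,v]$. Indeed, by Theorem \ref{interval-structure-of-limit-set} each such nested sequence falls into alternative (2) or (3), and these are precisely the ``gaps'' in the Cantor-like set $C(x)=\bigcap_k\bigcup_i J_k^i$; the complement of a compact subset of $I$ has only countably many components, so there are at most countably many nondegenerate $K=[u,v]$ arising from one fixed periodic portion.

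Next I would handle the multiplicity coming from different base points. By the definition of $f$-nonseparability, every $f$-nonseparable pair $\langle u,v\rangle$ is associated to some $x$ with $\omega_f(x)$ infinite, but the periodic portion depends only on the \emph{maximal} $\omega$-limit set $\omega_0\supset\omega_f(x)$, which is unique for each $x$ (as noted before Definition \ref{interval-defn-ckx}), and two base points with the same maximal $\omega$-limit set yield the same periodic portion. So it suffices to bound the number of \emph{distinct} infinite maximal $\omega$-limit sets. By Lemma \ref{interval-maximal-omega-set}, distinct maximal $\omega$-limit sets are pairwise disjoint; since each infinite one contains infinitely many points and $I$ is separable, a standard argument shows there can be only countably many of them. (Concretely, each distinct maximal $\omega$-limit set contributes at least one rational-length gap, or one may note that a family of pairwise disjoint infinite closed subsets of $I$, each meeting a fixed countable dense set differently, is countable.)

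Combining the two bounds, the set of $f$-nonseparable pairs is a countable union (over maximal $\omega$-limit sets) of countable sets, hence countable; and of course it may be empty, for instance when $f$ has no infinite $\omega$-limit set at all.

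The step I expect to be the main obstacle is making the countability of distinct infinite maximal $\omega$-limit sets fully rigorous, since a priori an uncountable family of pairwise disjoint closed sets in $I$ need not be countable unless one exploits that each has nonempty interior \emph{in the line between them} or picks a distinguishing rational. The clean route is: to each distinct maximal $\omega$-limit set $\omega_0$ associate an open interval disjoint from it but meeting the others is not automatic, so instead I would associate to each nondegenerate gap $[u,v]$ a rational point in the open interval $(u,v)$ (which is nonempty and, by disjointness of the $J_k^i$ across different nested sequences, distinct gaps get distinct rationals). This single assignment $\langle u,v\rangle\mapsto$ rational in $(u,v)$ injects the entire collection of $f$-nonseparable pairs into $\mathbb{Q}$, simultaneously absorbing both the multiplicity over nested sequences and over maximal $\omega$-limit sets, and thereby giving the cleanest proof of countability.
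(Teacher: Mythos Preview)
Your final ``clean'' argument---injecting each $f$-nonseparable pair $\langle u,v\rangle$ (with $u<v$) into $\mathbb{Q}$ by picking a rational in $(u,v)$---is correct and in fact more direct than the paper's proof, but your justification for injectivity is incomplete. The phrase ``by disjointness of the $J_k^i$ across different nested sequences'' only handles pairs coming from the \emph{same} periodic portion; it says nothing about pairs arising from two different maximal $\omega$-limit sets, whose periodic portions may well overlap. The missing ingredient is exactly the observation recorded in the paragraph preceding this proposition: $[u,v]\cap EP(f)=\emptyset$, while $u$ is a left limit and $v$ a right limit of $EP(f)$. From this it follows easily that for two distinct $f$-nonseparable pairs the open intervals $(u,v)$ and $(u',v')$ are disjoint (if, say, $u<u'<v$, then eventually periodic points accumulating on $u'$ from the left would lie in $(u,v)$), and then your rational-selection map is genuinely injective.

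The paper's own proof is your \emph{first} approach, with the fix you were groping for. You correctly flagged that counting all infinite maximal $\omega$-limit sets may fail (pairwise disjoint Cantor sets in $I$ can be uncountable). The paper sidesteps this by restricting to maximal $\omega$-limit sets that actually contain an $f$-nonseparable pair: for such a set, $C(x)$ contains the nondegenerate interval $[u,v]$, and distinct $C(x)$'s are pairwise disjoint, so the disjoint-interval trick applies at that coarser level. It then notes each such $\omega$-limit set contributes only countably many nonseparable pairs. Your direct injection collapses these two steps into one, which is arguably neater; the paper's version has the mild advantage of also identifying which maximal $\omega$-limit sets carry nonseparable pairs.
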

\begin{proof}
Suppose that the set of $f$-nonseparable pairs is not empty. Let
$A=\{x\in I: \omega_f(x)$ is a maximal $\omega$-limit set and there
exist $u,v\in \omega_f(x)$  such that $\langle u,v\rangle$ is
$f$-nonseparable$\}$. Then $A$ is at most countable since for every
$x\in A$, $C(x)$ contains a non-degenerate interval and any two
different $C(x)$ and $C(y)$ are pairwise disjoint.
Then by Theorem \ref{interval-structure-of-limit-set}, Theorem
\ref{interval-adding-machine} and Proposition \ref{interval-f-nonsep-eq},
for every $x\in A$, $\omega_f(x)\times \omega_f(x)$ contains exactly countable
$f$-nonseparable pairs .
\end{proof}

\subsection{Chaos in the sense of Li-Yorke}

\begin{lem}\label{intervl-nonseparable-lemma-new}
Let $f\in C(I,I)$ with $h(f)=0$ and $u<v\in I$. Suppose $\langle
u,v \rangle$ is $f$-nonseparable  with respect to $\omega_f(x)$,
$(J_k^i)_{k\geq 1, 0\leq i <2^k}$ and $\bigcap_{k=1}^\infty
J_k^{i_k}=[u,v]$. If $A_1$ and $A_2$ are neighborhoods of $u$ and
$v$ respectively, then there exists some $n\geq 1$ such that $[u,v]$ is
in the interior of $f^{2^n}(A_1)\cap f^{2^n}(A_2)$.
\end{lem}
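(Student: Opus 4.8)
The plan is to reduce the statement to a two-sided covering assertion and then drive it with the self-covering of the periodic portion. Since each $f^{2^n}(A_i)$ is an interval (I may assume $A_1,A_2$ are open intervals), to place $[u,v]$ in its interior it suffices to exhibit points $p_i^-,p_i^+\in A_i$ with $f^{2^n}(p_i^-)<u$ and $f^{2^n}(p_i^+)>v$; connectedness of the image then gives $[u,v]\subseteq\operatorname{int}f^{2^n}(A_i)$. So the target is a single $n$ for which each of $A_1,A_2$ contains a point sent strictly left of $u$ and a point sent strictly right of $v$ by $f^{2^n}$.

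The engine is Proposition \ref{interval-periodic-portion-2}. Write $J_n:=J_n^{i_n}=[a_n,b_n]$ for the nested sequence with $\bigcap_nJ_n=[u,v]$. Parts (1),(3) give $f^{2^n}(J_n)\supseteq J_n$, and $f^{2^n}$ maps each of the two level-$(n+1)$ halves of $J_n$ over the other. Nesting forces $a_n\uparrow u$ and $b_n\downarrow v$, so the half of $J_n$ that does NOT contain $[u,v]$ is squeezed against $[u,v]$: for large $n$ it lies in $(u-\varepsilon,u)\subseteq A_1$ when it is the left half, or in $(v,v+\varepsilon)\subseteq A_2$ when it is the right half, and in either case its $f^{2^n}$-image contains the sibling half, hence contains $[u,v]$. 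First I would record this one-sided covering precisely.

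To turn it into genuine interior covering and to have a supply of usable scales, I would bring in periodic points. By Lemma \ref{interval-key-lemma} and Lemma \ref{interval-Ruette-lemma}(1), together with $[u,v]\subseteq C(x)$ and $C(x)\cap Per(f)=\emptyset$ (Theorem \ref{interval-structure-of-limit-set}), there are periodic points accumulating at $u$ from the left and at $v$ from the right, of periods that are powers of $2$ (Theorem \ref{interval-zero-entropy}). A period-$2^m$ point just outside $[u,v]$, combined with $f^{2^m}(J_m)\supseteq J_m$ and the identity $f^{2^{m+1}}=f^{2^m}\circ f^{2^m}$, should let me propagate a covering to all larger powers of two; and since $[u,v]$ is wandering, no iterate of $[u,v]$ meets $[u,v]$, which I would use to guarantee the image overshoots BOTH endpoints rather than merely abutting one.

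The hard part will be synchronization. At any single level the free half lies on only one side of $[u,v]$, so the naive mechanism covers $[u,v]$ from inside $A_1$ at some levels and from inside $A_2$ at others, and these level-sets may be disjoint; the same obstruction reappears for the interior slivers $(u,u+\varepsilon)$ and $(v-\varepsilon,v)$, whose images sit on one fixed side of $[u,v]$ for all large $n$. Resolving this is exactly where the recurrence dichotomy of the endpoints in Theorem \ref{interval-structure-of-limit-set} (regularly recurrent versus strongly recurrent or non-recurrent) and the adding-machine factor of Theorem \ref{interval-adding-machine} enter: off the orbit of $\{u,v\}$ the maps $f^{2^n}$ converge to the identity on $\omega_f(x)$, so points of $\omega_f(x)$ immediately outside $[u,v]$ are almost fixed and pin the exterior side of each neighbourhood, while wandering of $[u,v]$ forces the crossing on the interior side. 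Matching these two inputs to produce one common $n$ that drives both $f^{2^n}(A_1)$ and $f^{2^n}(A_2)$ strictly across $[u,v]$ is the crux.
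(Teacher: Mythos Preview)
Your setup is right, and you have correctly located the real obstacle: synchronization. The half-swapping gives covering of $[u,v]$ from one neighbourhood at each level, but there is no evident common $n$ serving both $A_1$ and $A_2$.

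The resolution you sketch does not close this gap. The claim that $f^{2^n}$ converges to the identity on $\omega_f(x)$ away from the orbit of $\{u,v\}$ is precisely the characterization of regular recurrence in the last line of Theorem~\ref{interval-structure-of-limit-set}, and it can fail: in alternative~(2) neither endpoint is regularly recurrent, and nothing prevents the nearest $\omega$-limit points outside $[u,v]$ from themselves being endpoints of another nondegenerate limiting interval, hence also not regularly recurrent. Even granting your exterior anchors, wandering only says that $f^{2^n}([u,v])$ lies entirely on \emph{one} side of $[u,v]$; whichever side that is, the \emph{other} neighbourhood gets no interior-flank crossing at that $n$, and the adding-machine factor (which collapses $\{u,v\}$ to a point) cannot recover the lost side information. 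The paper breaks the deadlock by a different and more concrete device. It uses Lemma~\ref{interval-Ruette-lemma}(2) to place periodic points $y_j$ in forward images $f^{n_j}(A_j)$, then chooses $q$ so large that $2^q$ exceeds $n_1,n_2$ and is a common multiple of both periods; pushing forward yields points $z_j\in f^{2^q}(A_j)$ which are fixed by $g:=f^{2^q}$ and, having period strictly less than $2^q$, lie \emph{outside} $J_q^{i_q}$. A third anchor is the $g$-fixed point $c$ sitting between the two level-$(q{+}1)$ halves of $J_q^{i_q}$ (from Proposition~\ref{interval-periodic-portion-2}(4)). Since $z_j$ and $c$ are genuine fixed points of $g$, a two-case analysis on which half carries $[u,v]$ shows that $[u,v]$ lies in the interior of $g^4(A_j)$ for each $j$; hence $n=q+2$ works for both neighbourhoods simultaneously, with no asymptotic recurrence input needed.
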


\begin{proof}
By Lemma \ref{interval-Ruette-lemma}, there exists some $n_j\geq 0$ such
that $f^{n_j}(A_j)$  contains periodic points $y_j$  for $j=1,2$. The
periods of $y_1, y_2$ are some powers of 2. Let $2^p$ be a common
multiple of their periods and let $q>p$ such that $2^q>\max\{n_1,
n_2\}$. Let $z_j=f^{2^{q-n_j}}(y_j)$ for $j=1,2$. Then
$f^{2^p}(z_j)=z_j\in f^{2^{q-n_j}}(f^{n_j}(A_j))=f^{2^q}(A_j)$ for
$j=1,2$. Moreover, $z_j\not \in J_q^{i_q}$ since the period of $z_j$
is less than $2^q$. Suppose for instance that $z_1$ is in the left
of $J_q^{i_q}$, the case in the right being symmetric.

Let $g=f^{2^q}$. There exists $i_{q_0}\geq 0$ such that
$J_{q+1}^{i_{q+1}}\cup J_{q+1}^{i_{q_0}} \subset J_q^{i_q}$,
$g(J_{q+1}^{i_{q+1}})\supset J_{q+1}^{i_{q_0}}$ and
$g(J_{q+1}^{i_{q_0}})\supset J_{q+1}^{i_{q+1}}$,  and a fixed point
$c$ for $g$ between $J_{q+1}^{i_{q+1}}$ and $J_{q+1}^{i_{q_0}}$.

Case 1. $J_{q+1}^{i_{q+1}}$ is in the left of
$J_{q+1}^{i_{q_0}}$, then by  connectedness $g(A_1)\supset
J_{q+1}^{i_{q+1}}$ since $g(u)\in J_{q+1}^{i_{q_0}}$ and $z_1\in
g(A_1)$. Hence, $g^2(A_1)\supset J_{q+1}^{i_{q_0}}\cup \{z_1, c\}$
and by the connectedness $g^2(A_1)\supset [z_1, c]\supset
J_{q+1}^{i_{q+1}}$.

Case 2. $J_{q+1}^{i_{q+1}}$ is in the right of
$J_{q+1}^{i_{q_0}}$, then by the connectedness $g^2(A_1)\supset
J_{q+1}^{i_{q_0}}\cup \{z_1, c\}$ since $z_1\in g^2(A_1)$ and
$g^2(u)\in J_{q+1}^{i_{q+1}}$. Then $g^3(A_1)\supset
J_{q+1}^{i_{q+1}}\cup\{z_1,c\}$ and by the connectedness $g^3(A_1)\supset
[z_1, c]$. Therefore, $[u,v]$ are in the interior of $g^4(A_1)$,
since there exists a point $h\in J_{q+1}^{i_{q_0}}\cap \omega_f(x)$
such that $g(h)=v$ and a neighborhood $D$ of $h$ with $g(D)\subset
A_2$, then $D$ contains an eventually periodic point $e$ and $g(e)$
must be in the right of $v$.

As a conclusion, we get $[u,v]$ is in the interior of $g^4(A_1)$.
Similarly,  we have $[u,v]$ is in the interior of $g^4(A_2)$. Let
$n=q+2$, then $[u,v]$ is in the interior of $f^{2^n}(A_1)\cap
f^{2^n}(A_2)$.
\end{proof}

By Lemma \ref{intervl-nonseparable-lemma-new} and induction, we have the following result.
\begin{prop}\label{interval-covering-map}
Let $f\in C(I,I)$ with $h(f)=0$ and $u,v\in I$. If $\langle u,v
\rangle$ is  $f$-nonseparable,  then there exist two sequences of
closed intervals $\{U_n\}$ and $\{V_n\}$  which are neighborhoods of
$u$ and $v$ respectively, and a sequence of positive number
$\{k_n\}$ such that
\begin{enumerate}
\item  $U_n\supset U_{n+1}$, $\lim_{n\to\infty}\text{diam}(U_n)= 0$,
\item  $V_n\supset V_{n+1}$, $\lim_{n\to\infty}\text{diam}(V_n)=0$,
\item $f^{2^{k_n}}(U_n)\cap f^{2^{k_n}}(V_n) \supset U_{n+1}\cup V_{n+1}$.
\end{enumerate}
\end{prop}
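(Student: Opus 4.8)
The plan is to build the three sequences recursively, letting Lemma \ref{intervl-nonseparable-lemma-new} drive each step; the proposition is essentially a bookkeeping induction on top of that lemma. Since $\langle u,v\rangle$ is $f$-nonseparable we may assume $u<v$ and fix the witnessing data: a point $x$ with $\omega_f(x)$ infinite, its periodic portion $(J_k^i)_{k\ge1,\,0\le i<2^k}$, and a nested sequence $J_1^{i_1}\supset J_2^{i_2}\supset\cdots$ with $\bigcap_{k}J_k^{i_k}=[u,v]$. From the earlier lemma of this subsection we also know $u,v\in(0,1)$, so $u$ and $v$ are interior points of $I$ and ``neighborhood'' carries no boundary subtlety. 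For the base of the recursion I would take any closed intervals $U_1\ni u$ and $V_1\ni v$ that are genuine neighborhoods (that is, $u\in\operatorname{int}U_1$ and $v\in\operatorname{int}V_1$) with diameter at most $1$; shrinking them if necessary we may even take them disjoint.

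For the inductive step, suppose $U_n,V_n$ have been constructed as closed neighborhoods of $u$ and $v$. Applying Lemma \ref{intervl-nonseparable-lemma-new} with $A_1=U_n$ and $A_2=V_n$ produces an integer $k_n\ge1$ for which $[u,v]$ lies in the interior of $f^{2^{k_n}}(U_n)\cap f^{2^{k_n}}(V_n)$. Because $f^{2^{k_n}}$ is continuous and $U_n,V_n$ are closed intervals, both images are closed intervals and hence so is their intersection; its interior is therefore an open interval $W$ with $[u,v]\subset W$. In particular $u,v\in W$, while $u\in\operatorname{int}U_n$ and $v\in\operatorname{int}V_n$ by the inductive hypothesis, so $W\cap\operatorname{int}U_n$ and $W\cap\operatorname{int}V_n$ are open neighborhoods of $u$ and of $v$ respectively. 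I would then select closed intervals
$$U_{n+1}\subset W\cap\operatorname{int}U_n,\qquad V_{n+1}\subset W\cap\operatorname{int}V_n$$
with $u\in\operatorname{int}U_{n+1}$, $v\in\operatorname{int}V_{n+1}$ and $\operatorname{diam}U_{n+1},\operatorname{diam}V_{n+1}\le\tfrac12\max\{\operatorname{diam}U_n,\operatorname{diam}V_n\}$. Then $U_{n+1}\subset U_n$ and $V_{n+1}\subset V_n$, and since $U_{n+1}\cup V_{n+1}\subset W\subset f^{2^{k_n}}(U_n)\cap f^{2^{k_n}}(V_n)$, condition (3) is satisfied; the halving of diameters forces $\operatorname{diam}U_n,\operatorname{diam}V_n\to0$, giving (1) and (2).

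The genuinely hard content has already been packaged in Lemma \ref{intervl-nonseparable-lemma-new}, so the only point here that needs a moment's care is the simultaneous fit at each stage: $U_{n+1}$ must sit inside $U_n$ (for the nesting in (1)) and inside the image intersection (for (3)) while remaining a neighborhood of $u$ of controllably small diameter, and likewise for $V_{n+1}$. This is possible precisely because the lemma places all of $[u,v]$, and in particular each of its endpoints $u$ and $v$, in the \emph{interior} of $f^{2^{k_n}}(U_n)\cap f^{2^{k_n}}(V_n)$; intersecting that interior with $\operatorname{int}U_n$ and $\operatorname{int}V_n$ (which already contain $u$ and $v$) still leaves honest open room around both points, so the required closed subintervals exist.
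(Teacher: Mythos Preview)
Your proof is correct and follows exactly the approach the paper intends: the paper's own proof is the single line ``By Lemma \ref{intervl-nonseparable-lemma-new} and induction,'' and you have simply supplied the (careful, correct) details of that induction, including the observation that $u,v\in(0,1)$ and that the image intersection has open interior containing $[u,v]$, which is what makes the simultaneous nesting and covering possible.
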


\begin{thm}\label{interval-LY-chaos}
Let $f\in C(I,I)$ with $h(f)=0$, then the following conditions are equivalent:
\begin{enumerate}
\item $f$ is Li-Yorke chaotic;
\item there exists a scrambled pair;
\item there exists a $\delta$-scrambled Cantor set for some $\delta>0$;
\item there exists an $f$-nonseparable pair.
\end{enumerate}
\end{thm}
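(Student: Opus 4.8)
The plan is to prove the cycle of implications $(3)\Rightarrow(1)\Rightarrow(2)\Rightarrow(4)\Rightarrow(3)$, so that the two substantial arrows are $(2)\Rightarrow(4)$ and $(4)\Rightarrow(3)$. The first two arrows are immediate: a $\delta$-scrambled Cantor set is in particular an uncountable scrambled set, which gives $(1)$, and any uncountable scrambled set contains two distinct points whose pair is scrambled, which gives $(2)$.

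For $(2)\Rightarrow(4)$ I would start from a scrambled pair $\langle x,y\rangle$, that is, a pair that is proximal but not asymptotic. Proximality puts $\omega_f(x)$ and $\omega_f(y)$ inside a common maximal $\omega$-limit set $\omega_0$. First I rule out $\omega_0$ finite: if $\omega_0$ were a periodic orbit, a proximal pair would automatically be asymptotic (as already observed in the proof of Proposition \ref{interval-bd1-proximal}), contradicting our hypothesis. Hence $\omega_0$ is infinite and carries a periodic portion $(J_k^i)$. The key step is to exhibit a nested branch $\bigcap_k J_k^{i_k}$ that is a nondegenerate interval; by the definition of $f$-nonseparability, or equivalently by Proposition \ref{interval-f-nonsep-eq}, its two endpoints then form an $f$-nonseparable pair, which is $(4)$. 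I produce such a branch by contradiction: if every branch were a single point, then a K\"onig's lemma argument on the finitely branching tree of the $J_k^i$ (each node has two children, and the diameter of a node dominates those of its children) forces $\max_i \operatorname{diam} J_{k,s_k}^i\to 0$ as $k\to\infty$. Combined with Proposition \ref{interval-prox-eq}, which says a proximal pair satisfies $c_x(k)=c_y(k)\ (\mathrm{mod}\ 2^k)$ for all $k$, this would put $f^n(x)$ and $f^n(y)$ in one interval of vanishing diameter for all large $n$, i.e. $\langle x,y\rangle$ would be asymptotic, a contradiction. So a nondegenerate branch exists.

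For $(4)\Rightarrow(3)$ I take an $f$-nonseparable pair $\langle u,v\rangle$ with $u<v$ and feed it into Proposition \ref{interval-covering-map}, getting nested neighborhoods $U_n\ni u$, $V_n\ni v$ with $\operatorname{diam}U_n,\operatorname{diam}V_n\to 0$ and covering relations $f^{2^{k_n}}(U_n)\cap f^{2^{k_n}}(V_n)\supset U_{n+1}\cup V_{n+1}$. Writing $W_n^0=U_n$, $W_n^1=V_n$ and $M_n=\sum_{j<n}2^{k_j}$, these relations let me realize arbitrary itineraries: for $s\in\{0,1\}^{\mathbb N}$ the standard pull-back lemma for continuous interval maps yields nested closed intervals $T_1^s\supset T_2^s\supset\cdots$ with $f^{M_n}(T_n^s)=W_n^{s_n}$, and any $x_s\in\bigcap_n T_n^s$ then satisfies $f^{M_n}(x_s)\in W_n^{s_n}$ for all $n$. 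Since $T_n^s$ depends only on the first $n$ symbols of $s$, I write $T_w$ for $w\in\{0,1\}^n$; because $W_n^0$ and $W_n^1$ are disjoint for large $n$, the two children of each node are disjoint closed intervals, so the tree $\{T_w\}$ carves out a Cantor set from which I select representatives along a suitable family of branches. The scrambling is read off the itineraries: where $s_n=t_n$ the points $f^{M_n}(x_s),f^{M_n}(x_t)$ share $W_n^{s_n}$, whose diameter tends to $0$, whereas where $s_n\neq t_n$ they sit in $U_n$ and $V_n$ and are separated by roughly $v-u$.

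The main obstacle is exactly this last step: to obtain a genuinely $\delta$-scrambled Cantor set I must choose the family of itineraries so that every pair of distinct sequences agrees in infinitely many coordinates (forcing $\liminf_n|f^n(x_s)-f^n(x_t)|=0$) and disagrees in infinitely many coordinates (forcing $\limsup_n|f^n(x_s)-f^n(x_t)|\ge\delta$ for any $\delta<v-u$). A naive injective coding of $\{0,1\}^{\mathbb N}$ fails, since eventually-equal parameters would disagree only finitely often. I would circumvent this by encoding each parameter $\alpha\in\{0,1\}^{\mathbb N}$ into $s^\alpha$ block-wise, writing the full prefix $\alpha_1\cdots\alpha_k$ (padded by zeros) into the $k$-th block: a single disagreement $\alpha_j\neq\beta_j$ then propagates into every block of index $\ge j$, giving infinitely many disagreements, while the zero padding supplies infinitely many agreements. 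This map is a continuous injection of the Cantor set $\{0,1\}^{\mathbb N}$, its image is the required itinerary family, and the associated points form a $\delta$-scrambled Cantor set with $\delta=(v-u)/2$, completing $(4)\Rightarrow(3)$.
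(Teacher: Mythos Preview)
Your argument follows the same cycle $(3)\Rightarrow(1)\Rightarrow(2)\Rightarrow(4)\Rightarrow(3)$ and the same supporting lemmas as the paper. For $(2)\Rightarrow(4)$ the paper is slightly more direct than your K\"onig/compactness argument: it picks a subsequence with $f^{n_i}(x)\to a$, $f^{n_i}(y)\to b$, $a\neq b$, and uses proximality (via Proposition~\ref{interval-prox-eq}) to force $a$ and $b$ into the same $J_k^i$ for every $k$, so that $\langle a,b\rangle$ is itself the $f$-nonseparable pair. For $(4)\Rightarrow(3)$ both proofs build the same itinerary tree from Proposition~\ref{interval-covering-map}; the paper's coding $\psi(\alpha)=a_0c_0\,a_0a_1c_0c_1\,\cdots$ with $\gamma=(c_i)=0101\cdots$ is essentially a variant of your padded-prefix scheme and serves the same purpose. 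The one step you should tighten is the passage from itineraries to an actual Cantor set of points: for countably many $s$ the nested intersection $\bigcap_n T_n^s$ may be a nondegenerate interval, so the ``selected representatives'' need not form a closed set. The paper handles this explicitly by removing the interiors of those (countably many) intervals to obtain a compact totally disconnected set $X$, observing that the preimage of the coded itineraries is an uncountable closed subset of $X$, and then extracting a Cantor subset from it.
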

\begin{proof}
(3)$\Rightarrow$(1)$\Rightarrow$(2) is trivial.

(2)$\Rightarrow$(4). Let $\langle x, y\rangle \in I^2$ be a
scrambled pair. Then $\omega_f(x)$ and  $\omega_f(y)$ are contained
in the same maximal $\omega$-limit set $\omega_0$ which is infinite.
Let $(J_{k,s_k}^i)_{k\geq 1, 0\leq i <2^k}$ be the periodic portion
of $\omega_0$. Since $\limsup_{n\to\infty}|f^n(x)-f^n(y)|>0$, there
exists a sequence $\{n_i\}$ such that
$\lim_{i\to\infty}f^{n_i}(x)=a$ and $\lim_{i\to\infty}f^{n_i}(y)=b$
for $a\neq b$. Then $\langle a,b \rangle$ is $f$-nonseparable. In
fact, if not, then there exist $k\geq 1$ and $i_0\neq i_1$ such that
$a\in J_k^{i_0}$ and  $b\in J_k^{i_1}$. Thus, there are infinitely
many $n_i$ such that $f^{n_i}(x)\in J_{k,s_k}^{i_0}$ and
$f^{n_i}(y)\in J_{k,s_k}^{i_1}$. This is a contradiction since
$\langle x, y\rangle$ is proximal.

(4)$\Rightarrow$(3). Let $\langle u,v\rangle\in I^2$ be an $f$-nonseparable pair and $\delta=|v-u|$.
Let $\{U_n\}$, $\{V_n\}$ and $\{k_n\}$ as in Proposition \ref{interval-covering-map}. Let $t_0=0$ and $t_m=\sum_{n=1}^m 2^{k_n}$ for $m\geq 1$.

First we build a family of closed subintervals $\{E_{a_0a_1,\ldots,a_m}: m\geq 0, a_i\in\{0,1\}\}$ satisfying
 the following properties:
\begin{enumerate}
\item[(a)] $E_{a_0a_1,\ldots,a_ma_{m+1}}\subset E_{a_0a_1,\ldots,a_m}$,
\item[(b)] $E_{a_0a_1,\ldots,a_m}\cap E_{b_0b_1,\ldots,b_m}=\emptyset$ if $a_0a_1,\ldots,a_m\neq b_0b_1,\ldots,b_m$,
\item[(c)] for $\alpha=a_0a_1,\ldots,a_m$, $f^{t_i}(E_\alpha) \subset W_i$ for $i=0,1,\ldots,m-1$
and $f^{t_m}(E_\alpha)=W_m$ where $W_i= U_i$ if $a_i=0$ and $W_i= V_{i}$ if $a_i=1$.
\end{enumerate}

Let $E_0=U_1$, $E_1=V_1$. Suppose that $E_{a_0a_1,\ldots,a_m},
a_i\in\{0,1\}$ are already defined. For $a_0a_2,\ldots, a_ma_{m+1}$,
we have $E_{a_0a_1,\ldots,a_m} \stackrel{f^{t_m}}{\longrightarrow}
W_m \stackrel{f^{2^{k_{m+1}}}}{\longrightarrow} W_{m+1}$, where notation
$A\stackrel{g}{\longrightarrow}B$ means $g(A) \supset B$. Let $F$ be
a subinterval of $W_m$ of minimal length such that
$f^{2^{k_{m+1}}}(F)=W_{m+1}$ and $E_{a_0a_1,\ldots,a_ma_{m+1}}$ be a
subinterval of $E_{a_0a_1,\ldots,a_m}$ of minimal length such that
$f^{t_m}(E_{a_0a_1,\ldots,a_ma_{m+1}})=F$.
Then it is easy to verify that $E_{a_0a_1,\ldots,a_ma_{m+1}}$ satisfies the requirement.

With $\mathbb Z_+=\{0,1,2,\ldots\}$ let $\Sigma$ be the Cantor space
$\{0,1\}^{\mathbb Z_+}$  regard as the set of infinite words. For
every $\alpha=(a_0 a_1, a_2, \ldots)\in \Sigma$, let
$E_\alpha=\bigcap_{m=0}^{\infty}E_{a_0a_1\ldots a_m}$. Then
$E_\alpha$ is either a nonempty compact interval or a single point.
Moreover $E_\alpha\cap E_\beta=\emptyset$ if $\alpha\neq \beta\in \Sigma$.

Let $\Lambda=\{\alpha\in\Sigma: E_\alpha$ is not reduced to a single
point$\}$. The set  $\Lambda$ is at most countable because the sets
$(E_\alpha)_{\alpha\in\Lambda}$ are pairwise disjoint and nondegenerate
intervals. Let
$$X=\bigcup_{\alpha\in \Sigma} E_\alpha\setminus \bigcup_{\beta\in \Lambda} int(E_\beta).$$
It is easy to see that $X$ is a totally disconnected compact set.
Define $\phi:X \to \Sigma$  by $\phi(x)=\alpha$ if $x\in E_\alpha$.
Clearly, the map $\phi$ is well defined, continuous and onto.

Fix $\gamma=(c_0c_1\ldots)=(010101\ldots)$. Define $\psi: \Sigma\to\Sigma$ by
\[\psi((a_n)_{n\in\mathbb Z_+})=(a_0c_0a_0a_1c_0c_1\ldots a_0a_1\ldots a_n c_0c_1\ldots c_n\ldots).\]
The map $\psi$ is clearly continuous, thus $\psi(X)$ is
compact.

For every $\alpha \in \Sigma$, choose $x_\alpha\in X$ such that
$\phi(x_\alpha)=\psi(\alpha)$  and let $S=\{x_\alpha\in X:
\alpha\in\Sigma\}$. If $\psi(\alpha)\not \in \Lambda$ then there is
a unique choice for $x_\alpha$ and if  $\psi(\alpha) \in \Lambda$
then there are two possible choices. Consequently, $S$ is equal to
$\phi^{-1}(\psi(X))$ deprived of a countable set. Let $\alpha,
\beta$ be two distinct elements of $\Sigma$. By the definition of
$\psi$, for every $N\geq 0$ there exists some $m\geq N$ such that the
$m$-th coordinates of $\psi(\alpha)$ and $\psi(\beta)$ are distinct.
Then either $f^{t_m}(x_\alpha)\in U_{\psi(\alpha)_m}$ and $f^{t_m}(x_\beta)\in
V_{\psi(\alpha)_m}$, or the converse. Thus,
\[\limsup_{n\to\infty} |f^n(x_\alpha)-f^n(x_\beta)|\geq \delta.\]

According to the choice of $\gamma$ and the definition of $\psi$,
for every $N\geq 0$  there exists some $m\geq N$ such that for every
$\alpha\in\Sigma$ the $m$-th coordinate of $\psi(\alpha)$ is 0,
which implies $f^{t_m}(x_\alpha)\in U_{\psi(\alpha)_m}$. Thus,
$$\liminf_{n\to\infty}|f^n(x)-f^n(y)|=0$$
for all $x,y\in S$ since $\lim_{n\to\infty}diam(U_n)=\lim_{n\to\infty}diam(V_n)=0$.
By the choice of $\gamma$, it is also easy to see that $\{u,v\}\in\omega_f(x)$ for all $x\in S$.

Since $X$ is totally disconnected and $\phi^{-1}(\psi(X))$ is an
uncountable closed subset  of $X$, then $S$ contains some Cantor subset
$K$. Therefore, $K$ is a $\delta$-scrambled Cantor set.
\end{proof}

\begin{rem} It should be noticed that in the above Theorem {\rm (3)$\Leftrightarrow$(4)}
was proved in \cite{S86} and {\rm (2)$\Leftrightarrow$(3)} was proved in \cite{KS89}, but here we give a new proof.
\end{rem}


\begin{cor}\label{interval-f-non-sep-it}
Let $f\in C(I,I)$ with $h(f)=0$. If $\langle u,v \rangle$ is $f$-nonseparable, then $\langle u, v\rangle$ is an IT-pair.
\end{cor}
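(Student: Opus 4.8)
\section*{Proof proposal}

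The plan is to verify the definition of IT-pair directly: I must show that for every neighborhood $A_1$ of $u$ and every neighborhood $A_2$ of $v$, the pair $(A_1,A_2)$ admits an \emph{infinite} independence set $J\subset\mathbb Z_+$, i.e.\ that for every finite $I\subset J$ and every $s\in\{1,2\}^I$ one has $\bigcap_{i\in I}f^{-i}A_{s(i)}\neq\emptyset$. The natural tool is Proposition \ref{interval-covering-map}, whose covering relation $f^{2^{k_n}}(U_n)\cap f^{2^{k_n}}(V_n)\supset U_{n+1}\cup V_{n+1}$ says that, starting from \emph{either} $U_n$ or $V_n$, one may steer into either $U_{n+1}$ or $V_{n+1}$; this is exactly the freedom an independence set records.

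First I would fix $A_1\ni u$ and $A_2\ni v$ and apply Proposition \ref{interval-covering-map} to obtain sequences $\{U_n\}$, $\{V_n\}$, $\{k_n\}$. Since the $U_n$ are nested neighborhoods of $u$ with $\mathrm{diam}(U_n)\to 0$, and likewise for $V_n$ and $v$, there is an $N$ with $U_N\subset A_1$ and $V_N\subset A_2$; discarding the first $N-1$ terms and reindexing, I may assume $U_n\subset A_1$ and $V_n\subset A_2$ for all $n$, while retaining property (3) of the proposition. Then I set $t_m=\sum_{n=1}^m 2^{k_n}$ for $m\geq 1$; the $t_m$ are strictly increasing, so $J=\{t_m:m\geq 1\}$ is infinite, and I claim $J$ is an independence set for $(A_1,A_2)$.

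To prove the claim I would reuse the nested-interval construction in the proof of Theorem \ref{interval-LY-chaos} (obtained from property (3) by induction): given a finite index set $\{i_1<\cdots<i_r\}$, so that $I=\{t_{i_1},\ldots,t_{i_r}\}$, and a choice $s\in\{1,2\}^I$, encode $s$ as a word $a_1a_2\cdots a_m$ with $m=i_r$ by putting $a_{i_\ell}=0$ when $s(t_{i_\ell})=1$ and $a_{i_\ell}=1$ when $s(t_{i_\ell})=2$, the remaining letters being arbitrary. Iterating property (3) and taking successive minimal preimages produces a nonempty interval $E$ with $f^{t_j}(E)\subset W_j$ for every $j\leq m$, where $W_j=U_j$ if $a_j=0$ and $W_j=V_j$ if $a_j=1$. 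Since $U_j\subset A_1$ and $V_j\subset A_2$, every point $x\in E$ satisfies $f^{t_{i_\ell}}(x)\in A_{s(t_{i_\ell})}$ for all $\ell$, whence $\bigcap_{i\in I}f^{-i}A_{s(i)}\supset E\neq\emptyset$. As $I$ and $s$ were arbitrary, $J$ is an independence set, and since $A_1,A_2$ were arbitrary neighborhoods, $\langle u,v\rangle$ is an IT-pair.

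The main obstacle is the realization step: producing, from a prescribed binary itinerary along the times $t_j$, an actual point (equivalently a nonempty nested interval) with that itinerary. This is where the full strength of property (3) is essential, since \emph{both} $U_n$ and $V_n$ cover $U_{n+1}\cup V_{n+1}$, so the choice at step $n+1$ is independent of the position at step $n$; that independence is precisely what allows all $2^r$ patterns to be realized simultaneously rather than a single admissible itinerary. Everything else — the tail truncation using $\mathrm{diam}\to 0$ and the bookkeeping of indices — is routine.
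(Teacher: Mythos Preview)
Your proposal is correct and follows essentially the same approach as the paper: both invoke Proposition~\ref{interval-covering-map}, truncate so that $U_n\subset A_1$ and $V_n\subset A_2$, take the partial sums $t_m=\sum_{n=1}^m 2^{k_n}$ as the independence set, and then appeal to the nested-interval construction from the proof of Theorem~\ref{interval-LY-chaos} to realize every finite binary itinerary along these times. The only cosmetic difference is that you reindex after truncation whereas the paper keeps the original indexing and uses $\{t_N,t_{N+1},\ldots\}$.
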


\begin{proof}
Let $\{U_n\}$, $\{V_n\}$ and $\{k_n\}$ as in Proposition
\ref{interval-covering-map}.  Let $t_n=\sum_{j=1}^n k_n$. If $U, V$
are neighborhood of $u, v$ respectively, then there exists some $N\geq 1$
such that $U_n\subset U$ and $V_n\subset V$ for all $n\geq N$.

For every $s\in \{0,1\}^{\{0,1,\ldots, k\}}$, by the proof of
Theorem \ref{interval-LY-chaos}  there exists a $w_s$ such that for
$0\leq i \leq k$, $f^{t_{N+i}}(w_s)\in U_{N+i+1}$ if $s(i)=0$ and
$f^{t_{N+i}}(w_s)\in V_{N+i+1}$ if $s(i)=1$, then
$$w_s\in \bigcap_{i=0}^{k}f^{-t_{N+i}}A_{s(i)}$$
where $A_0=U$, $A_1=V$.

Thus, $\{t_N, t_{N+1}, \ldots\}$ is an infinite independence set of $(U, V)$.
Therefore,  $\langle u, v\rangle$ is an IT-pair and $(X,T)$ is not
tame since $\langle u, v\rangle$ is not in the diagonal.
\end{proof}

\begin{defn}
Let $(X,T)$ be a TDS and $S, R\subset X$.
\begin{enumerate}
\item $S$ and $R$ are called {\em equivalent} if there exists a bijection $\phi: S\to R$ such that
$$\lim_{n\to \infty}d(f^n(\phi(x)), f^n(x))=0\quad \text{for any }x\in S.$$
\item $S$ and $R$ are called {\em separable} if there exists $\delta>0$ such that
$$\liminf_{n\to \infty}d(f^n(x), f^n(y))\geq \delta\quad \text{for any } x\in S \text{ and }y\in R.$$
\end{enumerate}
\end{defn}

\begin{prop}\label{interval-scr-eq-sep}
Let $f\in C(I,I)$ with $h(f)=0$. Then every two maximal scrambled sets are either equivalent or separable.
\end{prop}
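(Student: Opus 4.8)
The plan is to reduce the statement to the interplay of two equivalence relations on $I$: the proximal relation $Prox_f$, which is an equivalence relation by Theorem \ref{intevla-proximal-equivalence}, and the asymptotic relation $Asym_f=\{\langle x,y\rangle\in I^2:\lim_{n\to\infty}d(f^n(x),f^n(y))=0\}$, which is always an equivalence relation (reflexivity and symmetry are clear, and transitivity follows from $d(f^n(x),f^n(z))\leq d(f^n(x),f^n(y))+d(f^n(y),f^n(z))$). The structural observation driving the argument is that any scrambled set lies inside a single proximal cell: all its proper pairs are proximal, and since $Prox_f$ is transitive, pairwise proximality forces membership in one equivalence class. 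Moreover, inside that cell a scrambled set can meet each asymptotic class in at most one point, since two points of the same asymptotic class form an asymptotic pair, which is not scrambled.

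First I would fix two maximal scrambled sets $S$ and $R$ and split according to whether they lie in a common proximal cell. If $S\subset Prox_f(x)$ and $R\subset Prox_f(y)$ with $Prox_f(x)\neq Prox_f(y)$, then Proposition \ref{interval-proximal-class} directly supplies a $\delta>0$ with $\liminf_{n\to\infty}d(f^n(u),f^n(v))\geq\delta$ for all $u\in Prox_f(x)\supset S$ and $v\in Prox_f(y)\supset R$, so $S$ and $R$ are separable and we are done in this case.

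Next I would treat the main case, where $S$ and $R$ both lie in one proximal cell $P$. Here I would prove that a maximal scrambled set in $P$ is precisely a complete transversal for the asymptotic relation restricted to $P$, meeting every asymptotic class of $P$ in exactly one point. The ``at most one'' direction is the observation above; for ``at least one'' I would use maximality: if an asymptotic class $A\subseteq P$ were disjoint from the scrambled set $C$, then any $z\in A$ would be proximal to every point of $C$ (same cell) but not asymptotic to any of them (different class), so $C\cup\{z\}$ would still be scrambled, contradicting maximality. Consequently $S$ and $R$ index over the same family of asymptotic classes of $P$: each such class $A$ contributes a unique $s_A\in S\cap A$ and a unique $r_A\in R\cap A$. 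I would then define $\phi\colon S\to R$ by $\phi(s_A)=r_A$; this is a bijection, and because $s_A$ and $r_A$ lie in the same asymptotic class we get $\lim_{n\to\infty}d(f^n(\phi(s)),f^n(s))=0$ for every $s\in S$, which is exactly equivalence.

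The hard part will be pinning down this transversal characterization, in particular the completeness direction, which rests on the fact that appending a representative of a missing asymptotic class preserves scrambledness; this step depends essentially on $Prox_f$ being an equivalence relation, so that proximality is automatic throughout $P$. Once the characterization is in place the matching bijection is forced and the asymptotic estimate is immediate, so no further interval-specific analysis is needed to conclude.
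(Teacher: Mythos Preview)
Your proposal is correct and follows essentially the same approach as the paper: split on whether the two maximal scrambled sets lie in the same proximal cell, use Proposition~\ref{interval-proximal-class} for separability in the distinct-cell case, and in the common-cell case observe that a maximal scrambled set meets each asymptotic class of that cell in exactly one point, which yields the desired bijection. You spell out the transversal argument (both ``at most one'' and ``at least one'') more explicitly than the paper, which simply asserts it as easy to see, but the logic is the same.
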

\begin{proof}
Let $S$ and $R$ be two maximal scrambled sets. If $S$ and $R$ are
contained in different proximal cells, then by
Proposition \ref{interval-proximal-class} there exists $\delta>0$
such that
$$\liminf_{n\to \infty}|f^n(x)-f^n(y)|\geq \delta\quad \text{for any } x\in S \text{ and }y\in R.$$
Now assume that $S$ and $R$ are contained in the same proximal
cell $Prox_f(z)$.  We define a relationship $\sim$ in
$Prox_f(z)$. Let $a,b\in Prox_f(z)$, $a \sim b $ iff $\langle a,b
\rangle$ is asymptotic. Then $\sim$ is a equivalence relation on
$Prox_f(z)$. It is easy to see that in $Prox_f(z)$ every maximal
scrambled set contains exactly one representative point for every
one of those $\sim$ equivalent classes. Thus, there exists a
bijection $\phi: S\to R$ such that
\[\lim_{n\to \infty}|f^n(x)- f^n(\phi(x))|=0\qquad \text{for any }x\in S. \qedhere\]
\end{proof}

\begin{rem}
The Proposition \ref{interval-scr-eq-sep} was proved by Balibrea and Lopez in \cite{BL95}.
It seems that they used some lemmas which are only available for piecewise monotone maps.
\end{rem}
{\bf Quesition}: If $h(f)=0$, is every maximal scrambled set uncountable?

\begin{defn}\cite{X05} Let $(X,T)$ be a TDS and $n\geq 2$.  A tuple $\langle x_1, x_2,\ldots,
x_n\rangle \in X^n$ is called {\em $n$-scrambled} if
$$ \liminf_{k\to\infty}\max_{1\leq i<j\leq n}d\left(f^k(x_i), f^k(x_j)\right)
=0,\quad\limsup_{k\to\infty}\min_{1\leq i<j\leq n}d\left(f^k(x_i), f^k(x_j)\right)>0.$$

The system $(X,T)$ (or the map T) is called {\em $n$-chaotic in the sense
of Li-Yorke} if there exists an uncountable subset $S\subset X$ such
that every essential tuple $\langle x_1, x_2,\ldots,\allowbreak x_n \rangle\in S^n$ is $n$-scrambled.
\end{defn}

\begin{thm}\label{interval-3-srca-tuple}
Let $f\in C(I,I)$ with $h(f)=0$. Then there is no $3$-scrambled tuple.
\end{thm}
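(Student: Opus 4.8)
The plan is to argue by contradiction: suppose $\langle x_1, x_2, x_3\rangle$ is a $3$-scrambled tuple and extract from its two defining conditions enough structure to produce three pairwise $f$-nonseparable points, which will be impossible. First I would exploit the $\liminf$ condition. Since $\liminf_{k\to\infty}\max_{1\le i<j\le 3} d(f^k(x_i),f^k(x_j))=0$, there is a sequence $k_m\to\infty$ along which all three pairwise distances tend to $0$; in particular each pair $\langle x_i,x_j\rangle$ is proximal. By Theorem \ref{intevla-proximal-equivalence} the proximal relation is an equivalence relation, so $x_1,x_2,x_3$ lie in one proximal cell, and hence (as noted at the start of this subsection, via Lemma \ref{interval-maximal-omega-set}) the sets $\omega_f(x_1),\omega_f(x_2),\omega_f(x_3)$ all lie in a single maximal $\omega$-limit set $\omega_0$. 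If $\omega_0$ were finite, each proximal pair would in fact be asymptotic (as observed in the proof of Proposition \ref{interval-bd1-proximal}), forcing $\limsup_{k\to\infty}\min_{i<j} d(f^k(x_i),f^k(x_j))=0$ and contradicting $3$-scrambledness; so $\omega_0$ is infinite.

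Next I would use the $\limsup$ condition. Choose $\delta>0$ and a sequence $l_m\to\infty$ with $\min_{1\le i<j\le 3} d(f^{l_m}(x_i),f^{l_m}(x_j))\ge\delta$ for all $m$. By compactness of $I$, after passing to a subsequence we may assume $f^{l_m}(x_s)\to a_s$ for $s=1,2,3$; then $a_s\in\omega_f(x_s)\subset\omega_0$ and the limits $a_1,a_2,a_3$ are pairwise at distance $\ge\delta$, hence distinct. I then claim each pair $\langle a_i,a_j\rangle$ is $f$-nonseparable, which is exactly the argument used for $(2)\Rightarrow(4)$ in Theorem \ref{interval-LY-chaos}: if $\langle a_i,a_j\rangle$ were not $f$-nonseparable, then by Proposition \ref{interval-f-nonsep-eq} there would be a level $k$ and distinct indices $i_0\ne i_1$ with $a_i\in J_k^{i_0}$ and $a_j\in J_k^{i_1}$; using the enlarged intervals of Remark \ref{interval-periodic-portion-3}, for infinitely many $m$ we would have $f^{l_m}(x_i)\in J_{k,s_k}^{i_0}$ and $f^{l_m}(x_j)\in J_{k,s_k}^{i_1}$ with $i_0\ne i_1$, contradicting $c_{x_i}(k)\equiv c_{x_j}(k)\pmod{2^k}$, which holds by Proposition \ref{interval-prox-eq} since $\langle x_i,x_j\rangle$ is proximal.

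Finally I would derive the contradiction from three pairwise $f$-nonseparable points. Order them as $a_1<a_2<a_3$. Since $\langle a_1,a_3\rangle$ is $f$-nonseparable with respect to $\omega_0$, there is a nested sequence $J_1^{j_1}\supset J_2^{j_2}\supset\cdots$ with $K:=\bigcap_{k\ge1}J_k^{j_k}=[a_1,a_3]$, a non-degenerate interval; by Theorem \ref{interval-structure-of-limit-set} (cases (2) and (3)) we then have $K\cap\omega_0=\{a_1,a_3\}$. But $a_2\in\omega_0$ and $a_1<a_2<a_3$, so $a_2\in K\cap\omega_0=\{a_1,a_3\}$, contradicting $a_2\ne a_1,a_3$. (Equivalently, one may invoke the fact recorded above that each point forms an $f$-nonseparable pair with at most one other point, which $a_1$ here violates with both $a_2$ and $a_3$.) This contradiction shows no $3$-scrambled tuple exists. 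The main obstacle is the second step, namely verifying that the three orbital limit points are genuinely pairwise $f$-nonseparable; once that is in place the structure theorem for $\omega$-limit sets closes the argument immediately.
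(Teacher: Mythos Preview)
Your proof is correct and follows essentially the same route as the paper: use proximality of each pair $\langle x_i,x_j\rangle$ together with Proposition~\ref{interval-prox-eq} to force the three limit points $a_1,a_2,a_3\in\omega_0$ into a single nested intersection $\bigcap_k J_k^{r_k}$, and then contradict Theorem~\ref{interval-structure-of-limit-set}, which limits $K\cap\omega_0$ to at most two points. The only cosmetic difference is that you phrase the middle step in the language of $f$-nonseparability (citing the $(2)\Rightarrow(4)$ argument of Theorem~\ref{interval-LY-chaos}), whereas the paper states directly that $a_1,a_2,a_3$ all lie in the same $J_k^{r_k}$; your citation of Proposition~\ref{interval-f-nonsep-eq} there is slightly beside the point, since the separation into distinct $J_k^i$'s comes straight from the definition of the periodic portion rather than from that proposition, but this does not affect the validity of the argument.
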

\begin{proof}
If $\langle x_1, x_2, x_3\rangle\in I^3 $ is a $3$-scrambled tuple,
then  there exists a sequence $\{n_q\}$ in $\mathbb N$, such that $\lim_{q\to\infty}f^{n_q}(x_i)= a_i$
for $i=1,2,3$ and $a_1$,$ a_2$ and $a_3$ are pairwise distinct. Let
$\omega_0$ be the maximal $\omega$-limit set contains
$\omega_f(x_i)$ for $i=1,2,3$, then $\omega_0$ is infinite. Let
$(J_{k,s_k}^i)_{k\geq 1, 0\leq i <2^k}$ be the periodic portion of
$\omega_0$.  Since $\langle x_1, x_2\rangle $ and $\langle
x_1, x_3\rangle$ are proximal, by Proposition \ref{interval-prox-eq},
 for every $k\geq 1$,
$a_1$, $a_2$ and $a_3$ must be in the same $J_k^{r_k}$. Thus,
$\{a_1,a_2,a_3\}\subset \bigcap_{k=1}^{\infty}J_k^{r_k}$ , which
implies $\#(\bigcap_{k=1}^{\infty}J_k^{r_k} \cap \omega_0)\geq 3$.
This contradicts the  structure of $\omega$-limit set (see
Theorem \ref{interval-structure-of-limit-set}).
\end{proof}

\begin{cor}
There exists a TDS which is $2$-chaotic in the sense of Li-Yorke but
does not  have any $3$-scrambled tuple.
\end{cor}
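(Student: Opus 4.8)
The plan is to produce the desired example as a zero entropy interval map, so that Theorem~\ref{interval-3-srca-tuple} immediately forbids $3$-scrambled tuples while Li-Yorke chaos supplies the $2$-chaotic behaviour. First I would recall from the introduction that Xiong \cite{X86} and Smital \cite{S86} constructed maps $f\in C(I,I)$ with $h(f)=0$ that are Li-Yorke chaotic; fix one such $f$ and regard $(I,f)$ as the candidate TDS.

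Next I would check that for such an $f$ being Li-Yorke chaotic is exactly the same as being $2$-chaotic in the sense of Li-Yorke. Indeed, for $n=2$ the only essential index pair is $\{1,2\}$, so in the definition of an $n$-scrambled tuple both the $\max$ and the $\min$ over $1\leq i<j\leq 2$ reduce to $d(f^k(x_1),f^k(x_2))$. Hence a $2$-scrambled tuple $\langle x_1,x_2\rangle$ is precisely a pair with $\liminf_{k\to\infty} d(f^k(x_1),f^k(x_2))=0$ and $\limsup_{k\to\infty} d(f^k(x_1),f^k(x_2))>0$, that is, a scrambled pair. Thus an uncountable scrambled set $S$ for $f$ is exactly an uncountable set witnessing $2$-chaos, and so $(I,f)$ is $2$-chaotic in the sense of Li-Yorke.

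Finally, since $h(f)=0$, Theorem~\ref{interval-3-srca-tuple} tells us that $f$ admits no $3$-scrambled tuple at all. Combining the two facts, $(I,f)$ is a TDS that is $2$-chaotic in the sense of Li-Yorke yet has no $3$-scrambled tuple, which is the assertion of the corollary.

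There is no serious obstacle here, since both ingredients are already at hand: the existence of a zero entropy Li-Yorke chaotic interval map is quoted from \cite{X86,S86}, and the absence of $3$-scrambled tuples is exactly Theorem~\ref{interval-3-srca-tuple}. The only step requiring a line of verification is the identification of $2$-chaos with ordinary Li-Yorke chaos, which is the elementary observation recorded above; one could equivalently invoke Theorem~\ref{interval-LY-chaos} to guarantee a scrambled set from the mere existence of an $f$-nonseparable pair.
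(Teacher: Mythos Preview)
Your proposal is correct and follows essentially the same approach as the paper: the paper's proof simply states that any zero entropy interval map which is Li-Yorke chaotic satisfies the requirement, relying implicitly on the existence results of \cite{X86,S86} and on Theorem~\ref{interval-3-srca-tuple}. Your version spells out the identification of $2$-chaos with ordinary Li-Yorke chaos, which the paper leaves tacit, but the argument is the same.
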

\begin{proof}
Any interval map, which is chaotic in the sense of Li-Yorke and has
zero entropy,  satisfies the requirement.
\end{proof}

\subsection{Sequence entropy pair}
In \cite{FS91}, Franzova and Smital showed that positive sequence topological entropy can characterize Li-Yorke chaos:
\begin{thm}\label{interval-null-vs-chaos} 
Let $f\in C(I,I)$. Then $f$ is Li-Yorke chaotic iff it is not null.
\end{thm}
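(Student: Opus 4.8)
The plan is to prove both directions, and the real content lies only in the forward direction, since the converse follows immediately from earlier machinery. For the easy direction, suppose $f$ is Li--Yorke chaotic. If $h(f)>0$ then $f$ is not null by the positive-entropy theory (positive topological entropy forces positive sequence entropy, as an IE-pair is in particular an IN-pair). So the only case to examine is $h(f)=0$. Here I would invoke Theorem \ref{interval-LY-chaos}: Li--Yorke chaos is equivalent to the existence of an $f$-nonseparable pair $\langle u,v\rangle$. Then Corollary \ref{interval-f-non-sep-it} gives that $\langle u,v\rangle$ is an IT-pair, hence in particular a non-diagonal IN-pair (an infinite independence set certainly contains arbitrarily long finite independence sets). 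By Theorem 2.8(2), a non-diagonal IN-pair is a sequence entropy pair, so $\sup_A h_A(f)>0$, i.e.\ $f$ is not null.

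For the converse, suppose $f$ is not null. The plan is again to split on the entropy. If $h(f)>0$, then by the theorem of Jankov\'a and Smital recalled in the introduction (positive entropy implies Li--Yorke chaos) we are done. So assume $h(f)=0$ but $f$ is not null. By definition there exists some sequence $A$ with $h_A(f)>0$, which produces a non-diagonal IN-pair, i.e.\ an essential sequence entropy pair $\langle u,v\rangle$. The goal is to upgrade this IN-pair into an $f$-nonseparable pair, after which Theorem \ref{interval-LY-chaos} delivers Li--Yorke chaos. The natural route is to show directly that for a zero entropy interval map every essential IN-pair $\langle u,v\rangle$ must arise from the periodic-portion structure: if $u<v$ and no periodic point lies in $(u,v)$ after iteration, then by the $\omega$-limit-set analysis (Proposition \ref{interval-f-nonsep-eq}) the pair is $f$-nonseparable; conversely, if the two coordinates are genuinely separated by the interval structure then one can construct open covers witnessing zero sequence entropy, contradicting that $\langle u,v\rangle$ is an IN-pair.

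I expect the main obstacle to be precisely this last implication, namely showing that a zero-entropy interval map with no $f$-nonseparable pair has $\sup_A h_A(f)=0$. The heart of the difficulty is that having arbitrarily long finite independence sets for a product neighborhood $U_1\times U_2$ of $\langle u,v\rangle$ forces, via the interval structure, that the orbits visit the two neighborhoods in an essentially arbitrary prescribed pattern along arbitrarily long windows. When the pair fails to be $f$-nonseparable, Lemma \ref{interval-key-lemma} guarantees a periodic point $p$ separating the relevant periodic-portion intervals $J_k^{i_1}$ and $J_k^{i_2}$; this separating periodic structure should obstruct the independence, because points cannot freely cross a periodic point's orbit without the interval folding back, bounding the number of realizable patterns polynomially rather than exponentially. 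Making this counting precise is the technical crux, and it is closely parallel in spirit to the pattern-entropy polynomial-order argument promised for null interval maps in Section 6.

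Once that crux is in place, the logic closes cleanly: not null and $h(f)=0$ yields an essential IN-pair, which must be $f$-nonseparable, which by Theorem \ref{interval-LY-chaos} yields Li--Yorke chaos; and not null with $h(f)>0$ yields Li--Yorke chaos directly. Combined with the forward direction, this establishes the equivalence. A shorter alternative, if the direct separation argument proves cumbersome, would be to cite Theorem 2.8 together with the characterization that for zero entropy interval maps the sequence entropy pairs coincide with the $f$-nonseparable pairs (the very statement promised at the end of Section 4's introduction); invoking that result reduces both directions to Theorem \ref{interval-LY-chaos} and the positive-entropy case, at the cost of depending on a later theorem.
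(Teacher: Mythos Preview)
The paper does not prove this theorem at all: it is quoted verbatim from Franzov\'a and Sm\'ital \cite{FS91}, with no argument given. So there is no ``paper's own proof'' to compare against; your proposal is an independent argument assembled from the surrounding machinery.

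Your forward direction is clean and self-contained within the paper's logical order: Theorem~\ref{interval-LY-chaos} and Corollary~\ref{interval-f-non-sep-it} both precede the statement, and together they give Li--Yorke $\Rightarrow$ $f$-nonseparable pair $\Rightarrow$ IT-pair $\Rightarrow$ not null when $h(f)=0$, while the $h(f)>0$ case is immediate. This is a legitimate reproof of that half.

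For the converse you correctly isolate the crux as ``essential IN-pair $\Rightarrow$ $f$-nonseparable'' in the zero-entropy regime. Two remarks. First, your sketched obstruction argument (a separating periodic point forces polynomially many realizable patterns) is \emph{not} how the paper eventually handles this implication: Theorem~\ref{interval-IT-IN-eq}, which appears after the cited statement, proves (3)$\Rightarrow$(1) by a direct interval-covering argument that uses the structural facts of Theorem~\ref{interval-seq-ent-pair-str} (wandering of $[x,y]$, accumulation by periodic points) to build a point whose $\omega$-limit set contains both $x$ and $y$, then invokes Proposition~\ref{interval-f-nonsep-eq}. Second, your ``shorter alternative'' of citing Theorem~\ref{interval-IT-IN-eq} is logically downstream in the paper, and that theorem's proof in turn leans on Theorem~\ref{interval-seq-ent-pair-str}, which is imported from \cite{TYZ09}; so you would need to check that \cite{TYZ09} does not itself rely on \cite{FS91} to avoid circularity. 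If you want a truly self-contained converse, the route in \cite{FS91} is different again: it shows directly that absence of $f$-nonseparable pairs forces the orbit structure to be so rigid that all sequence entropies vanish, via a spanning-set estimate of the flavor you allude to and that reappears in the proof of Theorem~\ref{interval-poloynomial-order}.
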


\begin{thm}\label{interval-null-eq-tame}\cite{GY09}
Let $f\in C(I,I)$. Then $f$ is null iff it is tame.
\end{thm}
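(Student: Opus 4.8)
The plan is to prove the two implications separately, treating ``$f$ null $\Rightarrow$ $f$ tame'' as a general fact about topological dynamical systems and reserving the interval structure for the reverse implication. Throughout I would use the characterizations recalled in Section~2: a system is null iff every IN-pair is diagonal, and tame iff every IT-pair is diagonal.

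First I would dispose of null $\Rightarrow$ tame, which needs no interval-specific input. The key observation is purely combinatorial: any subset of an independence set is again an independence set, so an infinite independence set for a tuple $(U_1,\ldots,U_k)$ contains independence subsets of every finite cardinality. Consequently every IT-tuple is an IN-tuple, and in particular every IT-pair is an IN-pair. Hence, if $f$ is null---so that every IN-pair lies on the diagonal---then every IT-pair lies on the diagonal, which is exactly tameness.

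For tame $\Rightarrow$ null I would argue the contrapositive: assuming $f$ is not null, I will produce a non-diagonal IT-pair, which by the characterization above shows $f$ is not tame. Here I split according to the entropy of $f$. If $h(f)>0$, then since $f$ does not have zero entropy there is a non-diagonal IE-pair; because an IE-pair carries, for each product neighborhood, an independence set of positive density and hence an infinite one, such a pair is automatically a non-diagonal IT-pair. If instead $h(f)=0$, then Theorem~\ref{interval-null-vs-chaos} forces $f$ to be Li-Yorke chaotic (``not null'' being equivalent to Li-Yorke chaos), Theorem~\ref{interval-LY-chaos} then yields an $f$-nonseparable pair, and Corollary~\ref{interval-f-non-sep-it} shows this pair is a non-diagonal IT-pair. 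In either case $f$ fails to be tame, completing the contrapositive.

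The substantive work has in fact already been carried out in the earlier lemmas: the heart of the matter is Corollary~\ref{interval-f-non-sep-it}, asserting that an $f$-nonseparable pair is an IT-pair, which rests on the covering structure of Proposition~\ref{interval-covering-map} and Lemma~\ref{intervl-nonseparable-lemma-new}. Within the present proof the only points demanding care are the verification that finite subsets of independence sets are independence sets (so that IT $\Rightarrow$ IN), and the correct invocation of the null/tame characterizations via IN- and IT-pairs. I expect no genuine obstacle beyond this bookkeeping, since the difficult interval-theoretic content is imported wholesale from the preceding Li-Yorke analysis.
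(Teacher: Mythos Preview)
Your argument is correct, but note that the paper does not actually supply a proof of this theorem: it is stated with a citation to \cite{GY09} and then the exposition moves on. So there is no ``paper's own proof'' to compare against in the strict sense.

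That said, your proof is in harmony with the paper's development and in fact anticipates the content of Theorem~\ref{interval-IT-IN-eq}, which the paper proves immediately afterward. There, for zero-entropy interval maps, the equivalence of $f$-nonseparable, IT-pair, and IN-pair is established directly; combined with the trivial positive-entropy case (non-diagonal IE-pair $\Rightarrow$ non-diagonal IT-pair) and the general inclusion IT $\subset$ IN, this yields exactly the null $\Leftrightarrow$ tame equivalence you have assembled. Your route through Theorem~\ref{interval-null-vs-chaos}, Theorem~\ref{interval-LY-chaos}, and Corollary~\ref{interval-f-non-sep-it} is essentially the same chain of implications, just packaged slightly differently: you pass through Li-Yorke chaos to reach the $f$-nonseparable pair, whereas Theorem~\ref{interval-IT-IN-eq} works directly with the IN-pair hypothesis. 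Both arguments rest on the same underlying interval machinery (Lemma~\ref{intervl-nonseparable-lemma-new} and Proposition~\ref{interval-covering-map}).
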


The structure of the set of IN-pairs (or sequence entropy pairs) was studied by Tan, Ye and Zhang in \cite{TYZ09}.

\begin{thm}\label{interval-3-seq-ent-pair}\cite{TYZ09}
Let $f\in C(I,I)$ with $h(f)=0$. If $f$ is not null, then there exist exactly countable IN-pairs, but no essential $3$-IN-tuple.
\end{thm}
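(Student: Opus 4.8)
The plan is to reduce both assertions to the single statement that, for our map, the non-diagonal IN-pairs are exactly the $f$-nonseparable pairs; once this is available, everything follows from results already in hand. For the counting statement, recall that the set of $f$-nonseparable pairs is either empty or countable, and that, because $f$ is not null, Theorem~\ref{interval-null-vs-chaos} makes it Li-Yorke chaotic, so Theorem~\ref{interval-LY-chaos} furnishes at least one $f$-nonseparable pair; thus the set is nonempty and countable, and the adding-machine description of Theorems~\ref{interval-structure-of-limit-set} and~\ref{interval-adding-machine} shows that each maximal infinite $\omega$-limit set involved contributes countably many such pairs, giving countably many IN-pairs in all. For the second assertion, I use that any sub-tuple of an IN-tuple is again an IN-tuple: an independence set for $(U_1,U_2,U_3)$ restricts, coordinate by coordinate, to an independence set for any two of the three neighborhoods. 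Hence an essential $3$-IN-tuple $\langle x_1,x_2,x_3\rangle$ would have all three of its proper pairs non-diagonal IN-pairs, so all three would be $f$-nonseparable; ordering the points as $x_{(1)}<x_{(2)}<x_{(3)}$, the middle point $x_{(2)}$ would be $f$-nonseparable with both $x_{(1)}$ and $x_{(3)}$, contradicting the fact (established earlier, via the one-sided accumulation of $EP(f)$) that every point has at most one $f$-nonseparable partner.

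One half of the reduction is immediate. If $\langle u,v\rangle$ is $f$-nonseparable, then Corollary~\ref{interval-f-non-sep-it} produces an infinite independence set for every product neighborhood of $\langle u,v\rangle$, so $\langle u,v\rangle$ is an IT-pair; an infinite independence set contains arbitrarily long finite independence sets, hence $\langle u,v\rangle$ is an IN-pair, and it is non-diagonal since $u\neq v$. In particular non-diagonal IN-pairs exist, as must be the case since $f$ is not null.

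The opposite inclusion is the heart of the matter and the step I expect to be the main obstacle: every non-diagonal IN-pair $\langle u,v\rangle$, say $u<v$, must be $f$-nonseparable. By Proposition~\ref{interval-f-nonsep-eq} it is enough to exhibit an infinite $\omega$-limit set $\omega_0$ with $u,v\in\omega_0$ and $(u,v)\cap Per(f)=\emptyset$. I would argue by contraposition: if $\langle u,v\rangle$ is not $f$-nonseparable, then either $(u,v)$ already meets $Per(f)$, or the two points are separated, at some finite level $k$, into distinct intervals $J_k^{r_1},J_k^{r_2}$ of the periodic portion of the relevant maximal $\omega$-limit set; in either case Lemma~\ref{interval-key-lemma} places a periodic orbit strictly between suitably small neighborhoods $U\ni u$ and $V\ni v$. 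The crux is then to show that such a separating periodic orbit forces the independence sets of $(U,V)$ to have bounded length, so that $\langle u,v\rangle$ cannot be an IN-pair. This is where zero entropy is indispensable: passing to the power $f^{2^m}$ that fixes the separating orbit and tracking the itinerary of a hypothetical independence witness relative to those fixed points, I would use the monotone, Markov-like nesting of the intervals $J_k^i$ from Proposition~\ref{interval-periodic-portion-2} together with the absence of horseshoes (equivalent to $h(f)=0$) to bound the number of independent switches between $U$ and $V$. Converting this topological separation into a genuine combinatorial bound on independence is the delicate quantitative step, and it is precisely there that interval structure alone does not suffice and zero entropy must be used.
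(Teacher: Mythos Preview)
The paper does not prove this theorem; it is quoted from \cite{TYZ09} without proof. So there is no ``paper's own proof'' to compare your attempt against. That said, the paper \emph{does} subsequently prove the equivalence you want to reduce to---namely Theorem~\ref{interval-IT-IN-eq}, that non-diagonal IN-pairs coincide with $f$-nonseparable pairs---and once that equivalence is in hand your deduction of both assertions of Theorem~\ref{interval-3-seq-ent-pair} is correct and clean. The subtlety is that the paper's proof of Theorem~\ref{interval-IT-IN-eq} itself invokes Theorem~\ref{interval-seq-ent-pair-str}, which is \emph{also} cited from \cite{TYZ09}; so if you want a self-contained argument you cannot simply appeal to Theorem~\ref{interval-IT-IN-eq} as written.

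Your independent sketch of the hard direction (IN-pair $\Rightarrow$ $f$-nonseparable) has two genuine gaps. First, your dichotomy ``either $(u,v)$ meets $Per(f)$, or $u,v$ lie in distinct intervals $J_k^{r_1},J_k^{r_2}$ of the periodic portion of the relevant maximal $\omega$-limit set'' presupposes that $u$ and $v$ already lie in a common infinite $\omega$-limit set. But this is precisely one of the nontrivial structural facts about IN-pairs (Theorem~\ref{interval-seq-ent-pair-str}(1), again from \cite{TYZ09}) that you have not established; a priori an IN-pair need not have either coordinate in any infinite $\omega$-limit set, and ``not $f$-nonseparable'' certainly does not imply it. Second, even granting a separating periodic point, you yourself flag the passage from ``periodic orbit between $U$ and $V$'' to ``bounded independence sets for $(U,V)$'' as the delicate step, and you do not actually carry it out; a periodic point between two sets does not by itself obstruct independence, and the no-horseshoe argument you gesture at would need to be made precise. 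By contrast, the paper's route in Theorem~\ref{interval-IT-IN-eq} avoids this combinatorial bound entirely: it uses the structural input from \cite{TYZ09} (one-sided accumulation by periodic points, wandering of $[x,y]$) to run a direct covering argument showing $[x,y]$ eventually lies in the interior of $f^n(U_1)\cap f^n(U_2)$, and from that constructs an $\omega$-limit set containing both points.
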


\begin{thm}\label{interval-seq-ent-pair-str}\cite{TYZ09}
Let $f\in C(I,I)$ with $h(f)=0$. If $\langle x, y \rangle$ is an IN-pair and $x<y$, then
\begin{enumerate}
\item both $\omega_f(x)$ and $\omega_f(y)$ are infinite,
\item $[x,y]$ is wandering, i.e. $f^n([x,y])\cap [x,y]=\emptyset$ for all $n\geq 1$,
\item $\langle x, y\rangle $ is asymptotic,
\item $[x, y]\cap EPer(f)=\emptyset$, but $\{x, y\}\cap \overline{Per(f)}\neq \emptyset $ and $\{x,y\}\subset \overline{EPer(f)}$.
\end{enumerate}
\end{thm}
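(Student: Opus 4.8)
The plan is to reduce all four assertions to the structure theory of infinite $\omega$-limit sets developed above, by first showing that an IN-pair is forced to sit inside a single infinite $\omega$-limit set as the two endpoints of one of its complementary ``gaps'' --- in other words, that $\langle x,y\rangle$ is essentially an $f$-nonseparable pair. Concretely, writing $U\ni x$ and $V\ni y$ for small disjoint neighborhoods, the IN-property supplies, for every $L$, a time set $t_1<\cdots<t_L$ and points realizing all $2^L$ itineraries between $U$ and $V$. The first goal is to play this against $h(f)=0$: since a zero-entropy interval map admits no horseshoe, the only mechanism that can produce arbitrarily long independence sets is the infinite nesting of the periodic portion $(J_k^i)$ of some infinite $\omega$-limit set --- this is exactly the mechanism behind Corollary \ref{interval-f-non-sep-it}. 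I therefore expect to prove, as the central lemma, that there is a point $z$ with $\omega_f(z)$ infinite whose periodic portion $(J_k^i)_{k\ge 1,\,0\le i<2^k}$ has a nested branch $J_1^{i_1}\supset J_2^{i_2}\supset\cdots$ with $\bigcap_k J_k^{i_k}=[x,y]$; in particular $x,y\in\omega_f(z)$ and $(x,y)\cap Per(f)=\emptyset$.

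Granting this central lemma, Part (1) is immediate, since $x,y\in\omega_f(z)$ and, being endpoints of a nondegenerate $K=[x,y]$, they fall under case (2) or (3) of Theorem \ref{interval-structure-of-limit-set}, each of which forces $\omega_f(x),\omega_f(y)$ infinite; and $\langle x,y\rangle$ is $f$-nonseparable by Proposition \ref{interval-f-nonsep-eq}. Part (2) then follows directly from the wandering property of $f$-nonseparable pairs established above. For Part (3) I would use that a wandering interval has pairwise disjoint forward images inside $[0,1]$, so $\sum_{n\ge 0}\operatorname{diam} f^n([x,y])\le 1$ and hence $\operatorname{diam} f^n([x,y])\to 0$; since $x,y$ are the endpoints of $[x,y]$, this gives $|f^n(x)-f^n(y)|\to 0$, i.e. $\langle x,y\rangle$ is asymptotic.

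For Part (4), the disjointness $[x,y]\cap EP(f)=\emptyset$ comes from $[x,y]\subset C(z)$ together with $C(z)\cap Per(f)=\emptyset$ and the wandering of $[x,y]$ (no forward image meets $Per(f)$, exactly as in the proof of the wandering lemma). The inclusion $\{x,y\}\subset\overline{EP(f)}$ holds because $x,y\in\omega_f(z)\subset\overline{EP(f)}$, while $\{x,y\}\cap\overline{Per(f)}\neq\emptyset$ should be read off the recurrence trichotomy of Theorem \ref{interval-structure-of-limit-set} together with the adding-machine picture of Theorem \ref{interval-adding-machine}: the endpoint that is regularly recurrent is a limit of the periodic points of period $2^k$ sitting in the corresponding $J_k^{i_k}$ on its side. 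Here I would invoke the \emph{no essential $3$-IN-tuple} half of Theorem \ref{interval-3-seq-ent-pair} to guarantee that $x,y$ are precisely the two endpoints of the gap (ruling out a third point of $\omega_f(z)$ in $[x,y]$) and, if needed, to exclude the symmetric case (2) of the trichotomy, so that a regularly recurrent endpoint is always available.

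The main obstacle is the central lemma itself: converting the purely combinatorial IN-hypothesis (arbitrarily long independence sets for every pair of neighborhoods of $x$ and $y$) into the geometric statement that $[x,y]$ is a gap of a Cantor-type $\omega$-limit set. The two delicate points are (i) ruling out $\omega_f(x)$ finite --- a periodic $\omega$-limit set offers only finitely many scales of nesting, so in the absence of a horseshoe it can support only bounded independence sets, an implication I must make quantitative; and (ii) ruling out a periodic point in the open interval $(x,y)$, which I would again approach through the no-horseshoe characterization of $h(f)=0$ together with Lemma \ref{interval-key-lemma} and Lemma \ref{interval-Ruette-lemma}. Everything after the central lemma is bookkeeping with the structure theorems already in place.
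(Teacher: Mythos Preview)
The paper does not give its own proof of this statement: Theorem \ref{interval-seq-ent-pair-str} is cited from \cite{TYZ09} and stated without argument. So there is no in-paper proof to compare against.

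That said, your strategy is worth flagging because it inverts the paper's logical order. Your ``central lemma'' --- that an IN-pair must be $f$-nonseparable --- is precisely the implication (3)$\Rightarrow$(1) of Theorem \ref{interval-IT-IN-eq}, which appears \emph{after} Theorem \ref{interval-seq-ent-pair-str} and whose proof explicitly uses parts (2) and (4) of it (the wandering of $[x,y]$ and the existence of a periodic accumulation point among $\{x,y\}$). Thus you cannot borrow the paper's argument for your central lemma without circularity; you would need an independent proof, and your proposal only names this as ``the main obstacle'' without supplying one. The approaches (i) and (ii) you sketch for the central lemma are vague, and in particular the step ``$\omega_f(x)$ finite $\Rightarrow$ only bounded independence sets in the absence of a horseshoe'' is exactly the nontrivial content being cited from \cite{TYZ09}.

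Granting the central lemma, your deductions are mostly fine, with two corrections. For (1), the recurrence trichotomy in Theorem \ref{interval-structure-of-limit-set} does not directly say $\omega_f(x),\omega_f(y)$ are infinite; the clean argument is that $x,y\in\omega_f(z)$ forces $\omega_f(x),\omega_f(y)\subset\omega_f(z)$, and a finite $\omega$-limit set would be a periodic orbit properly contained in the infinite $\omega_f(z)$, contradicting Theorem \ref{interval-zero-entropy}(3). For the clause $\{x,y\}\cap\overline{Per(f)}\neq\emptyset$ in (4), you should not try to exclude case (2) of the trichotomy (it genuinely occurs); rather, use Proposition \ref{interval-periodic-portion-2}(3),(4): since each $J_{k+1}^{i_{k+1}}$ shares an endpoint with $J_k^{i_k}$, one endpoint of $[x,y]$ coincides with an endpoint of $J_k^{i_k}$ for infinitely many $k$, and the periodic points of period $2^k$ in the adjacent $J_k$-intervals accumulate there.
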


\begin{thm}\label{interval-IT-IN-eq}
Let $f\in C(I,I)$ with $h(f)=0$ and $x<y \in I$. Then the following conditions are equivalent:
\begin{enumerate}
\item $\langle x , y \rangle$ is $f$-nonseparable;
\item $\langle x , y \rangle$ is an IT-pair;
\item $\langle x , y \rangle$ is an IN-pair.
\end{enumerate}
\end{thm}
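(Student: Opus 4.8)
The plan is to close the cycle $(1)\Rightarrow(2)\Rightarrow(3)\Rightarrow(1)$, since two of the three links are essentially free. The implication $(1)\Rightarrow(2)$ is exactly Corollary \ref{interval-f-non-sep-it}. For $(2)\Rightarrow(3)$ I would merely unwind the definitions: an IT-pair admits, for every product neighborhood $U\times V$, an \emph{infinite} independence set $J$, and by the defining property every nonempty finite subset of $J$ is again an independence set; hence $(U,V)$ has arbitrarily long finite independence sets and $\langle x,y\rangle$ is an IN-pair. Thus the entire content of the theorem is concentrated in $(3)\Rightarrow(1)$.

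For $(3)\Rightarrow(1)$ I would feed the IN-pair $\langle x,y\rangle$ with $x<y$ into the structure Theorem \ref{interval-seq-ent-pair-str}. That theorem gives immediately that $\omega_f(x)$ and $\omega_f(y)$ are infinite and that $\langle x,y\rangle$ is asymptotic, so $\omega_f(x)=\omega_f(y)=:W$ is a single infinite $\omega$-limit set; it also gives $[x,y]\cap EPer(f)=\emptyset$, whence $(x,y)\cap Per(f)=\emptyset$. Comparing with the characterization in Proposition \ref{interval-f-nonsep-eq}, the pair $\langle x,y\rangle$ will be $f$-nonseparable as soon as I can place \emph{both} $x$ and $y$ inside one infinite $\omega$-limit set. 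Since $W$ is already such a set, it suffices to prove $x,y\in W$, i.e. that the coordinates of the IN-pair are recurrent; the hypothesis $(x,y)\cap Per(f)=\emptyset$ is then supplied for free and Proposition \ref{interval-f-nonsep-eq} finishes the proof. (One checks as well that $W$ is forced to be the maximal $\omega$-limit set meeting $x$: any other infinite $\omega$-limit set containing $x$ would contain $\omega_f(x)=W$, contradicting maximality, so $x$ lies in \emph{no} infinite $\omega$-limit set unless $x\in W$.)

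The heart of the argument, and the step I expect to be the main obstacle, is therefore exactly the membership $x,y\in W$, which I would attack through the rigid interval picture of $W$. Let $(J_k^i)$ be the periodic portion of $W$ and $C=\bigcap_k\bigcup_i J_k^i$ as in Theorem \ref{interval-structure-of-limit-set}; recall that the endpoints of the $J_k^i$ lie in $W$ and that the new level-$(k+1)$ gaps are carved out of the interiors of the $J_k^i$, so the complementary gaps of $C$ only accumulate and the gap (or component) containing a fixed point stabilizes. If $x\in C$, then $x$ lies in a component $K$ of $C$: when $K$ is a singleton, or when $x$ is an endpoint of a nondegenerate $K=[c,d]$, the structure theorem gives $x\in W$ at once; and when $x$ lies in the \emph{interior} of such a nondegenerate $K$, one uses that $K$ is wandering, so a small neighborhood of $x$ inside $K$ is wandering and no orbit can visit it twice — contradicting the existence of length-$2$ independence sets, and thereby excluding this case. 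The remaining, and delicate, case is $x\notin C$, where $x$ sits in the interior of a genuine gap $G=(a,b)$ with $a,b\in W$; by Lemma \ref{interval-key-lemma} such a gap carries a periodic point (necessarily off $(x,y)$), so $G$ is \emph{not} wandering and the easy argument above breaks down. Here I would exploit the full independence of the IN-pair — placing $U\ni x$, $V\ni y$ inside the gaps, using arbitrarily long independence sets together with the wandering of $[x,y]$ (Theorem \ref{interval-seq-ent-pair-str}(2)) and the covering/periodic structure of Lemma \ref{interval-Ruette-lemma} — to manufacture orbits returning to arbitrarily small neighborhoods of $x$ in a pattern incompatible with $x$ lying off $W$, forcing $x\in W$ and symmetrically $y\in W$. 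Converting ``arbitrarily many independent returns near $x$'' into honest $\omega$-limit membership, in the presence of the non-wandering periodic points that crowd the gap, is the technical crux on which the whole implication turns.
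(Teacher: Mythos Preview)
Your cycle $(1)\Rightarrow(2)\Rightarrow(3)$ is fine and matches the paper. The problem is the strategy for $(3)\Rightarrow(1)$: you set $W=\omega_f(x)=\omega_f(y)$ and aim to prove $x,y\in W$, i.e.\ that both coordinates are recurrent. That target is in general \emph{false}. Case~(3) of Theorem~\ref{interval-structure-of-limit-set} exhibits components $K=[a,b]$ of $C(z)$ with $\{a,b\}\subset\omega_f(z)$ and one endpoint --- say $b$ --- not recurrent; the pair $\langle a,b\rangle$ is $f$-nonseparable by definition, hence by the easy direction an IN-pair, yet $b\notin\omega_f(b)$. For this pair your $W=\omega_f(a)=\omega_f(b)$ is minimal (since $a$ is regularly recurrent), so every component of $C(W)$ is a singleton and $C(W)=W$; thus $b\notin C(W)$, and you land precisely in your ``delicate case $x\notin C$'' with the desired conclusion $b\in W$ outright false, not merely hard. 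No amount of independence can force $b$ into $W$, because it simply is not there.

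The paper avoids recurrence entirely. From Theorem~\ref{interval-seq-ent-pair-str} it uses only the wandering of $[x,y]$ and the presence of periodic points near $\{x,y\}$; then, by a case analysis on the positions of nearby fixed points relative to $[x,y]$ --- using connectedness together with the IN-independence to push orbits across $[x,y]$ --- it establishes a covering statement of the same shape as Lemma~\ref{intervl-nonseparable-lemma-new}: for all neighborhoods $U_1\ni x$, $U_2\ni y$ there is $n$ with $[x,y]\subset\operatorname{int}\bigl(f^{n}(U_1)\cap f^{n}(U_2)\bigr)$. Iterating this as in Proposition~\ref{interval-covering-map} and the proof of Theorem~\ref{interval-LY-chaos} produces a point $z$ (generally different from $x$ and $y$) with $\{x,y\}\subset\omega_f(z)$, after which Proposition~\ref{interval-f-nonsep-eq} applies. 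The essential shift is from ``show $x$ lies in its own $\omega$-limit set'' to ``build some $\omega$-limit set containing both $x$ and $y$.''
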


\begin{proof}
(1)$\Rightarrow$(2) is proved in Corollary \ref{interval-f-non-sep-it} and (2)$\Rightarrow$(3) is trivial.

It remains to show (3)$\Rightarrow$(1). Assume that $\langle x , y
\rangle$ is an IN-pair.  Without loss of generality, assume that $x$ is a
limit point of $Per(f)$ from the left. Let $U_1$ and $U_2$ be two
disjoint connected neighborhoods of $x$ and $y$ respectively. Then
there are periodic points $p$, $q$ and $n\geq 1$ such that $p\in
U_1$ and $q\in f^n(U_2)$. Without loss of generality, assume that $p$ and
$q$ are fixed points and $n=1$, since the periods of $p, q$ are the
powers of $2$ and $\langle x,y \rangle$ is also an IN-pair for $f^n$
for every $n\geq 1$.

{\bf Claim: } There exists $n\geq 1$ such that the subinterval $[x,y]$ is in the interior of $f^{n}(U_1)\cap f^{n}(U_2)$.

{\bf Proof of the Claim: }Clearly, we have $p<x$, but there are two cases about the position of $q$.

Case 1, $q>y$. (a) $f(x)>y, f(y)>y$. Since $q$ is a fixed point
and $f(x)>y$, then by the connectedness $[x,y]$ is in the interior of
$f^k(U_1)$ for all $k>1$. Since $\langle x,y\rangle$ is an IN-pair,
there exist $z\in U_2$ and $1<n_1<n_2$ such that $f^{n_1}(z)\in U_1$
and $f^{n_2}(z)\in U_1$. Then  one of $f^{n_1}(z) , f^{n_2}(z)$ must
be on the left side of $x$ since $[x,y]$ is wandering. Since $q>y$, $q$
is a fixed point and $q\in f(U_2)$, by the connectedness we have $[x,y]$
is in the interior of $f^{n_1}(U_2)$ or $f^{n_2}(U_2)$. (b)
$f(x)<x, f(y)<x$. This is the symmetric case of (a).

Case 2, $q<x$. (a) There exists some $k\geq 1$ such that $f^k(x)>y,
f^k(y)>y$. Without loss of generality, we can assume $k=1$. Since
$q$ is a fixed point and $f(x)>y$, then by the connectedness $[x,y]$ is
in the interior of $f(U_1)$. Since $p\in f(U_2)$ and $f(y)>y$, then
by the connectedness $[x,y]$ is in the interior of $f(U_2)$. (b) We
have $f^k(x)<x, f^k(y)<x$ for all $k\geq 1$. Then $f^k([x,y])$ is in
the left of $[x,y]$  for all $k\geq 1$. Thus if $z\in U_2$ and
$f^n(z)\in U_2$ for some $n\geq 1$, then $z>y$. Since $\langle
x,y\rangle$ is an IN-pair, there exist $u\in U_1, v\in U_2$ and
$1<n_1<n_2$ such that $f^{n_1}(u)\in U_2$,  $f^{n_2}(u)\in U_2$ and
$f^{n_1}(v)\in U_2$,  $f^{n_2}(v)\in U_2$. Then $f^{n_1}(u)>y$ and
$f^{n_1}(v)>y$. Thus $[x,y]$ is in the interior of $f^{n_1}(U_1)$
and $f^{n_1}(U_2)$. This completes the proof of the Claim.

Similarly to the usage of Lemma \ref{intervl-nonseparable-lemma-new}
to prove Theorem  \ref{interval-LY-chaos}, we can get that there
exists some $z\in I$ such that $x, y\in \omega_f(z)$. Thus, by
Proposition \ref{interval-f-nonsep-eq} $\langle x, y\rangle$ is
$f$-nonseparable.
\end{proof}

\section{Positive entropy Maps}

\begin{thm} \label{interval-postive-entropy-eq1}
Let $f\in C(I,I)$. Then the following conditions are equivalent:
\begin{enumerate}
\item $h(f)>0$;
\item there exists a subsystem which is Devaney chaotic \cite{L93};
\item there exists $n\geq 1$ such that $f^n$ has a strongly mixing subsystem \cite{XY92};
\item $f$ is distributionally chaotic \cite{SS94}.
\end{enumerate}
\end{thm}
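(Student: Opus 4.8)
The plan is to route all four conditions through the classical horseshoe description of positive entropy and to use the interval $\omega$-limit structure already developed. Recall that by Theorem \ref{interval-zero-entropy}, $h(f)>0$ is equivalent to the existence of a periodic point whose period is not a power of two; via the Sharkovskii ordering this forces a \emph{horseshoe} in some iterate, namely $m\geq 1$ and disjoint closed subintervals $K_0,K_1\subset I$ with $f^m(K_0)\cap f^m(K_1)\supset K_0\cup K_1$ (the Misiurewicz characterization, see \cite{Block1992}). I would take this horseshoe equivalence as the hub: prove that a horseshoe produces each of (2), (3), (4), and that each of (2), (3), (4) in turn forces $h(f)>0$.

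For the forward direction I would build symbolic dynamics from the horseshoe. Writing $g=f^m$, the covering relations $g(K_i)\supset K_0\cup K_1$ allow one to assign to each word $w\in\{0,1\}^{\mathbb Z_+}$ a nested sequence of subintervals, hence a point realizing $w$ as its $g$-itinerary; this yields a closed $g$-invariant set $\Lambda$ with a continuous surjection $\pi\colon(\Lambda,g)\to(\{0,1\}^{\mathbb Z_+},\sigma)$ onto the full two-shift. Since the full shift is Devaney chaotic, strongly mixing and distributionally chaotic, I would transfer these features to $f$: pulling transitive and periodic points back through $\pi$ gives a Devaney chaotic subsystem, establishing (2); locating an invariant subinterval inside the horseshoe on which a further iterate is transitive, and then promoting transitivity to mixing (passing if necessary to $f^{2}$ on the two subintervals it permutes), gives a strongly mixing interval subsystem, establishing (3); and the two independent full branches produce a distributionally scrambled Cantor set, establishing (4). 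The detailed constructions are exactly those carried out in \cite{L93}, \cite{XY92} and \cite{SS94}, so I would invoke them for the bookkeeping.

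For the converses I would lean on the special structure of interval maps rather than general topological dynamics. A strongly mixing subsystem of (3) lives on a non-degenerate invariant interval and is in particular transitive there, and a transitive interval map has positive entropy, so $h(f^n)>0$ and hence $h(f)=\tfrac1n h(f^n)>0$; this gives $(3)\Rightarrow(1)$. A Devaney chaotic subsystem of (2) is transitive with dense periodic points, and by the same interval-specific positivity of entropy for transitive interval maps (or by extracting a horseshoe from it) one obtains $h(f)>0$; I would emphasize that this step genuinely needs the interval, since for a general TDS neither strong mixing nor Devaney chaos forces positive entropy. Finally $(4)\Rightarrow(1)$ is precisely the Schweizer--Smital theorem \cite{SS94}, that distributional chaos and positive entropy coincide for interval maps.

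The main obstacle I anticipate is the passage from the bare horseshoe to condition (3): the symbolic construction only delivers a semiconjugacy onto the shift on a Cantor set, whereas strong mixing is demanded on a genuine subsystem, and the upgrade requires locating an invariant subinterval on which some iterate is transitive and then using the interval-specific dichotomy that promotes transitivity to (cyclic) mixing. The distributional estimate in (4) is the other delicate point, since one must control the upper and lower distribution functions along the two branches simultaneously; I would handle this with the same itinerary bookkeeping used to build the $\delta$-scrambled Cantor set in Theorem \ref{interval-LY-chaos}, but replacing the mere $\liminf$/$\limsup$ conditions by the stronger density conditions that distributional chaos requires.
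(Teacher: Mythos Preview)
The paper itself does not prove this theorem: each of the four items carries an inline citation (\cite{L93}, \cite{XY92}, \cite{SS94}) and the result is simply recorded as known background for use later in Section~5. So there is no in-paper argument to compare against; your sketch is already more than what the paper supplies.

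That said, your converse directions $(2)\Rightarrow(1)$ and $(3)\Rightarrow(1)$ contain a genuine gap. You assert that a strongly mixing subsystem ``lives on a non-degenerate invariant interval'' and then invoke positivity of entropy for transitive interval maps. But a \emph{subsystem} is an arbitrary closed invariant subset, and the strongly mixing (or Devaney chaotic) subsystems produced by horseshoes are typically Cantor sets, not intervals; nothing in condition~(2) or~(3) forces the carrier to be connected. Since there do exist abstract zero-entropy strongly mixing systems, you cannot conclude $h(f)>0$ from strong mixing alone without using something specific about the ambient interval map. The correct interval-specific route is the zero-entropy structure theory already developed in Section~3: if $h(f)=0$, every infinite transitive subsystem sits inside an infinite $\omega$-limit set and is, by Theorem~\ref{interval-adding-machine}, at most a two-to-one extension of the adding machine, hence equicontinuous-like and certainly not weakly mixing; moreover $C(x)\cap Per(f)=\emptyset$ by Theorem~\ref{interval-structure-of-limit-set}, so periodic points cannot be dense. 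Either observation rules out (2) and (3) when $h(f)=0$. Your handling of $(4)\Leftrightarrow(1)$ by citing \cite{SS94} is exactly what the paper does, and your forward constructions from a horseshoe are standard and in line with the cited sources.
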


\begin{defn}
\cite{X05} Let $(X,T)$ be a TDS and $n\geq 2$. $(X,T)$ is said to be
{\em $n$-sensitive},  if there exists some $\delta>0$  such that for every
nonempty open subset $U\subset X$, there exist $n$ distinct points
$x_1, x_2, \ldots, x_n \in U$ and $k\geq 1$ satisfying $\min_{1\leq
i< j\leq n}\{d(f^k(x_i), f^k(x_j))\}\geq \delta$.
\end{defn}

\begin{thm} \label{Devaney-chaos-thm}
Let $(X,T)$ be a TDS. If $(X,T)$ is Devaney chaotic, then
\begin{enumerate}
\item $(X,T)$ is infinite sensitive (i.e. $n$-sensitive for all $n\geq 2$) \cite{YZ08},
\item $(X,T)$ is infinite chaotic in the sense of Li-Yorke (i.e. there is an uncountable subset $S$ of X
which is $n$-scrambled for all $n\geq 2$) \cite{X05}.
\end{enumerate}
\end{thm}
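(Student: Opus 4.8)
The plan is to reduce both statements to the weakly mixing case by passing to the fibres of the maximal equicontinuous factor. Throughout I assume $X$ is infinite, since otherwise Devaney chaos forces $X$ to be a single periodic orbit and the assertions are vacuous. As the periodic points are dense, $(X,T)$ is a non-minimal $M$-system and is in particular sensitive. I would first examine its maximal equicontinuous factor $\pi\colon X\to X_{eq}$: being transitive and equicontinuous, $X_{eq}$ is a minimal group rotation, and the images under $\pi$ of the dense periodic points are dense periodic points of $X_{eq}$. A minimal group rotation possessing a periodic point is a finite cyclic rotation, so $X_{eq}=\mathbb{Z}/d$ for some $d\ge 1$ and $\pi$ has clopen fibres. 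I then set $X_0=\pi^{-1}(0)$ and $g=T^d$; thus $(X_0,g)$ is an infinite transitive system with dense periodic points, and each remaining fibre $\pi^{-1}(r)$ carries a conjugate copy of it via $T^r$.

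The key step, and the main obstacle, is to prove that $(X_0,g)$ is \emph{weakly mixing}. Here I would use that any nontrivial equicontinuous factor of a system with dense periodic points is again a finite cyclic rotation; were $(X_0,g)$ not weakly mixing it would admit such a factor $\mathbb{Z}/e$ with $e\ge 2$, and transporting this structure along $T$ across the $d$ fibres would yield a $\mathbb{Z}/(de)$ equicontinuous factor of $(X,T)$, contradicting the maximality of $X_{eq}$. Hence $(X_0,g)$, and each of its conjugate copies on $\pi^{-1}(r)$, is weakly mixing; being infinite it is moreover perfect. Making this gluing argument precise, namely checking that an equicontinuous factor of $(X_0,g)$ is carried $T$-equivariantly onto the neighbouring fibres to assemble a genuine factor of $(X,T)$, is where the real work lies.

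Granting weak mixing, part (1) is straightforward. I fix $n$ points $a_1,\dots,a_n\in X_0$ that are pairwise $3\delta$-apart, possible since $X_0$ is infinite, and I do this uniformly over the finitely many fibres. Given a nonempty open $U\subset X$, the clopenness of the fibres lets me shrink $U$ to lie inside a single fibre $\pi^{-1}(r)$. Weak mixing of $(\pi^{-1}(r),g)$ makes $g\times\cdots\times g$ ($n$ factors) transitive on $\pi^{-1}(r)^{n}$, so there is a common time $k$ and a tuple $(x_1,\dots,x_n)\in U^{n}$ with each $g^{k}(x_i)$ within $\delta/2$ of the appropriate translate of $a_i$. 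These $x_i$ are distinct, lie in $U$, and satisfy $\min_{1\le i<j\le n}d(T^{kd}x_i,T^{kd}x_j)\ge\delta$; hence $(X,T)$ is $n$-sensitive with constant $\delta$, for every $n\ge 2$.

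For part (2) I would work inside $(X_0,g)$ and apply the Kuratowski--Mycielski theorem, transferring the result to $T$ afterwards: replacing the full time set by $d\mathbb{Z}$ only decreases the $\liminf$ and increases the $\limsup$ in the definition of $n$-scrambled, so an $n$-scrambled set for $g=T^{d}$ is automatically $n$-scrambled for $T$. In $X_0^{n}$, for each $m$ I consider the set $P_m$ of tuples with $\max_{i<j}d(g^{k}x_i,g^{k}x_j)<1/m$ for some $k$, and the set $Q_m$ of tuples with $\min_{i<j}d(g^{k}x_i,g^{k}x_j)>\delta$ for some $k>m$. Both are open, and weak mixing makes them dense: transitivity of the $n$-fold product lets one steer any open box near the diagonal (for $P_m$) or onto a fixed $3\delta$-separated configuration (for $Q_m$), with infinitely many available hitting times securing $k>m$. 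Thus $R=\bigcap_m(P_m\cap Q_m)$ is a dense $G_\delta$ in $X_0^{n}$, and every essential tuple in $R$ is $n$-scrambled with $\limsup$ at least $\delta$. Since $X_0$ is perfect, Mycielski's theorem produces a Cantor set $S\subset X_0$ all of whose essential $n$-tuples lie in $R$, and this $S$ witnesses $n$-chaos in the sense of Li--Yorke for every $n\ge 2$. The only genuine difficulty in this half is again the input that $(X_0,g)$ is weakly mixing; once that is secured, the density of $P_m$ and $Q_m$ and the Mycielski extraction are routine.
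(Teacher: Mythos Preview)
The paper does not prove this theorem at all: it is quoted as a known result, with part (1) attributed to Ye--Zhang \cite{YZ08} and part (2) to Xiong \cite{X05}. So there is no ``paper's proof'' to compare against; I can only assess your attempt on its own merits.

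Your overall strategy---pass to a weakly mixing piece, then use transitivity of the $n$-fold product together with Mycielski---is sound and is close to how results of this type are actually proved. But the reduction step has two genuine gaps. First, the implication ``$(X_0,g)$ not weakly mixing $\Rightarrow$ $(X_0,g)$ has a nontrivial equicontinuous factor'' is \emph{not} a general fact about transitive systems, and even with dense periodic points it is delicate: Banks' regular periodic decomposition produces closed pieces that may share boundary points, so the obvious map onto $\mathbb{Z}/e$ need not be continuous and hence need not give an equicontinuous factor to contradict maximality. Second, your claim that the fibres $\pi^{-1}(r)$ are mutually conjugate via $T^r$ presupposes that $T$ is a homeomorphism, which is not assumed. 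Both issues are repairable, but the cleanest route is to bypass the equicontinuous factor entirely and invoke Banks' theorem directly: a transitive system with dense periodic points is either totally transitive (hence weakly mixing) or admits a regular periodic decomposition $X=D_0\cup\cdots\cup D_{d-1}$ with each $(\overline{\mathrm{int}\,D_i},T^d)$ again Devaney chaotic; iterating, one reaches a weakly mixing piece without needing invertibility or a continuous factor map.

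One smaller point on part (2): your construction of the residual set $R\subset X_0^n$ and the Mycielski extraction are written for a \emph{fixed} $n$, yet you conclude that the resulting $S$ is $n$-scrambled for \emph{every} $n\ge 2$. This is true, but only because the Kuratowski--Mycielski theorem permits one to impose countably many relations $R_n\subset X_0^n$ (one for each $n$) simultaneously; you should say so explicitly rather than treat a single $n$.
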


\begin{lem} \label{tran-sys-lemma}
Let $(X,T)$ be a TDS. If $(X,T)$ is transitive and has two periodic points, then
\begin{enumerate}
\item $Prox_T$ is not an equivalence relation,
\item there are two maximal scrambled sets which are neither equivalent nor separable.
\end{enumerate}
\end{lem}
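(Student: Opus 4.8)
The plan is to exhibit concrete witnesses to the failure of both properties, built from the two periodic points together with a transitive point. Throughout, write $\mathrm{orb}(p)$ for the orbit of a periodic point $p$, and record two elementary facts. First, if $p\ne p'$ are periodic, then $\langle p,p'\rangle$ is \emph{not} proximal: their orbits are either disjoint, or $p,p'$ occupy distinct positions of one cyclic orbit, so $n\mapsto d(T^n p,T^n p')$ is periodic and never zero, whence $\liminf_n d(T^n p,T^n p')=\delta_0>0$. Second, proximality under $T^M$ implies proximality under $T$ for any $M\ge 1$, since the subsequence along multiples of $M$ already witnesses it.

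The first key step is the claim that if $x$ is a transitive point (its forward orbit is dense) and $p$ has period $m$, then $x$ is proximal to \emph{some} point of $\mathrm{orb}(p)$. To prove it I would decompose $X=\bigcup_{r=0}^{m-1}X_r$ with $X_r=\overline{\{T^n x:\ n\equiv r\ (\mathrm{mod}\ m)\}}$, so that $\bigcup_r X_r=X$ and $T(X_r)\subseteq X_{r+1}$. Choosing any $c$ with $T^c p\in X_{r_0}$ and setting $s\equiv c-r_0\ (\mathrm{mod}\ m)$, one checks along the times $n_k\equiv r_0$ with $T^{n_k}x\to T^c p$ that $T^{n_k}(T^s p)=T^c p$, so that $\langle x,T^s p\rangle$ is proximal.

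Next I would treat the generic case in which $p_1,p_2$ lie on \emph{different} orbits. By the claim a fixed transitive point $z$ is proximal to some $q_1\in\mathrm{orb}(p_1)$ and to some $q_2\in\mathrm{orb}(p_2)$; since these lie on different orbits, $\langle q_1,q_2\rangle$ is not proximal, so $\langle q_1,z\rangle$ and $\langle z,q_2\rangle$ are proximal while $\langle q_1,q_2\rangle$ is not, giving (1). For (2), note $\langle z,q_1\rangle$ is scrambled: it is proximal, and were it asymptotic, combining $d(T^n z,T^n q_1)\to 0$ with $\liminf_n d(T^n z,T^n q_2)=0$ through the triangle inequality would force $\liminf_n d(T^n q_1,T^n q_2)=0$, contradicting $\delta_0>0$; the same reasoning makes $\langle z,q_2\rangle$ scrambled. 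Extend $\{z,q_1\}$ and $\{z,q_2\}$ to maximal scrambled sets $S_1,S_2$ by Zorn's Lemma. They are not separable since $z\in S_1\cap S_2$, and they are not equivalent: an asymptotic bijection $\phi$ would make $\phi(q_1)\in S_2$ asymptotic to $q_1$, and as $\phi(q_1)$ is proximal to $q_2$ (both lie in the scrambled set $S_2$), the triangle inequality again destroys the $\delta_0$-separation of $q_1,q_2$.

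The hard part is the residual case in which \emph{all} periodic points lie on a single periodic orbit $\{p_0,\dots,p_{m-1}\}$ of period $m\ge 2$, for then a transitive point may be proximal to only one $p_i$, and no single point need be proximal to two points of one orbit, so the witness above is unavailable. Here I would pass to $S=T^m$, for which every $p_i$ is fixed, decompose the orbit closure into the $S$-invariant transitive pieces $Y_r=\overline{\mathrm{orb}_S(T^r x)}$, and use the easy sub-fact that inside such a piece a transitive point is proximal to a \emph{fixed} point (density of the orbit forces the $\liminf$ of the distance to zero), transferring $S$-proximality back to $T$ by the second elementary fact. The genuine obstruction is to align the cyclic phases so as to link $p_0$ and $p_1$ by a chain of proximal pairs; I expect to clear it by invoking the dichotomy that a transitive system is either sensitive or almost equicontinuous, excluding the rigid alternative in the presence of a nontrivial periodic orbit on an infinite space.
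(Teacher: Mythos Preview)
Your handling of the case where $p_1,p_2$ lie on distinct periodic orbits is correct and is essentially the paper's argument, stated more carefully. The paper simply writes ``without loss of generality, we assume that there are two fixed points $p_1$ and $p_2$, since $\langle x_1,x_2\rangle$ is proximal for $T$ iff so is for $T^n$,'' and then uses that a transitive point is proximal to any fixed point. Your version---showing that a transitive point is proximal to \emph{some} point on each periodic orbit---is the precise content behind that reduction (note the paper's passage to $T^n$ does not preserve transitivity, so its WLOG is a bit loose; your formulation avoids this). Your argument for part~(2) likewise matches the paper's, and you are more explicit in checking that $\langle z,q_i\rangle$ is genuinely scrambled.

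Your worry about the ``residual case'' (a single periodic orbit) is well founded, but your proposed patch via the sensitive/almost-equicontinuous dichotomy cannot succeed: the lemma as literally stated is \emph{false} in that case. Take $X=\{0,1\}$ with $T$ the swap; this system is transitive with two periodic points, yet $Prox_T$ is the diagonal, an equivalence relation. So do not try to push the argument through there. The paper does not address this case either; its proof tacitly uses two distinct periodic orbits. The point is that in the paper's sole application of this lemma---to a Devaney chaotic subsystem arising from positive entropy---periodic points are dense, so two distinct periodic orbits are always available and your ``generic case'' is all that is needed. You should either add the hypothesis of distinct orbits to the statement, or note that the application supplies it, and drop the attempted treatment of the residual case.
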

\begin{proof}
(1) Without loss of generality, we assume that there are two fixed points
$p_1$ and $p_2$,  since for every $n\geq 1$ $\langle x_1,x_2\rangle\in X^2$ is
proximal for $T$ iff so is for $T^n$. Then it is easy to see that
for every transitive point $x\in X$, $\langle x,p_1\rangle$ and $\langle
x, p_2\rangle$ are proximal, but $\langle p_1, p_2\rangle$ can not
be proximal.

(2)  Let $S$ and $R$  be the maximal scrambled sets which contain
$\{x, p_1\}$ and $\{x, p_2\}$  respectively. Clearly, $S$ and $R$
are not separable. Next, we show that $S$ and $R$ also are not
equivalent. If there exists a bijection $\phi: S\to R$ such that
$$\lim_{n\to \infty}d(f^n(y)- f^n(\phi(y)))=0\quad \text{for any }y\in S, $$
then $\langle p_1, \phi(p_1)\rangle$ is asymptotic. Thus, $\langle
p_1, p_2 \rangle$ is  proximal since $\langle p_2, \phi(p_1)
\rangle$ is proximal. This is a contradiction.
\end{proof}

Now we state the main result of this paper: there are various
equivalent conditions of  positive entropy which may relate to the
number ``$3$".

\begin{thm}
Let  $f\in C(I,I)$. Then the following conditions are equivalent:
\begin{enumerate}
\item $h(f)>0$;
\item $Prox_f$ is not an equivalence relation;
\item there exist two maximal scrambled sets which are neither equivalent nor separable;
\item there exists some $3$-scrambled tuple;
\item there exists some $3$-sensitive transitive subsystem;
\item there exists some essential $3$-IN-tuple.
\end{enumerate}
\end{thm}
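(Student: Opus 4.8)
The plan is to treat condition (1) as a hub: I will prove $(1)\Rightarrow(k)$ for $k=2,3,4,5,6$ and, conversely, $(k)\Rightarrow(1)$ for each $k$, where most of the reverse implications are immediate contrapositives of the zero-entropy results already established. Indeed, $\neg(1)$ means $h(f)=0$, and then: $Prox_f$ is an equivalence relation by Theorem \ref{intevla-proximal-equivalence}, giving $(2)\Rightarrow(1)$; any two maximal scrambled sets are equivalent or separable by Proposition \ref{interval-scr-eq-sep}, giving $(3)\Rightarrow(1)$; there is no $3$-scrambled tuple by Theorem \ref{interval-3-srca-tuple}, giving $(4)\Rightarrow(1)$; and there is no essential $3$-IN-tuple by Theorem \ref{interval-3-seq-ent-pair}, giving $(6)\Rightarrow(1)$. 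Each of these is a one-line contrapositive.

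For the forward implications I would exploit Theorem \ref{interval-postive-entropy-eq1}: $h(f)>0$ furnishes a Devaney chaotic subsystem $(Y,f|_Y)$, which must be infinite (a finite transitive system is a single periodic orbit, of zero entropy) and hence carries a transitive point $x$ and, among its dense periodic points, two distinct ones $p_1,p_2$ which we may take to be fixed after passing to a power. For $(1)\Rightarrow(2)$ I would apply Lemma \ref{tran-sys-lemma}(1) to $(Y,f|_Y)$: since proximality is intrinsic to orbits, the witnesses lie in $Y\subset I$ and show $Prox_f$ itself to be non-transitive. For $(1)\Rightarrow(3)$ I would rerun the argument of Lemma \ref{tran-sys-lemma}(2) over $I$: taking maximal scrambled sets $S\supset\{x,p_1\}$ and $R\supset\{x,p_2\}$ of $f$ on $I$, they are distinct (neither can contain both $p_1$ and $p_2$, else $\langle p_1,p_2\rangle$ would be proximal), non-separable (both contain $x$), and non-equivalent (an equivalence would force $\langle p_1,p_2\rangle$ asymptotic, hence proximal). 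For $(1)\Rightarrow(4)$ and $(1)\Rightarrow(5)$ I would quote Theorem \ref{Devaney-chaos-thm}: $(Y,f|_Y)$ is infinitely Li-Yorke chaotic, so any essential triple from its scrambled set is a $3$-scrambled tuple, and it is infinitely sensitive, so being also transitive it is a $3$-sensitive transitive subsystem.

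Two implications require more than a quotation. For $(1)\Rightarrow(6)$ I would instead use Theorem \ref{interval-postive-entropy-eq1} to obtain $n$ with $f^n$ possessing a strongly mixing subsystem $Z$; since a nontrivial weakly mixing system has arbitrarily long independence sets for every tuple of nonempty open sets, $Z$ has an essential $3$-IN-tuple for $f^n$, and replacing an $f^n$-independence set $J$ by $nJ=\{nj:j\in J\}$ converts this into an essential $3$-IN-tuple for $f$. The main obstacle is $(5)\Rightarrow(1)$, i.e.\ that a zero-entropy interval map admits no $3$-sensitive transitive subsystem. Here I would argue by the structure theory: an infinite transitive subsystem $(Y,f|_Y)$ is, by Theorems \ref{interval-adding-machine} and \ref{interval-structure-of-limit-set}, an almost one-to-one extension of an adding machine whose only nontrivial fibres are the $f$-nonseparable endpoint pairs, and these are asymptotic pairs by Theorems \ref{interval-IT-IN-eq} and \ref{interval-seq-ent-pair-str}; since the base is an isometric odometer and, by Lemma \ref{interval-Ruette-lemma}, any interval meeting three points of the $\omega$-limit set contains a periodic point, at most two orbits can remain uniformly separated, contradicting $3$-sensitivity.

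I expect this last step to be the delicate heart of the argument. The tension is that ``agreement to high level'' in the odometer does force the three separated iterates into a single interval $J_m^{i}$ of the periodic portion, but such an interval can still have diameter $\ge\delta$ because of a long nonseparable gap. The plan is to use that such a gap meets $Y$ in exactly its two endpoints, together with Lemma \ref{interval-Ruette-lemma}(1), to rule out three mutually $\delta$-separated points of $Y$ persisting as $U$ shrinks. Should the structural route prove awkward to make uniform, a fallback is a Mycielski-type extraction of a $3$-scrambled tuple from any $3$-sensitive transitive subsystem, which would reduce $(5)\Rightarrow(1)$ directly to Theorem \ref{interval-3-srca-tuple}.
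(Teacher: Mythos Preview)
Your hub-and-spoke strategy and all the contrapositive reductions $(k)\Rightarrow(1)$ match the paper exactly, as do the forward implications $(1)\Rightarrow(2),(3),(4),(5)$ via the Devaney subsystem and Theorem~\ref{Devaney-chaos-thm}. The only substantive differences are in $(1)\Rightarrow(6)$ and $(5)\Rightarrow(1)$. For $(1)\Rightarrow(6)$ the paper simply cites \cite{HY06}, where it is shown directly that positive entropy yields an essential $3$-IN-tuple; your route through a strongly mixing subsystem of $f^n$ and the rescaling $J\mapsto nJ$ is a valid alternative. For $(5)\Rightarrow(1)$ the paper observes, via Theorem~\ref{interval-adding-machine}, that any infinite transitive subsystem of a zero-entropy interval map is an at most $2$-to-$1$ extension of an adding machine, and then invokes \cite{SYZ08} for the general fact that such an extension of an equicontinuous system cannot be $3$-sensitive---a one-line citation that replaces the delicate structural analysis you sketch. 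Your direct argument is pointed in the right direction (three $\delta$-separated iterates would have to land in a single fibre of the odometer factor, which has at most two points), but the clean statement you need is exactly the result of \cite{SYZ08}; your Mycielski fallback would also work but is unnecessary once that reference is in hand.
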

\begin{proof}
(1)$\Rightarrow$(2) By Theorem \ref{interval-postive-entropy-eq1}(2) and Lemma \ref{tran-sys-lemma}(1).

(2)$\Rightarrow$(1) By Theorem \ref{intevla-proximal-equivalence}.

(1)$\Rightarrow$(3) By Theorem \ref{interval-postive-entropy-eq1}(2) and Lemma \ref{tran-sys-lemma}(2).

(3)$\Rightarrow$(1) By Theorem \ref{interval-scr-eq-sep}.

(1)$\Rightarrow$(4) By Theorem \ref{interval-postive-entropy-eq1}(2) and Theorem \ref{Devaney-chaos-thm}(2).

(4)$\Rightarrow$(1) By Theorem \ref{interval-3-srca-tuple}.

(1)$\Rightarrow$(5) By Theorem \ref{interval-postive-entropy-eq1}(2) and Theorem \ref{Devaney-chaos-thm}(1).

(5)$\Rightarrow$(1) If $h(f)=0$, by Theorem
\ref{interval-adding-machine}, any infinite  transitive subsystem is
at most $2$ to $1$ extension of the adding machine, then it is not
$3$-sensitive \cite{SYZ08}.

(1)$\Rightarrow$ (6) In \cite{HY06}, it shows that if $h(f)>0$, then there exists some essential $3$-IN-tuple.

(6)$\Rightarrow$(1) By Theorem \ref{interval-3-seq-ent-pair}.
\end{proof}

\begin{thm}\cite{Block1992}
Let $f\in C(I,I)$. If $f$ is transitive, then exactly one of the following alternatives holds:
\begin{enumerate}
\item $f$ is strongly mixing,
\item there exists a fixed point $c\in (0,1)$ such that $f([0,c])=[c,1]$, $f([c,1])=[0,c]$
and both $f^2|_{[0,c]}$ and $f^2|_{[c,1]}$ are strongly mixing.
\end{enumerate}
\end{thm}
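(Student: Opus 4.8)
The plan is to isolate the one genuinely hard lemma and reduce everything else to elementary observations. Throughout I would use two standard facts about a transitive interval map $f$: it is surjective (the tail $\{f^n(x):n\ge 1\}$ of a dense orbit lies in the closed set $f(I)$, forcing $f(I)=I$) and it has dense periodic points. First I would settle that the two alternatives exclude each other. If (2) holds, pick disjoint nonempty open $U,V\subseteq(0,c)$; since $f$ interchanges $[0,c]$ and $[c,1]$, one has $f^n(U)\subseteq[c,1]$ for every odd $n$, so $f^n(U)\cap V=\emptyset$ for all odd $n$, contradicting strong mixing. Hence at most one alternative can occur, and ``exactly one'' will follow once existence is shown.

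For existence I would split on whether $f^2$ is transitive, with the whole weight resting on the following lemma, which I expect to be the main obstacle: \emph{a transitive interval map whose square is transitive is strongly mixing}. I would prove it through the covering statement that, under transitivity of $f^2$, every nondegenerate closed subinterval $J$ satisfies $f^m(J)=I$ for some $m$. By surjectivity this gives $f^n(J)=I$ for all $n\ge m$, and since $f^n(U)$ is an interval dense in $f^n(\overline U)=I$ it meets any open $V$ once $n\ge m$, which is strong mixing. The covering statement is exactly where $f^2$-transitivity is indispensable: using dense periodicity one locates a fixed point across which short intervals expand to both sides, whereas the period-two block that would trap all images in a single half is precisely what $f^2$-transitivity forbids. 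Granting the lemma, the case where $f^2$ is transitive yields alternative (1) at once.

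In the remaining case $f^2$ is not transitive. Fixing a transitive point $x$, I would set $A=\overline{\{f^{2n}(x):n\ge0\}}$ and $B=\overline{\{f^{2n+1}(x):n\ge0\}}$; then $A\cup B=I$, $f(A)=B$, $f(B)=A$, and $f^2$ is transitive on each of $A,B$. Non-transitivity of $f^2$ makes $A,B$ proper, so (covering the connected space $I$) they meet in the forward-invariant, nowhere-dense set $A\cap B$. The complement $I\setminus(A\cap B)$ then splits into maximal subintervals, each lying in $A$ or in $B$ and permuted by $f$ (which sends $A$-pieces to $B$-pieces). Transitivity forces this permutation to be a single cycle; dense periodicity forces it to be finite (a periodic point interior to a piece makes that piece periodic under the permutation, which an infinite cycle forbids); and an adjacency lemma then forces exactly two pieces. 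That lemma reads: a continuous interval map can cyclically permute $k\ge 2$ spatially consecutive subintervals only when $k=2$ and it swaps them, because a shared endpoint of adjacent pieces must map to a shared endpoint, so the induced permutation is an automorphism of a path graph, hence the identity or the reversal, which is a single cycle only for $k=2$. This pins the decomposition to $A=[0,c]$, $B=[c,1]$, with $f(c)\in A\cap B=\{c\}$ an interior fixed point.

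Finally I would upgrade the halves. The map $f^2$ on $[0,c]$ is transitive, and $f^4$ is transitive there as well: a further splitting of $[0,c]$ under $f^2$ would, together with its mirror image on $[c,1]$, produce a genuine four-piece cycle of $f$ on $I$, which the adjacency lemma bars. Hence the main lemma applies to $f^2$ on each half and shows that $f^2|_{[0,c]}$ and $f^2|_{[c,1]}$ are strongly mixing, which is alternative (2). The hard part throughout is the single analytic lemma of the second paragraph (transitivity of the square forces the covering property $f^m(J)=I$); everything else is bookkeeping with connectedness, the orbit-closure splitting, and the finite adjacency count.
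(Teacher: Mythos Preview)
The paper does not give its own proof of this theorem: it is stated with the citation \cite{Block1992} and used as a black box, so there is no in-paper argument to compare your proposal against. What you have written is essentially the standard route one finds in Block--Coppel: split on whether $f^2$ is transitive, identify the key analytic lemma ``$f^2$ transitive $\Rightarrow$ $f$ strongly mixing'' (this is precisely the content of the theorem the paper cites immediately afterwards as Theorem~\ref{interval-strong-mixing}), and in the non-transitive case produce the two-interval swap via an orbit-closure decomposition together with a combinatorial adjacency argument.

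Two remarks on details. First, your covering statement ``$f^m(J)=I$ for some $m$'' is \emph{topological exactness}, which is a priori stronger than strong mixing; the version that is actually equivalent to strong mixing for interval maps (and is what the paper records as condition~(4) of Theorem~\ref{interval-strong-mixing}) is that for every $\varepsilon>0$ one eventually has $f^n(J)\supset[\varepsilon,1-\varepsilon]$. This weaker form already yields $f^n(U)\cap V\neq\emptyset$ for all large $n$, so your argument goes through once you relax the claim. Second, in the $A,B$ decomposition you should justify that $A$ and $B$ are themselves intervals (equivalently, that $A\cap B$ is a single point) before invoking the adjacency lemma; as written you pass from ``$A\cap B$ nowhere dense'' to ``finitely many pieces cyclically permuted'' a bit quickly, and that step uses interval-specific facts (connectedness plus the fixed-point structure) rather than pure combinatorics. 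None of this is a genuine obstruction---your plan is sound and matches the classical proof---but those are the two places where a referee would ask you to say more.
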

\begin{thm}\cite{Block1992}\label{interval-strong-mixing}
Let $f\in C(I,I)$. Then the following conditions are equivalent:
\begin{enumerate}
\item $f^2$ is transitive;
\item $f$ is weakly mixing;
\item $f$ is strongly mixing;
\item for any $\varepsilon>0$ and non degenerate subinterval $J\subset I$, there exists a $N>0$ such that
$$f^n(J)\supset [\varepsilon, 1-\varepsilon] \text{ for all } n\geq N.$$
\end{enumerate}
\end{thm}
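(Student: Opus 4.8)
The plan is to prove the cycle $(1)\Rightarrow(3)\Rightarrow(2)\Rightarrow(1)$ together with the equivalence $(3)\Leftrightarrow(4)$, so that every condition is pinned to strong mixing.

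For $(1)\Rightarrow(3)$ I would first observe that transitivity of $f^2$ forces transitivity of $f$, since any $n$ with $f^{2n}(U)\cap V\neq\emptyset$ already witnesses transitivity of $f$. Then I invoke the preceding dichotomy theorem for transitive interval maps: either $f$ is strongly mixing, or there is a fixed point $c\in(0,1)$ with $f([0,c])=[c,1]$ and $f([c,1])=[0,c]$. In the second case $f^2$ maps $[0,c]$ into itself, so $f^2$ leaves the nondegenerate set $[0,c]$ invariant and cannot be transitive on all of $I$. As $f^2$ is assumed transitive, this alternative is excluded and $f$ must be strongly mixing. The implications $(3)\Rightarrow(2)$ and $(2)\Rightarrow(1)$ are general facts in any TDS that I would record quickly: strong mixing of $f$ yields strong mixing of $f\times f$ by taking $n$ beyond the thresholds for both coordinate pairs, so $f\times f$ is transitive and $f$ is weakly mixing; and weak mixing implies every power $f^n$ is transitive, in particular $f^2$.

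The core geometric content is $(3)\Leftrightarrow(4)$, and here I would exploit that a continuous image of an interval is an interval. For $(3)\Rightarrow(4)$, fix $\varepsilon\in(0,1/2)$ and a nondegenerate $J$, so $\mathrm{int}\,J\neq\emptyset$. Applying strong mixing to $\mathrm{int}\,J$ against the open sets $(0,\varepsilon)$ and $(1-\varepsilon,1)$ produces an $N$ such that for all $n\geq N$ the connected set $f^n(J)$ meets both $(0,\varepsilon)$ and $(1-\varepsilon,1)$; being an interval, it therefore contains $[\varepsilon,1-\varepsilon]$. For $(4)\Rightarrow(3)$, given nonempty open $U,V$, shrink $U$ to a nondegenerate subinterval $J$ and choose $v\in V\cap(0,1)$ and $\varepsilon<\min\{v,1-v\}$; then $f^n(U)\supset f^n(J)\supset[\varepsilon,1-\varepsilon]\ni v$ for all large $n$, which is exactly strong mixing.

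The only substantive step is $(1)\Rightarrow(3)$, and it is entirely absorbed by the preceding structural dichotomy for transitive interval maps; the remaining steps are soft dynamical generalities and a one-line use of connectedness of $f^n(J)$, so I expect no genuine obstacle beyond citing the dichotomy correctly.
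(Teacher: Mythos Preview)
The paper does not prove this theorem; it is quoted from \cite{Block1992} without proof, so there is no in-paper argument to compare against. Your proposal is correct and is essentially the standard route: the key implication $(1)\Rightarrow(3)$ reduces at once to the dichotomy for transitive interval maps stated immediately above (also cited from \cite{Block1992}), the implications $(3)\Rightarrow(2)\Rightarrow(1)$ are general TDS facts, and the equivalence $(3)\Leftrightarrow(4)$ is the expected connectedness argument using that $f^n(J)$ is an interval.
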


\begin{thm}\label{interval-stro-mix-eq1}
Let $f\in C(I,I)$. Then the following conditions are equivalent:
\begin{enumerate}
\item $f$ is strongly mixing;
\item $\mathcal C(\mathcal U)>2$ for every open cover $\mathcal U$ which consists of
two non dense open sets, where $\mathcal C(\mathcal U)=\lim_{n\to \infty}N(\bigvee_{i=1}^{n-1}f^{-i}(\mathcal U))$;
\item $f$ is uniformly positive entropy;
\item $f$ is topological K system.
\end{enumerate}
\end{thm}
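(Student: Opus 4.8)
The plan is to prove the chain $(1)\Rightarrow(4)\Rightarrow(3)\Rightarrow(1)$, which makes $(1),(3),(4)$ equivalent, and then close the loop with $(1)\Rightarrow(2)$ and $(2)\Rightarrow(1)$. Two of these arrows are essentially free: $(4)\Rightarrow(3)$ holds by definition, since topological K asserts in particular that every essential \emph{pair} is an IE-tuple, which is exactly uniform positive entropy; and $(3)\Rightarrow(1)$ follows by combining the general fact that uniform positive entropy implies weak mixing with Theorem~\ref{interval-strong-mixing}, which identifies weak mixing and strong mixing for interval maps. The two arrows carrying real content are the interval-specific $(1)\Rightarrow(4)$ and the two directions involving the cover condition $(2)$.

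For $(1)\Rightarrow(4)$ I would use the covering characterization of strong mixing, Theorem~\ref{interval-strong-mixing}(4). Fix $k\geq 2$, an essential tuple $\langle x_1,\dots,x_k\rangle$, and a product neighborhood $U_1\times\cdots\times U_k$. Since the $x_j$ are distinct, I shrink each $U_j$ to a nondegenerate closed subinterval $\widetilde U_j\subset U_j\cap(0,1)$ with the $\widetilde U_j$ pairwise disjoint; an independence set for $(\widetilde U_1,\dots,\widetilde U_k)$ is then one for $(U_1,\dots,U_k)$. Choosing $\varepsilon>0$ so small that $\bigcup_l\widetilde U_l\subset[\varepsilon,1-\varepsilon]$, Theorem~\ref{interval-strong-mixing}(4) gives an $N$ with $f^n(\widetilde U_j)\supset\bigcup_l\widetilde U_l$ for every $j$ and every $n\geq N$. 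I then claim $J=\{N,2N,3N,\dots\}$ is an independence set: given a finite $I=\{i_1<\cdots<i_m\}\subset J$ (so consecutive gaps are $\geq N$) and any $s\in\{1,\dots,k\}^I$, the inclusions $f^{\,i_{t+1}-i_t}(\widetilde U_{s(i_t)})\supset\widetilde U_{s(i_{t+1})}$ let me build a nested chain of subintervals and, using surjectivity of $f$, produce a point $z$ with $f^{\,i_t}(z)\in\widetilde U_{s(i_t)}$ for all $t$; hence $\bigcap_{i\in I}f^{-i}\widetilde U_{s(i)}\neq\emptyset$. As $J$ has density $1/N>0$, the tuple is an IE-tuple, and since $k$ was arbitrary, $f$ is topological K.

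For $(1)\Rightarrow(2)$ I work with the disjoint closed sets $C_0=I\setminus U_0$, $C_1=I\setminus U_1$ (nonempty interiors, since $U_0,U_1$ are non-dense), and translate $\mathcal C(\mathcal U)>2$ into combinatorial independence. Weak mixing makes $f\times f\times f\times f$ transitive, so there is a lag $m\geq1$ with all four intersections $f^m(C_a)\cap C_b\neq\emptyset$ ($a,b\in\{0,1\}$); by surjectivity this gives a two-point independence set $\{i,i+m\}$ for $(C_0,C_1)$. A short combinatorial argument then shows no two itineraries can cover $I$: given proposed itineraries $a,b\in\{0,1\}^{\{1,\dots,n-1\}}$, reading off their values at $i,i+m$ and using the fourfold independence I produce a point lying in some $f^{-j}C_{a_j}$ and some $f^{-j'}C_{b_{j'}}$, i.e. escaping both. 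Hence $N(\bigvee_{l=1}^{n-1}f^{-l}\mathcal U)\geq 3$ once $n-1\geq i+m$, so $\mathcal C(\mathcal U)>2$.

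The last and hardest arrow is $(2)\Rightarrow(1)$, which I attack by contraposition: if $f$ is not strongly mixing I must exhibit \emph{one} standard cover with $\mathcal C(\mathcal U)\leq 2$. I split according to the structure supplied by Theorem~\ref{interval-strong-mixing} and the preceding dichotomy for transitive interval maps. If $f$ is not surjective, then $I\setminus f(I)$ is a nonempty open set into which I can place two disjoint closed intervals $C_0,C_1$ with nonempty interior; since $f^i(x)\in f(I)$ for all $i\geq1$, the refined cover $\bigvee_{i=1}^{n-1}f^{-i}\mathcal U$ is trivial and $\mathcal C(\mathcal U)=1$. If $f$ is surjective and transitive but not strongly mixing, the structure theorem gives a fixed point $c$ with $f([0,c])=[c,1]$ and $f([c,1])=[0,c]$; placing $C_0\subset\mathrm{int}[c,1]$, $C_1\subset\mathrm{int}[0,c]$ makes each half stay inside one cover element, so $[0,c]$ and $[c,1]$ each follow a single period-two itinerary and two itineraries cover $I$, giving $\mathcal C(\mathcal U)\leq 2$. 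The remaining case, $f$ surjective but not transitive, is the genuine obstacle: here I would invoke the reducibility of non-transitive interval maps to obtain a proper invariant (or periodic) subinterval, lump the pieces into a two-block partition that is either invariant or swapped, and argue exactly as in the transitive case. Carrying out this reduction rigorously, and confirming that the resulting two blocks can always host the distinguished sets so that only two itineraries survive, is the step I expect to require the most care and the most structural input from interval-map theory.
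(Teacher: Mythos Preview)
Your $(1)\Rightarrow(4)$ argument matches the paper's almost exactly, and your $(4)\Rightarrow(3)$ and $(3)\Rightarrow(1)$ reductions are fine. The real divergence, and the real gap, is in how you handle condition $(2)$.

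First, your separate implication $(1)\Rightarrow(2)$ is unnecessary work. The paper runs the cycle $(1)\Rightarrow(4)\Rightarrow(3)\Rightarrow(2)\Rightarrow(1)$, and $(3)\Rightarrow(2)$ is immediate: uniform positive entropy is equivalent to $h(f,\mathcal U)>0$ for every standard cover $\mathcal U$, which forces $N\bigl(\bigvee_{i=1}^{n-1}f^{-i}\mathcal U\bigr)\to\infty$ and hence $\mathcal C(\mathcal U)=\infty>2$. Your weak-mixing/independence argument for $(1)\Rightarrow(2)$ is correct but avoidable.

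Second, and more importantly, your contrapositive attack on $(2)\Rightarrow(1)$ genuinely stalls in the ``surjective but not transitive'' case, as you yourself flag. A non-transitive surjective interval map need not split into two tidy invariant or swapped blocks, and manufacturing a standard cover with $\mathcal C(\mathcal U)\leq 2$ from some unspecified ``reducibility'' is not straightforward. The paper sidesteps this case entirely by proving the \emph{direct} implication: it shows, as a general fact about any TDS (following Blanchard--Host--Maass), that $\mathcal C(\mathcal U)>2$ for all standard covers already forces transitivity. The argument is short: if $T$ were not transitive, one locates either disjoint nonempty open $U,V$ with $T^nU\cap V=\emptyset=T^nV\cap U$ for all $n\geq 1$, or a nonempty open $E$ with $E\cap T^{-n}E=\emptyset$ for all $n\geq 1$; in either case one builds a standard cover with $\mathcal C(\mathcal U)=2$. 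Once transitivity is secured, the dichotomy for transitive interval maps reduces everything to the single easy case you already handled (the fixed point $c$ swapping $[0,c]$ and $[c,1]$, with $\mathcal U=\{[0,c+\varepsilon),(c-\varepsilon,1]\}$). So the missing idea in your plan is precisely this general ``complexity $>2$ implies transitive'' lemma, which eliminates the troublesome non-transitive case rather than fighting it on interval-specific grounds.
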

\begin{proof}

(4)$\Rightarrow$(3) and (3)$\Rightarrow$(2) is trivial. 

(2)$\Rightarrow$(1). We first prove a claim which implies that $f$ is transitive.

{\bf Claim}: Let $(X,T)$ be a TDS. If $\mathcal C(\mathcal U)>2$ for
every open cover  $\mathcal U$ of $X$ which consists of two non
dense open sets, then $T$ is transitive.

{\bf Proof of the Claim: } We follow the idea in \cite{BHM00}. If
not, there exist two nonempty open  subsets $U,V$ of $X$ such that $T^nU\cap
V=\emptyset$ for all $n\geq 1$.

Case 1.  If there is some $n\geq 1$ such that $T^nV \cap U \neq
\emptyset$, let $m$ be the minimum of such values. Let $E=V\cap
T^{-m}U$, then $E\cap T^{-n}E=\emptyset$ for all $n\geq 1$. Choose
two closed  subsets $U_1$ and $V_1$  of $E$ and $T^{-1}E$
respectively which have non-empty interior. Let
$\mathcal{U}=\{U_1^c, V_1^c\}$, it is easy to verify that
$\mathcal{C}(\mathcal{U})=2$. This is a contraction.

Case 2. $T^n V\cap U=\emptyset$ for all $n\geq 1$. Choose two closed
subsets $U_1$ and $V_1$   of $U$ and $V$ respectively which have
non-empty interior. Let $\mathcal{U}=\{U_1^c, V_1^c\}$, it is easy
to verify that $\mathcal{C}(\mathcal{U})=2$. This also is a
contraction.

Thus, the proof of the Claim is complete.

So $f$ is transitive. Now assume that $f$ is not strongly mixing, then
there exists a fixed  point $c\in (0,1)$ such that $f([0,c])=[c,1]$,
$f([c,1])=[0,c]$. There exists $\varepsilon>0$ such that
$c+\varepsilon, c-\varepsilon\in (0,1)$. Let $\mathcal U=\{[0,
c+\varepsilon), (c-\varepsilon, 1]\}$. It is easy to verify that
$\mathcal C(\mathcal U)=2$. This also is a contradiction.

(1)$\Rightarrow$(4) To show that $f$ is topological K , it is
sufficient to show that every $k$-tuple of non-empty open  subsets
$(U_1, \ldots, U_k)$ has an independence set of positive density.

Since $f$ is strongly mixing, by Theorem \ref{interval-strong-mixing}(4),
there exist some $n\geq 1$ and nonempty
open subset $V_i\subset U_i$ for $1\leq i\leq k $ such that
$\bigcap_{i=1}^n f^n V_i \supset \bigcup_{i=1}^k V_i$. Then
$n\mathbb{N}=\{n,2n,3n,\ldots\}$ is an independence set for $(U_1,
\ldots, U_k)$ since
$$f^{-n} V_{s(1)}\cap f^{-2n}V_{s(2)}\cap\cdots \cap f^{-mn}V_{s(m)}\neq \emptyset$$
holds for all $m\geq 1$ and all $s\in\{1,2,\ldots k\}^m$.
\end{proof}

\begin{rem}
Recall a TDS $(X,T)$ is called to be of {\em completely positive entropy}
if each of its non-trivial factors has positive entropy.
In \cite{B92}, it showed that there exists a TDS which is of completely
positive entropy but not of uniformly positive entropy.  There are
also some examples for interval maps. For example, let $f(x)= 1/2 +
2x$ on $[0, 1/4]$, $3/2-2x$ on $[1/4, 1/2]$ and $1-x$ on $[1/2, 1]$.
It is easy to check that $f$ is of  completely positive entropy but
not of uniformly positive entropy. Moreover, $f^2$ is also of
completely positive entropy but not transitive.
\end{rem}
\section{Topological null system}

In \cite{HY09}, Huang and Ye introduced the notion of maximal
pattern entropy. For a TDS $(X,T)$, $n \in \mathbb{N}$ and a
finite open cover $\mathcal{U}$, let
$$p^*_{X,\mathcal{U}}(n) = \max_{(t_1<t_2<\cdots<t_n)\in\mathbb{Z}_+^n }N \left(\bigvee^n_{i=1} T^{-t_i}\mathcal{U}\right).$$

The maximal pattern entropy of $T$ with respect to $\mathcal{U}$ is
defined by
$$h^*_{top}(T,\mathcal{U}) = \lim_{n\to\infty} \frac{1}{n}\log p^*_{X,\mathcal{U}}(n).$$

The {\em maximal pattern entropy} of $(X,T)$ is
$$h^*_{top}(T)= \sup_{\mathcal{U}\in \mathcal{C}_X^o} h^*_{top}(T,\mathcal{U}).$$
Then a TDS $(X,T)$ is null iff $h^*_{top}(T)=0$, since $h^*_{top}(T)=\sup_A h_A(T)$.

In \cite{HY09}, Huang and Ye proved that for a null TDS
defined on a zero dimensional space, $p^*_{X,\mathcal{U}}(n)$ is of
polynomial order for each open cove $\mathcal{U}$ of $X$. They also

{\bf Conjecture:} If a TDS $(X,T)$ is null, then it is true that
$p^*_{X,\mathcal{U}}(n)$ is of polynomial order for each open cover
$\mathcal{U}$ of $X$.

In the following section, we prove that the conjecture holds
for interval maps. Before doing this, we need some lemmas. Let $\omega(f)=\bigcup\{\omega_f(x):x\in I\}$.

\begin{lem} \cite{FS91} \label{Smital-lemma} Let $f\in C(I,I)$. If $f$ is null, then for every
$\varepsilon > 0$ there are points $x_1, x_2, \ldots, x_k \in
\omega(f)$ and an open set $U \supset \omega(f)$ with the following property: if
$$f^j(x)\in U \quad for  \quad 0\leq j \leq r ,$$
then there exists some $i\in\{1,2,\ldots, k\}$ such that for any $j$ with $0\leq j \leq r$,
$$|f^j(x)-f^j(x_i)|<\varepsilon.$$
\end{lem}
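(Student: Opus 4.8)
The plan is to turn nullness into rigid structural information about the $\omega$-limit sets, extract from it an equicontinuity (shadowing) statement near $\omega(f)$, and then produce the finite family $x_1,\dots,x_k$ by a compactness/counting argument. First I would record the two facts nullness buys us. Since $h(f)\le\sup_A h_A(f)=0$, a null map has zero entropy; by Theorem \ref{interval-null-vs-chaos} it is not Li-Yorke chaotic, so by Theorem \ref{interval-LY-chaos} it admits no $f$-nonseparable pair. The structural consequence I would exploit is this: if $Y=\omega_f(x)$ is infinite, with periodic portion $(J_k^i)$, then every nested intersection $K=\bigcap_k J_k^{i_k}$ must be a single point. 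Indeed, in the remaining alternatives of Theorem \ref{interval-structure-of-limit-set} one has $K=[y,z]$ nondegenerate with $\{y,z\}\subset Y$ and $(y,z)\cap \mathrm{Per}(f)=\emptyset$, so by Proposition \ref{interval-f-nonsep-eq} the pair $\langle y,z\rangle$ would be $f$-nonseparable, contradicting nullness. Hence by Theorem \ref{interval-adding-machine} each such $(Y,f)$ is minimal and conjugate to an adding machine, and a K\"onig-type argument on the nested intervals yields $\max_i \mathrm{diam}(J_k^i)\to 0$ as $k\to\infty$; that is, $f$ is equicontinuous along each infinite $\omega$-limit set.

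Next I would promote this to uniform shadowing near $\omega(f)$ using the thickened periodic portions $(J_{k,s_k}^i)$ of Remark \ref{interval-periodic-portion-3}. Given $\varepsilon>0$, for each infinite maximal $\omega$-limit set I would fix a level $k$ with $\mathrm{diam}(J_{k,s_k}^i)<\varepsilon$ for all $i$ and take $U$ to be the union of these pairwise separated thickened cells together with small neighborhoods of those $\omega$-limit sets that are periodic orbits. The crucial point is that a test point $x$ whose segment $x,f(x),\dots,f^r(x)$ remains in $U$ is \emph{forced} to follow a single cell itinerary $i\mapsto i+1\ (\mathrm{mod}\ 2^k)$: this is exactly the argument of Lemma \ref{interval-omega-lim-lem}, using the covering relation $f(J_k^i)\supset J_k^{i+1(\mathrm{mod}\,2^k)}$ together with the separation $d(J_{k,s_k}^i,J_{k,s_k}^{i+1})>s_k$. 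The very same itinerary is realized by a genuine point of $\omega(f)$, so at every time $0\le j\le r$ the two orbits sit in a common cell of diameter $<\varepsilon$, giving the desired $\varepsilon$-closeness.

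Finally I would produce the finite set of templates. At resolution $\varepsilon$ only boundedly many coarse itineraries can occur: once $U$ is fixed, the visited cells are pairwise disjoint subintervals of $[0,1]$ of length $<\varepsilon$ and the dynamics permutes them cyclically, so an admissible segment is determined up to $\varepsilon$ by its starting cell, and there are only finitely many such cells in $U$. Choosing one representative $x_i\in\omega(f)$ realizing each starting cell produces $x_1,\dots,x_k\in\omega(f)$, and by the previous paragraph every admissible $x$ shadows the corresponding $x_i$ to within $\varepsilon$ for $0\le j\le r$.

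The hard part will be the uniformity: a priori there may be infinitely many infinite $\omega$-limit sets, and one must guarantee that a single open set $U$ and a single finite collection of templates work across all of them. This is precisely where nullness, rather than mere zero entropy, is indispensable, since it both eliminates the nondegenerate intersections $K$ (which would otherwise create $f$-nonseparable, hence scrambled, pairs and ruin equicontinuity) and, through the counting of disjoint $\varepsilon$-cells in $[0,1]$, bounds the number of templates. A secondary point requiring care is the behavior at the periodic (finite $\omega$-limit set) pieces, where one must check that a segment trapped in $U$ for $0\le j\le r$ genuinely tracks the corresponding periodic template; here the finiteness of $r$ and the smallness of $U$ are what make the argument go through.
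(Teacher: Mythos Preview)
The paper does not prove this lemma at all: it is simply quoted from \cite{FS91}, so there is no ``paper's own proof'' to compare against. Your task is therefore to supply an argument the paper omits.

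Your outline has the right structural ingredients (nullness $\Rightarrow$ no $f$-nonseparable pair $\Rightarrow$ each infinite $\omega$-limit set is an adding machine with shrinking cells), but the step you yourself flag as the hard part---uniformity across all $\omega$-limit sets---is not actually carried out, and the sketch you give for it is incorrect. You write that ``the visited cells are pairwise disjoint subintervals of $[0,1]$ of length $<\varepsilon$ \dots\ and there are only finitely many such cells in $U$.'' Smallness of diameter gives no upper bound on the number of pairwise disjoint subintervals of $[0,1]$; you would need a uniform \emph{lower} bound on their lengths, which you do not have. A zero-entropy (even null) interval map can have infinitely many distinct infinite maximal $\omega$-limit sets, and for each one the level $k$ needed to make all cells of diameter $<\varepsilon$ may be different; your construction then produces infinitely many cells and infinitely many ``templates,'' so you never get a finite list $x_1,\dots,x_k$. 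Nothing in your argument explains why nullness, as opposed to mere absence of $f$-nonseparable pairs, cuts this down to finitely many; that is exactly the content you must supply.

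A secondary issue: invoking ``exactly the argument of Lemma \ref{interval-omega-lim-lem}'' to force the cell itinerary on a \emph{finite} orbit segment does not work as written. The proof of that lemma proceeds by extracting a convergent subsequence from infinitely many violations; for a finite segment $0\le j\le r$ there is no limit to take. You can repair this locally by shrinking $U$ further (choose neighborhoods $V_i$ of $\omega_0\cap J_k^i$ with $f(V_i)\subset J_{k,s_k}^{i+1}$ by continuity, and let $U$ be $\bigcup_i V_i$), but this again has to be done uniformly over all $\omega$-limit sets, which loops back to the unresolved finiteness problem above.
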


\begin{lem}\label{Sarkovskii-lemma} {\rm (\cite{Sh67} or \cite[Corollary IV.13]{Block1992})}
Let $f\in C(I,I)$. Then for any neighborhood $U$ of
$\omega(f)$ there is an integer $q>0$ such that the number of points
of an arbitrary trajectory lying outside $U$ is less than $q$.
\end{lem}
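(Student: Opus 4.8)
The plan is to reformulate the statement in terms of visit counts. Write $K := I \setminus U$, which is compact, and note that the hypothesis $U \supseteq \omega(f)$ means $K \cap \omega(f) = \emptyset$. For $x \in I$ set $V(x) = \#\{n \geq 0 : f^n(x) \in K\}$, the number of iterates of $x$ lying outside $U$; the goal is to produce $q$ with $V(x) < q$ for all $x$. First I would record two easy facts. Each $V(x)$ is finite: if the orbit of $x$ met $K$ infinitely often, compactness of $K$ would furnish an accumulation point $z \in K$, and then $z \in \omega_f(x) \subseteq \omega(f)$, contradicting $K \cap \omega(f) = \emptyset$. Moreover $V(x) = \mathbf{1}_K(x) + V(f(x))$, so $V$ is non-increasing along orbits; since the visits of the orbit of $x$ to $K$ coincide with the visits of its first entry point (which lies in $K$), it suffices to bound $V$ on $K$. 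The whole difficulty is thus to pass from ``finite for each $x$'' to ``uniformly bounded.''

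My main reduction is to localize. I claim that for each $c \in K$ there is a neighborhood $W_c$ of $c$ and an integer $q_c$ such that every orbit meets $W_c$ at most $q_c$ times. Granting this, compactness of $K$ lets me cover it by finitely many $W_{c_1}, \ldots, W_{c_\ell}$, and then every orbit meets $K$ at most $q_{c_1} + \cdots + q_{c_\ell}$ times, which is the desired uniform bound. To prove the local claim I argue by contradiction: if it fails for $c$, then for each $k$ there is a point $y_k$ whose orbit meets the ball $W_k := B(c, 1/k)$ at times $s_1 < \cdots < s_k$. Taking $z_k := f^{s_1}(y_k)$ as a new base point, the orbit of $z_k$ meets $W_k$ at the $k$ times $s_i - s_1$, so $z_k \in W_k$ (hence $z_k \to c$) and the orbit of $z_k$ returns into $W_k$ at least $k-1$ times at positive instants. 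If, along some subsequence, the first return time $r_k := s_2 - s_1$ takes a constant value $\rho$, then $f^\rho(z_k) \in W_k \to c$ while $z_k \to c$, so by continuity $f^\rho(c) = c$; thus $c$ is periodic and $c \in Per(f) \subseteq \omega(f)$, contradicting $c \in K$.

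The remaining case, where the return times $r_k \to \infty$, is the crux and is the genuinely one-dimensional part of the argument: a priori an orbit could make longer and longer excursions away from $c$ and still keep returning arbitrarily close to $c$, and indeed for a general compact system uniformity of the visit count can fail, so the order structure of the interval must be used here. The plan is to exploit the covering principle that if a closed interval $J$ satisfies $f^m(J) \supseteq J$ then $J$ contains a fixed point of $f^m$: applied to small intervals near $c$ spanned by $z_k$ and $f^{r_k}(z_k)$, one would like to produce periodic points of $f^m$ accumulating at $c$ (again forcing $c \in \overline{Per(f)} \subseteq \omega(f)$), or else to extract from the long returning excursion an honest $\omega$-limit point at $c$ via the eventual monotone behavior of $f$ on the wandering intervals traversed by the excursion. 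I expect this control of unbounded return times to be the main obstacle, since the covering $f^{r_k}(J_k) \supseteq J_k$ is not automatic from $z_k, f^{r_k}(z_k) \in W_k$ alone; it is precisely the content supplied by Sharkovskii's analysis of $\omega$-limit sets and by the Block--Coppel structure theory of $\Omega(f)$ and $\omega(f)$, and for the blow-up case I would lean on that theory (as the statement is in any event attributed to it) to reach the contradiction and conclude.
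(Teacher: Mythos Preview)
The paper does not prove this lemma at all: it is stated with a bare citation to Sharkovskii and to Block--Coppel, and is then used as a black box in the proof of Theorem~\ref{interval-poloynomial-order}. So there is no ``paper's own proof'' to compare against; the question is simply whether your proposal stands on its own.

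The reduction you give is sound: setting $K=I\setminus U$, showing each $V(x)$ is finite, and then localizing via compactness of $K$ is a clean way to organize the argument, and your treatment of the bounded-return-time case is correct. The genuine gap is exactly where you say it is. In the case $r_k\to\infty$ you do not actually produce a contradiction: you observe that the covering inclusion $f^{r_k}(J_k)\supseteq J_k$ needed to manufacture nearby periodic points is not automatic from $z_k,f^{r_k}(z_k)\in W_k$, and then you propose to ``lean on'' the Sharkovskii/Block--Coppel structure theory. But that theory \emph{is} the lemma you are trying to prove; invoking it here is circular, and without it the argument simply stops. This is not a cosmetic omission: the unbounded-return case is precisely the one-dimensional heart of the result (it is false for general compact systems, as you note), and the standard proofs use nontrivial facts about the ordering of orbits on the interval---for instance, that a point outside $\omega(f)$ has a one-sided neighborhood that is eventually mapped off itself, or the characterization of $\omega(f)$ via eventual monotone convergence on complementary intervals. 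Your sketch does not supply any such ingredient, so as written the proposal is incomplete at the decisive step.
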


\begin{thm} \label{interval-poloynomial-order}
Let $f\in C(I,I)$. If $f$ is null, then $p^*_{I,\mathcal{U}}(n)$ is of polynomial order for each open
cover $\mathcal{U}$ of $I$.
\end{thm}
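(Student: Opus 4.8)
The plan is to bound, uniformly in the times $t_1<\cdots<t_n$, the number of atoms of $\bigvee_{i=1}^n f^{-t_i}\mathcal U$ needed to cover $I$, and to show this number grows only polynomially in $n$. First I would fix $\mathcal U$ with $m=|\mathcal U|$, let $\delta>0$ be a Lebesgue number and choose $\varepsilon>0$ with $3\varepsilon<\delta$, so that any subset of $I$ of diameter less than $3\varepsilon$ lies in a single member of $\mathcal U$. Applying Lemma \ref{Smital-lemma} with this $\varepsilon$ yields reference points $x_1,\dots,x_k\in\omega(f)$ and a neighborhood $U\supset\omega(f)$, and applying Lemma \ref{Sarkovskii-lemma} to $U$ yields $q>0$ bounding the number of points of any trajectory lying outside $U$. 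The crucial structural input is that, since $f$ is null, Theorem \ref{interval-null-vs-chaos} and Theorem \ref{interval-LY-chaos} give no $f$-nonseparable pair, so by Theorem \ref{interval-structure-of-limit-set} and Theorem \ref{interval-adding-machine} every infinite $\omega$-limit set is a minimal adding machine. Hence each $x_c$ lies in a subsystem $Y_c=\overline{\{f^j(x_c):j\ge 0\}}$ that is either a periodic orbit or an odometer, and in both cases $Y_c$ is a zero-dimensional null system, so by the zero-dimensional case of the conjecture \cite{HY09} its maximal pattern function $p^*_{Y_c,\mathcal B}$ is of polynomial order for every finite open cover $\mathcal B$.

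Next I would decompose an arbitrary orbit. Fix $x\in I$. By Lemma \ref{Sarkovskii-lemma} fewer than $q$ of the times $t_i$ satisfy $f^{t_i}(x)\notin U$; call these the bad indices $S$. The excursion times split $\mathbb Z_+$ into at most $q+1$ maximal runs on which the orbit stays in $U$, and by Lemma \ref{Smital-lemma} the orbit on a run beginning at time $a$ is $\varepsilon$-shadowed by some reference orbit, so a good time $t_i$ in that run satisfies $|f^{t_i}(x)-f^{t_i-a}(x_c)|<\varepsilon$. The good indices are thereby partitioned into at most $q+1$ consecutive blocks, one per run. I then define the \emph{coarse data} of $x$ to consist of: the set $S$ and, for $i\in S$, the first member of $\mathcal U$ containing $f^{t_i}(x)$; the partition of the good indices into $\le q+1$ blocks; the reference index $c$ of each block; and, for each block, the pattern of its reference points $(f^{t_i-a}(x_c))_i$ recorded up to a fixed cover $\mathcal B$ of $Y_c$ by $\varepsilon$-balls.

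The key claim is that two points with the same coarse data lie in a common atom. Indeed, at a bad index both points lie in the recorded member of $\mathcal U$, while at a good index each lies within $\varepsilon$ of a reference point and the two reference points are within $\varepsilon$ of each other, so $|f^{t_i}(x)-f^{t_i}(y)|<3\varepsilon<\delta$ and the pair shares a member of $\mathcal U$; choosing one common member per coordinate exhibits a single atom containing both. Consequently $N(\bigvee_{i=1}^n f^{-t_i}\mathcal U)$ is at most the number of coarse-data values, which I would bound by $(n+1)^{q}\,m^{q}$ for the bad part, $(n+1)^{q}$ for the block partition, $k^{q+1}$ for the reference indices, and $\bigl(\max_c p^*_{Y_c,\mathcal B}(n)\bigr)^{q+1}$ for the reference patterns. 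Each factor is polynomial in $n$, the last by the zero-dimensional result, so the product is a polynomial bound; since it is independent of $t_1<\cdots<t_n$, it bounds $p^*_{I,\mathcal U}(n)$, which is therefore of polynomial order.

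The hard part will be the alignment problem. Because $p^*$ is a supremum over all times, the run-start $a$ is a priori unbounded, so a naive count over shifts is infinite; the essential move is to replace the shift by the far coarser datum of the reference pattern recorded up to $\mathcal B$, and to know this datum takes only polynomially many values. This is exactly where the minimality (odometer) structure of infinite $\omega$-limit sets for null interval maps, combined with the zero-dimensional case of Huang and Ye, becomes indispensable. A secondary but necessary care is the Lebesgue-number bookkeeping ensuring that $\varepsilon$-shadowing genuinely forces two points into a common member of $\mathcal U$.
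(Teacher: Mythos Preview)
Your approach is correct but considerably more elaborate than the paper's. The paper follows Franzov\'a--Sm\'{\i}tal directly: on each maximal run $M(t_i)$ in $U$ it records only the single reference index $r\in\{1,\dots,k\}$ supplied by Lemma~\ref{Smital-lemma}, and each bad time is coded by which cell of a fixed $\varepsilon$-partition $\{K_1,\dots,K_s\}$ of $I\setminus U$ it lands in; since any itinerary then consists of at most $2q+1$ constant blocks over the alphabet $\{1,\dots,k,K_1,\dots,K_s\}$, one gets the explicit bound $p^*_{I,\mathcal U}(n)\le (k+s)^{2q+1}n^{q}$. In particular the paper invokes neither the odometer structure of infinite $\omega$-limit sets nor the zero-dimensional case of the Huang--Ye result: Smital's lemma is used to give shadowing $|f^k(x)-f^k(x_r)|<\varepsilon$ at the \emph{same} absolute times $k\in M(t_i)$, so the implication ``same code $\Rightarrow$ $2\varepsilon$-close at every $t_i$'' is immediate and your alignment problem never arises. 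Your route instead treats the shadowing as shifted and compensates by recording the $\mathcal B$-pattern of the reference orbit on each block, then appeals to the zero-dimensional theorem to keep the pattern count polynomial; this is heavier machinery but makes the alignment bookkeeping fully explicit. One small technical point: to guarantee both that equal $\mathcal B$-patterns force the reference points within $\varepsilon$ and that the pattern count is genuinely bounded by $p^*_{Y_c,\mathcal B}(n)$, take $\mathcal B$ to be a clopen \emph{partition} of $Y_c$ (available since $Y_c$ is zero-dimensional) rather than an open cover by balls.
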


\begin{proof}
We follow the idea in \cite{FS91}. Let $\mathcal{U}$ be an open cover of $I$ with
Lebesgue number $\delta$ and $n \in \mathbb{N}$. For any
$\vec{t}=(t_1<t_2<\cdots<t_n)\in\mathbb{Z}_+^n$, it is
well known that
$$N\left(\bigvee^n_{i=1}T^{-t_i}\mathcal{U}\right)\leq S\left(\vec{t},f,\frac{\delta}{2}\right),$$
where $S\left(\vec{t},f,\frac{\delta}{2}\right)$ is the minimal
cardinality of $\left(\vec{t},f,\frac{\delta}{2}\right)$-spanning sets.
Recall that a set $E \subset I$ is called a $\left(\vec{t},f,\varepsilon\right)$-spanning set,
if for any $x \in I$, there exists some $y \in E$ such that
$|f^{t_i}(x)-f^{t_i}(y)|<\varepsilon$ for $1 \leq i \leq n$. Let
$\varepsilon=\frac{\delta}{4}$ and $U$ and $x_1,\ldots,x_k$ be as in
Lemma \ref{Smital-lemma}. Let $\{K_i\}_{i=1}^s$ be pairwise disjoint
set with diam$(K_i)<\varepsilon$ for any $i$, and $K_1\cup\cdots\cup
K_s=I\backslash U$. Assign to any $x\in I$ an itinerary
$\alpha_{\vec{t}}(x)=\{\alpha_{t_i}(x)\}_{i=1}^n$ such
that $\alpha_{t_i}(x)=K_j$ if $f^{t_i}(x)\in K_j$. If $f^{t_i}(x)\in
U$, let $M(t_i)$ be the maximal subinterval of the set of nonnegative
integers such that $t_i\in M(t_i)$ and $f^k(x)\in U $, for all $k
\in M(t_i)$. Then by Lemma \ref{Smital-lemma}, there exists some $r\in\{1,\ldots,k\}$
such that $|f^k(x)-f^k(x_r)|<\varepsilon<\frac{\delta}{2}$ for any
$k \in M(t_i)$. Put $\alpha_{t_j}(x)=r$ for any $t_j\in M(t_i)$.

It is easy to see that for any $x,y \in I$,
$\alpha_{\vec{t}}\,(x)=\alpha_{\vec{t}}\,(y)$
implies $|f^{t_i}(x)-f^{t_i}(y)|<2\varepsilon=\frac{\delta}{2}$ for
all $1\leq i \leq n$. So we get
$$S\left(\vec{t},f,\frac{\delta}{2}\right)\leq C\left(\vec{t}\,\right),$$
where $C\left(\vec{t}\,\right)$ is the number of all the possible
codes $\alpha_{\vec{t}}\,(x)$. By Lemma \ref{Sarkovskii-lemma},
there exists an integer $q>0$ such that the number of points of  an
arbitrary trajectory lying outside $U$ is less than $q$.
Consequently, every code $\alpha_{\vec{t}}\,(x)$ consists of at most
$2q+1$ blocks, and each block is formed by only one of the symbols
$1,\ldots,k,K_1,\ldots,K_s$ (with possible repetitions). Therefore,
$$N \left(\bigvee^n_{i=1} T^{-t_i}\mathcal{U}\right)\leq C\left(\vec{t}\,\right)\leq (k+s)^{2q+1}n^q.$$
We remark that the choice of $k,s$ and $q$ depend only on $\mathcal{U}$ but not $\vec{t}$. Thus,
$$p^*_{I,\mathcal{U}}(n) \leq (k+s)^{2q+1}n^q$$
for all $n\geq 1$
\end{proof}

\begin{rem}
Recall that a space $X$ is called a {\em tree} if it is a connected space
that is a union of finite  number of intervals, but does not contain
a subset homeomorphic to a circle. If one is acquainted  with the
dynamical properties of the tree maps, it is not hard to see that
Theorem  \ref{interval-stro-mix-eq1} and  Theorem
\ref{interval-poloynomial-order} hold for tree maps, but in
Theorem \ref{interval-stro-mix-eq1}(2), the number ``$\,2$" should
be replaced by a sufficient large integer which is associated with
the number of endpoints of the tree.
\end{rem}
\subsection*{Acknowledgement}
The author would like to thank Prof. Xiangdong Ye
for the careful reading and helpful suggestions.
This work was supported in part by the National Natural Science Foundation of China (Nos.\@ 11001071, 11071231).



\begin{thebibliography}{99}
\bibitem{BL95} F. Balibrea and V. J. Lopez, \textit{A charactization of chaotic functions with
entropy zero via their maximal scrambled sets}, Math. Bohemica, 120 (1995), no. 3, 293--298.

\bibitem{BBHS96} A. Block, A. M. Bruckner, P. D. Humer and J. Smital, \textit{The space of $\omega$-limit
sets of a continuous map of the interval}, Trans. Amer. Math. Soc., 384 (1996), no. 4, 1357--1372.

\bibitem{Block1992} L. S. Block and W. A. Copple, \textit{Dynamics in one dimemsion},
Lecture Notes in Mathematics, 1513, Springer-Verlag, 1992.

\bibitem{B92} F. Blanchard, \textit{Fully positive topological entropy and topological mixing}, Symbolic
Dynamics and its Applications, 135 (ContemporaryMathematics). Amer. Math. Soc., Providence, RI, 1992, 95--105.

\bibitem{BGKM02} F. Blanchard, E. Glasner, S. Kolyada and A. Maass, \textit{On Li-Yorke pairs},
J. Reine Angew. Math., 547 (2002), 51--68.

\bibitem{BHM00} F. Blanchard, B. Host and A. Maass, \textit{Topological complexity},
Ergod. Th. and Dynam. Sys., 20 (2000), 641--662.

\bibitem{D89} R. L. Devaney, \textit{An introduction to chaotic dynamical systems},
Addison-Wesley Publishing Company Advanced Book Program, RedwoodCity, CA, second edition, 1989.

\bibitem{FS91} N. Franzova and J. Smital, \textit{Positive sequence topological entropy characterizes
chaotic maps}, Proc. Amer. Math., 112 (1991), no. 4, 1083--1086.

\bibitem{G06} E. Glasner, \textit{On tame dynamical systems}, Colloq. Math., 105 (2006), 283--295.

\bibitem{GY09} E. Glasner and X. Ye, \textit{Local entropy theory}, Ergod. Th. and Dynam. Sys.,
29 (2009), no. 2, 321--356.

\bibitem{HY02}W. Huang and X. Ye, \textit{Devaney's chaos or 2-scattering implies Li-Yorke's chaos},
Topology Appl., 117 (2002), no. 3, 259--272.

\bibitem{HY06} W. Huang and X. Ye, \textit{A local variational relation and applications}.
Israel J. Math., 151 (2006), 237--280.

\bibitem{HY09} W. Huang and X. Ye, \textit{Combinatorial lemmas and applications to dynamics},
Adv. Math., 220 (2009), no. 6, 1689--1716.

\bibitem{JS86} K. Jankova and J. Smital, \textit{A characterization of chaos},
Bull. Austral. Math. Soc., 34 (1986), no. 2, 283--292.

\bibitem{KL07} D. Kerr and H. Li, \textit{Independence in topological and C*-dynamics},
 Math. Ann., 338 (2007), 869--926.

\bibitem{KS89} M. Kuchta and J. Smital, \textit{Two-point scrambled set implies chaos}, In European Conference
 on Iteration Theroy (Caldes De Malavella, 1987), World Sci. Publishing, Teaneck, NJ, 1989, 427--430.

\bibitem{L93} S. Li, \textit{$\omega$-chaos and topological entropy},
Trans. Amer. Math. Soc., 339 (1993), no. 1, 243--249.

\bibitem{LY75} T. Li and J. Yorke, \textit{Period three implies chaos},
Amer. Math. Monthly, 82 (1975), no. 10, 985--992.

\bibitem{SS94} B. Schweizer and J. Smital, \textit{Measure of chaos and a spectral decomposition of dynamical
systems on the interval}, Trans. Amer. Math. Soc., 344 (1994), 737--754.

\bibitem{Sh66}  A. N. Sharkovsky, \textit{The partially ordered system of attracting sets},
Soviet Math. Dokl. 7 (1966), no. 5, 1384--1386.

\bibitem{Ruette2002} S. Ruette, \textit{Chaos for continuous interval maps: a survey of relationship between
thevarious sorts of chaos}, 2003. Available on \url{http://www.math.u-psud.fr/~ruette/}.


\bibitem{SYZ08} S. Shao, X. Ye and R. Zhang, \textit{Sensitivity and regionally proximal relation in minimal
systems}, Sci. China Ser. A., 51 (2008), 987--994.

\bibitem{Sh67} A. N. Sharkovskii, \textit{On a theorem of G.Birkhoff},
Dopovidi Akad. Nauk Ukrain. RSR. Ser. A., 5 (1967), 429--423.

\bibitem{S86} J. Smital, \textit{Chaotic functions with zero topological entropy},
Trans. Amer. Math. Soc., 297 (1986), no. 1, 269--282.

\bibitem{TYZ09} F. Tan, X. Ye and R. Zhang, \textit{The set of sequence entropy for a given space},
Nonlinearity, 23 (2010), 159--178.

\bibitem{X86} J. Xiong, \textit{A chaotic map with topological entropy [zero]},
Acta Math. Sci. (English Ed.), 6 (1986), no. 4, 439--443.

\bibitem{X05} J. Xiong, \textit{Chaos in topological transitive systems},
Sci. China Ser. A., 48 (2005), 929--939.

\bibitem{XY92} J. Xiong and Z. Yang, \textit{Chaos caused by a toplogical mixing map},
Dynamical systems and related topics (Nagoya, 1990), 550--572,
Adv. Ser. Dynam. Systems, 9, World Sci. Publ., River Edge, NJ, 1991.

\bibitem{YZ08} X. Ye and R. Zhang, \textit{On sensitive sets in topological dynamics},
Nonlinearity,  21 (2008), no.~7, 1601--1620.
\end{thebibliography}
\end{document}